\newtheorem{thm}{Theorem}[section]
\newtheorem{cor}[thm]{Corollary}
\newtheorem{lem}[thm]{Lemma}
\newtheorem{prop}[thm]{Proposition}
\newtheorem{defn}{Definition}[section]
\newtheorem{rem}{\textbf{Remark}}[section]
\newcommand{\tf}[1]{Theorem~\ref{#1}}
\newcommand{\sef}[1]{Section~\ref{#1}}
\newcommand{\cf}[1]{Corollary~\ref{#1}}
\newtheorem{eg}{\textbf{Example}}[section]
\newtheorem{egs}{\textbf{Examples}}[section]
\newcommand{\nc}{\newcommand}
\nc{\BC}{{\mathbb C}} \nc{\BQ}{{\mathbb Q}} \nc{\BR}{{\mathbb R}}
\nc{\BZ}{{\mathbb Z}} \nc{\BP}{{\mathbb P}} \nc{\BN}{{\mathbb N}}
\nc{\BM}{{\mathbb M}} \nc{\fH}{{\mathcal{H}}} \nc{\vp}{{\varepsilon}}
\nc{\dpa}{{\partial}}\nc{\al}{{\alpha}} \nc{\bt}{{\beta}} \nc{\ga}{{\Gamma}}
\nc{\g}{{\gamma}} \nc{\gt}{{\widetilde{\Gamma}}}\nc{\q}{{\mathcal{Q}}(\ga)}\nc{\si}{\Sigma} \nc{\s}{\sigma}
\nc{\BS}{{\mathbb S}} \nc{\BH}{{\mathbb H}} \nc{\BU}{{\mathbb U}} \nc{\M}{{\mathcal{M}}}
\nc{\Hh}{{\widehat{\mathbb H}}} \nc{\Rr}{{\widehat{\mathbb R}}}
\nc{\mero}{{\mathcal{M}(\BH)}} \nc{\e}{{\mathcal{E}(\ga)}}
\nc{\eh}{{\mathcal{E}_h(\ga)}} \nc{\Cc}{\widehat{\BC}}
\nc{\PZ}{{\mbox{PSL}_2(\BZ)}} \nc{\SZ}{{\mbox{SL}_2(\BZ)}}
\nc{\SR}{{\mbox{SL}_2(\BR)}} \nc{\PR}{{\mbox{PSL}_2(\BR)}}
\nc{\GQ}{{\mbox{GL}_2^+(\BQ)}} \nc{\PQ}{{\mbox{PGL}_2^+(\BQ)}}
\nc{\GR}{{\mbox{GL}_2^+(\BR)}} \nc{\PGR}{{\mbox{PGL}_2^+(\BR)}}
\nc{\GC}{{\mbox{GL}_2(\BC)}} \nc{\La}{\Lambda(\ga)}
\nc{\SC}{{\mbox{SL}_2(\BC)}} \nc{\PC}{{\mbox{PSL}_2(\BC)}}
\nc{\ie}{\textit{ i.e.}} \nc{\ra}{\rightarrow}
\nc{\etc}{\textit{etc}.\@}
\nc{\mt}{M\"{o}bius transformation }
\nc{\mts}{M\"{o}bius transformations}
\nc{\enu}{enumerate}
\begin{document}

\title{Equivariant functions for the M\"{o}bius subgroups  and applications}
\author[]{Hicham Saber}
\address{Department of Mathematics and Statistics, University of Ottawa, Ottawa Ontario K1N 6N5 Canada}
\email{hsabe083@uottawa.ca}
\subjclass[2000]{11F12, 35Q15, 32L10}

\begin{abstract}
The aim of this paper is to give a generalization of the theory equivariant functions, initiated in \cite{AAS,ElbSeb}, to arbitrary  subgroups of $\PR$.
We show that there is a deep relation between the geometry of these groups and some analytic and algebraic properties of these functions.
As an application, we give a new proof of the classification of automorphic forms for non discrete groups. Also, we prove the following automorphy condition: If $f$ is an automorphic form for a Fuchsian group of the first kind $\ga$, then $f$ has infinitely many non $\ga$-equivalent critical points. \end{abstract}
\maketitle

\section{Introduction}

Let $\BH=\{z\in \BC\,,\  \mbox{Im}(z) > 0\}$ be the upper-half plane of the complex plane, and $\PR=\SR/\{\pm I\}$, where $\SR$ is the group of real $2\times2$ matrices with determinant one. The group $\PR$ acts on $\BH$ by \mts, \ie
$$
\g z= \frac{az+b}{cz+d},~  z\in {\mathbb H}\,,\g= \binom{a\ b}{c\ d}\,\in \SR.
$$

Let $\ga$ be a subgroup of $\PR$, $\mero$ be the set of all meromorphic functions  on $\BH$. Then we have an action of $\ga$ on $\mero$ given by
$$ \g*h=\g^{-1}h\g.
$$
The set of fixed points of this action is
$$\e=\{h\in \mero, \; \g*h=h\ \; \forall \g\in \ga\}=\{h\in \mero, \; \g h=h\g \;\;  \forall \g \in \ga\}.
$$
An element $h$ of $\e$ is called an \textit{an equivariant function} with respect to $\ga$, or a $\ga$-\textit{equivariant function}.

In the papers \cite {s-s} and \cite{ElbSeb}, a serious work was initiated on equivariant functions for the modular group and its subgroups.
 Indeed, they establish the basic properties of these functions, and give many non trivial connections with other topics such as elliptic functions, modular forms, quasi-modular forms, differential forms and sections of line bundles.
Our goal here is  to  generalize  the theory of equivariant functions to arbitrary discrete and non discrete subgroups of $\PR$.
 
 As expected, the geometry of these groups will play a crucial role in our study. For example, an equivariant function for a non elementary group $\ga$ attains any point of the extended plane $\Cc$ infinitely many times once its image contains a point of the limit set of $\ga$. In particular, a meromorphic equivariant function for $\ga$ will have infinitely many non $\ga$-equivalent poles.
Another illustration of this dependence between the geometry of the group and the analytic properties of equivariant functions, will be the complete description of $\e$ when $\ga$ is a non discrete subgroup of $\PR$. When $\ga$ is a Fuchsian group of the first kind, by using the celebrated theorem of Denjoy-Wolff, and inspired by the work of M. Heins, \cite{heins}, we show that the set holomorphic equivariant functions $\eh$ is $\{id_{\BH}\}$.

The importance of the above results lies in the fact that equivariant functions are intimately linked to automorphic forms. Indeed,  to each nonzero automorphic form $f$ on $\ga$ of reel weight $k\neq 0$ and MS $\nu$,  is  attached a meromorphic equivariant function $h_f$  defined by:
\begin{equation}\label{0equiv}
h_f(z)\,=\, z\,+\,k\frac{f(z)}{f'(z)}, ~ z\in {\BH},
\end{equation}
the class of equivariant functions constructed in this way is referred to as  {\it rational equivariant functions}.
It turn out that this class is extremely useful in studying automorphic forms for discrete and non discrete subgroups of $\PR$.
This is being illustrated in \sef{9} where we give a classification of unrestricted automorphic forms for a non discrete subgroup of $\PR$. In fact, the question of classification of these forms has been raised by several authors \cite{knopp1,Ber1, DJ}, and  different methods have been used to provide the answer. Our method relying on equivarience is new and simple.

A second application is to prove the striking result asserting that an  automorphic form of weight $k\neq 0$, and MS $\nu$ on a Fuchsian group of the first kind $\ga$ has infinitely many non $\ga$-equivalent critical points in $\BH$, the case $\ga$ is a modular subgroup was our joined work with A. Sebbar, see \cite{SS}.
This theorem can be thought as a kind of an automorphy test, for example, a direct application of this test shows that the exponential function, or a rational function,  cannot be an automorphic form for a Fuchsian group of the first kind.

The above  automorphy test allowed us again to give  proof of the fact that the Fourier series expansion of an automorphic form cannot have only finitely many nonzero coefficients. This important result is classically proved in a non elementary way using the Rankin-Selberg zeta function of the automorphic form in question, see \cite{KohMas}.

\tableofcontents

\section{\mts\,and subgroups of $\PR$}

In this section we list some known results about \mts\,and subgroups of $\PR$ that will be needed in the sequel. The main references are \cite{Ber} and \cite{katok}.

 Let $\Cc=\BC\cup\{\infty\}$ and $a,b,c,d$ be complex numbers with $ad-bc \neq0$, then the map $g: \Cc\rightarrow \Cc$ defined by
$$
 g(z)=\frac{az+b}{cz+d}
$$
is called a \textit{ M\"{o}bius transformation}. If $\M$ denotes the set of all M\"{o}bius transformations,
then $\M$, equipped with the composition of maps, is a group. Moreover, the map $A\rightarrow g_A$, where
$$
g_A(z)=\frac{az+b}{cz+d}\ ,\ \ A=\binom{a\ b}{c\ d}
$$
is a group isomorphism  $\phi:\PC=\SC/\{-I,I\}\longrightarrow \M$. If $g$ in $\M$, we define
 $$Tr(g)=\mbox{tr}^2(A)$$
 where $A$ is any matrix which projects on $g$. Notice that $Tr(g)$ is invariant under conjugation.

 For each $k\neq0$ in $\BC$, we define $m_k$ by
 $$ m_k(z)=kz~~\mbox{if}~  k\neq1 $$
 and
$$m_1(z)=z+1. $$
Notice that for all $k\neq0$, we have
$$Tr(m_k)=k+\frac{1}{k}+2.$$

\begin{defn}
Let $g\neq I$ be any M\"{o}bius transformation. We say that
\begin{\enu}
\item $g$ is \textit{parabolic} if and only if $g$ is conjugate to $m_1$(equivalently $g$ has a unique fixed point in $\Cc$);
\item $g$ is \textit{loxodromic} if and only if $g$ is conjugate to $m_k$ for some $k$ satisfying $|k|\neq1$ ($g$ has exactly two fixed points in $\Cc$);
\item $g$ is \textit{elliptic} if and only if $g$ is conjugate to $m_k$ for some $k$ satisfying $|k|=1$ ($g$ has exactly two fixed points in $\Cc$).
\end{\enu}
\end{defn}
\begin{defn}
Let $g$ be a loxodromic transformation. We say that $g$ is \textit{hyperbolic} if $g(D)=D$ for some open disc or half-plane $D$ in $\Cc$. Otherwise
$g$ is said to be \textit{strictly loxodromic}.
\end{defn}
\begin{thm}
Let $g\neq I$ be any \mt. Then
\begin{\enu}
\item $g$ is \textit{parabolic} if and only if $Tr(g)=4$;
\item $g$ is \textit{elliptic} if and only if $Tr(g)\in[0,4)$;
\item $g$ is \textit{hyperbolic} if and only if $Tr(g)\in(4,+\infty)$;
\item $g$ is \textit{strictly loxodromic} if and only if $Tr(g)\not\in[0,+\infty)$.
\end{\enu}
\end{thm}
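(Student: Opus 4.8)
The plan is to combine the conjugation invariance of $Tr$ with the classification of non-identity M\"obius transformations up to conjugacy into the normal forms $m_k$. First I would recall, or re-derive from the Jordan canonical form of a lift $A\in\SC$ of $g$, that if $A$ is diagonalizable with eigenvalues $\lambda,\lambda^{-1}$ and $\lambda^2\neq1$ then $g$ is conjugate to $m_{\lambda^2}\colon z\mapsto\lambda^2 z$, that $\lambda^2=1$ in the diagonalizable case forces $g=I$, and that otherwise $A$ is conjugate to a single Jordan block and $g$ is conjugate to $m_1$. This yields the trichotomy parabolic / elliptic / loxodromic for $g\neq I$, and --- using the already recorded identity $Tr(m_k)=k+\frac1k+2$ and the invariance of $Tr$ under conjugation --- reduces the whole theorem to understanding the function $\psi(k)=k+\frac1k+2$ on $\BC^{*}$, together with its behaviour on the subsets $\{|k|=1\}$ and $\{k\in(0,\infty)\}$.

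The second step is the elementary analysis of $\psi$. Setting $\psi(k)=t$, i.e. $k+\frac1k=t-2$, the number $k$ is a root of $X^{2}-(t-2)X+1=0$, whose two roots multiply to $1$. I would then check: $\psi(k)=4\iff (k-1)^2=0\iff k=1$; if $|k|=1$, say $k=e^{i\theta}$ with $\theta\not\equiv0$, then $\psi(k)=2+2\cos\theta\in[0,4)$, and conversely if $t-2\in[-2,2)$ is real then the quadratic has roots of modulus $1$; and if $k\in(0,\infty)$ with $k\neq1$ then AM--GM gives $\psi(k)=k+\frac1k+2>4$, the map $k\mapsto k+\frac1k$ carries $(0,\infty)\setminus\{1\}$ onto $(2,\infty)$, while conversely $t-2>2$ forces both roots of the quadratic to be positive reals. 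Thus $\psi^{-1}(\{4\})=\{1\}$, $\psi^{-1}([0,4))=\{\,|k|=1,\ k\neq1\,\}$ and $\psi^{-1}((4,\infty))=\{\,k\in(0,\infty),\ k\neq1\,\}$, and these three sets of parameters are pairwise disjoint.

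Third, I would settle the geometric point that separates hyperbolic from strictly loxodromic: for $|k|\neq1$, the transformation $m_k$ is hyperbolic if and only if $k\in(0,\infty)$. If $k>0$ then $m_k(\BH)=\BH$, so $D=\BH$ works. Conversely, the fixed points of $m_k$ are $0$ and $\infty$; since $|k|\neq1$, the map $z\mapsto kz$ fixes no circle of $\Cc$ at all except, when $k\in\BR$, the lines through $0$, and even then, when $k<0$, it interchanges the two half-planes bounded by such a line. Hence no disc or half-plane can be $m_k$-invariant unless $k>0$.

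Finally I would assemble the equivalences. From the trichotomy and Step 2: $g$ is parabolic $\iff g$ is conjugate to $m_1$ $\iff Tr(g)=4$ (for the last step one also uses that $Tr(g)=4$ with $g\neq I$ forces the lift to be a non-diagonalizable Jordan block); $g$ is elliptic $\iff g$ is conjugate to some $m_k$ with $|k|=1$, $k\neq1$, $\iff Tr(g)\in[0,4)$; and, invoking Step 3, $g$ is hyperbolic $\iff g$ is conjugate to some $m_k$ with $k\in(0,\infty)$, $k\neq1$, $\iff Tr(g)\in(4,\infty)$. Since every $g\neq I$ is exactly one of parabolic, elliptic, hyperbolic or strictly loxodromic, and the first three have now been matched with the pairwise disjoint trace-sets $\{4\}$, $[0,4)$ and $(4,\infty)$, whose union is $[0,\infty)$, the strictly loxodromic transformations are precisely those with $Tr(g)\notin[0,\infty)$, which is the fourth assertion. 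I expect Step 3 to be the only real obstacle: the other steps are bookkeeping about $\psi$, whereas Step 3 needs the classification of $z\mapsto kz$-invariant circles and lines and, more delicately, the side/orientation argument that rules out negative real $k$.
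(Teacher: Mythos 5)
The paper offers no proof of this statement: it is quoted in the background section as a known result from Beardon and Katok, so there is nothing in-paper to compare against. Your proposal is correct and is essentially the standard reference argument: reduce to the normal forms $m_k$ via the Jordan form of a lift in $\SC$, use conjugation invariance of $Tr$ together with $Tr(m_k)=k+\tfrac1k+2$, analyse the quadratic $X^2-(t-2)X+1$ whose roots multiply to $1$, and separate hyperbolic from strictly loxodromic by the existence of an invariant disc or half-plane. The only compressed point is your claim in Step 3 that an $m_k$-invariant circle must be a line through $0$; it is justified because a nonempty closed $m_k$-invariant subset of $\Cc$ must contain both fixed points (the orbits $k^{\pm n}z$ accumulate at $\infty$ and $0$), after which your observation that $k<0$ swaps the two half-planes bounded by such a line correctly completes the characterisation.
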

The following result concerns  the iterates of  loxodromic and parabolic \mts.

\begin{thm}
\label{iterates}
 $ $
\begin{\enu}
\item Let $g$ be a parabolic M\"{o}bius transformation  with a fixed  point $\al$. Then for all $z\in \Cc$, $g^n(z)\rightarrow \al$ as $n\rightarrow \infty$; the convergence being
uniform on compact subsets of $\BC-\{\al\}$.
\item Let $g$ be a loxodromic M\"{o}bius transformation. Then the fixed points $\al$ and $\beta$ of $g$ can be labeled so that $g^n(z)\rightarrow \al$ as $n\rightarrow \infty$
for all $z\in \Cc-\{\beta\}$; the convergence being uniform on compact subsets of $\BC-\{\beta\}$.
Here $\al$ is called an \textit{attractive} point and $\bt$ a \textit{repulsive} point.
\end{\enu}
\end{thm}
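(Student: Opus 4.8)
The plan is to reduce both assertions to the model maps $m_1$ and $m_k$ by conjugation, using that iteration is equivariant under conjugation: if $g=h^{-1}\circ m\circ h$ for some \mt $h$, then $g^n=h^{-1}\circ m^n\circ h$ for every $n$, and since $h$ is a homeomorphism of $\Cc$ for the spherical metric, the asymptotics of $(g^n)$ are obtained by transporting those of $(m^n)$ through $h^{-1}$. By definition a parabolic $g\neq I$ is conjugate to $m_1$ and a loxodromic $g$ is conjugate to some $m_k$ with $|k|\neq1$, which is exactly what makes this reduction available.

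For the first assertion I would pick $h$ with $hgh^{-1}=m_1$. Since $g$ has $\al$ as its unique fixed point and $m_1$ has $\infty$ as its unique fixed point, necessarily $h(\al)=\infty$. Now $m_1^n(w)=w+n$, so $m_1^n(w)\to\infty$ for each $w\in\BC$, uniformly on compact subsets of $\BC$ (if $|w|\le M$ then $|w+n|\ge n-M$). Transporting through $h^{-1}$, for $z\in\Cc$ we get $g^n(z)=h^{-1}(h(z)+n)$, which tends to $h^{-1}(\infty)=\al$ whether $h(z)$ is finite or (when $z=\al$) equal to $\infty$. For uniformity on a compact $K\subset\BC-\{\al\}$, note that $h$ carries $K$ onto a compact subset of $h(\Cc-\{\al\})=\Cc-\{\infty\}=\BC$, so $h(z)+n\to\infty$ uniformly on $K$; since $h^{-1}$ is uniformly continuous on the compact metric space $\Cc$, the convergence $g^n\to\al$ is uniform on $K$.

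For the second assertion I would take $h$ with $hgh^{-1}=m_k$, $|k|\neq1$, and then, after a further conjugation by $z\mapsto1/z$ (which replaces $m_k$ by $m_{1/k}$), assume $|k|<1$. Then $m_k^n(w)=k^nw\to0$ for every $w\in\BC$, uniformly on compact subsets of $\BC$, while $m_k^n(\infty)=\infty$ for all $n$, so $0$ is the attractive and $\infty$ the repulsive fixed point of $m_k$. Since conjugation is a bijection of $\Cc$ carrying fixed points to fixed points, $g$ has exactly the two fixed points $h^{-1}(0)$ and $h^{-1}(\infty)$, which we label $\al$ and $\bt$ respectively. For $z\neq\bt$ we have $h(z)\neq\infty$, hence $g^n(z)=h^{-1}(k^nh(z))\to h^{-1}(0)=\al$; and for a compact $K\subset\BC-\{\bt\}$ the image $h(K)$ is a compact subset of $\BC$, so $k^nh(z)\to0$ uniformly on $K$ and therefore $g^n\to\al$ uniformly on $K$, again by uniform continuity of $h^{-1}$.

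The only point requiring care is the uniformity, and it is a matter of choosing the right metric rather than a real obstacle: one works throughout with the spherical metric on $\Cc$, uses that a \mt is a homeomorphism of the compact space $\Cc$ and hence uniformly continuous, and uses that a point of large modulus is spherically close to $\infty$. Everything else is the trivial dynamics of the translations $w\mapsto w+n$ and the scalings $w\mapsto k^nw$. One could instead avoid the spherical metric by fixing coordinates that place the fixed point(s) at $0$ and $\infty$ and carrying out the estimates by hand; the conjugation argument is simply the efficient packaging of that.
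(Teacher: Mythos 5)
Your argument is correct: the reduction to the model maps $m_1$ and $m_k$ by conjugation, the elementary dynamics of $w\mapsto w+n$ and $w\mapsto k^nw$, and the transport of uniform convergence through the uniformly continuous homeomorphism $h^{-1}$ of the sphere all hold up, including the care taken that a compact subset of $\BC-\{\al\}$ (resp.\ $\BC-\{\bt\}$) has image under $h$ that is a compact subset of $\BC$. The paper states this theorem without proof as background material cited from Beardon and Katok, and your conjugation argument is precisely the standard proof found in those references, so there is nothing to compare beyond noting agreement.
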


The group $\PC$ inherit the topology of $\SC$, which is topological space with respect to the standard norm of $\BC^4$. A  subgroup $\ga$ of $\PC$ is called \textit{discrete} if the subspace topology on $\ga$ is the discrete topology. Thus $\ga$ is discrete if and only if for a sequence $T_n$ in $\ga$, $T_n \ra T\in \GC$ implies $T_n=T$ for all sufficiently large $n$.

\begin{defn}
A discrete subgroup of $\PR=\SR/\{-I,I\}$ is called a \textit{Fuchsian} group.
\end{defn}

 $\PR$ is the group of automorphism of $\BH$, and it acts on $\BH$ by \mts:
\begin{equation}
\g z= \frac{az+b}{cz+d},~  z\in {\mathbb H}\,,\g= \pm\binom{a\ b}{c\ d}\,\in \SR.
\end{equation}

\begin{defn}
We say that a subgroup $\ga$ of $\PR$ acts properly discontinuously on $\BH$ if the $\ga-$orbit of any point $z$ in $\BH$ is a discrete subset of $\BH$.
\end{defn}

\begin{thm}
\label{th of dis}
Let $\ga$ be a subgroup of $\PR$. Then $\ga$ is a Fuchsian group if and only if $\ga$ acts properly discontinuously on $\BH$.
\end{thm}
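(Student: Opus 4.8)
The statement splits into two implications, which I would handle separately.

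\emph{Discrete implies properly discontinuous.} I would argue by contradiction. If a Fuchsian group $\ga$ does not act properly discontinuously, then some orbit $\ga z_0\subset\BH$ fails to be discrete in $\BH$; since $\ga z_0$ coincides with the orbit of each of its points, one may assume $z_0$ itself is a non-isolated point of $\ga z_0$, so there is a sequence of distinct elements $T_n=\pm\binom{a_n\ b_n}{c_n\ d_n}\in\ga$ with $T_n z_0\to z_0$ (distinct because their images $T_n z_0$ are). The key step is then a boundedness estimate. Using the standard identity $\mbox{Im}(\g z)=\mbox{Im}(z)/|cz+d|^2$, the convergence $T_n z_0\to z_0$ together with $\mbox{Im}(z_0)>0$ forces $|c_n z_0+d_n|^2\to 1$; writing $z_0=x_0+iy_0$ this bounds $c_n$ and then $d_n$. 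Next $a_n z_0+b_n=(T_n z_0)(c_n z_0+d_n)$ is a product of bounded quantities, hence bounded, and taking imaginary parts (again $y_0>0$) bounds $a_n$ and then $b_n$. So the matrices $T_n$ lie in a bounded subset of $\SR$; a subsequence converges to some $T\in\SR$ (note $\SR$ is closed in $\GC$), exhibiting infinitely many distinct elements of $\ga$ converging in $\GC$ and contradicting discreteness.

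\emph{Properly discontinuous implies discrete.} Again by contradiction: if $\ga$ is not discrete, some point of $\ga$ is not isolated, and multiplying by its inverse we obtain a sequence of distinct elements $S_n\in\ga$ with $S_n\neq I$ and $S_n\to I$. Fix two distinct points $z_0,z_1\in\BH$. An element of $\PR$ fixing two distinct points of $\BH$ is the identity, so since each $S_n\neq I$ there is, for at least one of the two points---say $z_0$---an infinite set of indices with $S_n z_0\neq z_0$ (otherwise, past a common index, every $S_n$ would fix both $z_0$ and $z_1$ and hence equal $I$). Continuity of the action gives $S_n z_0\to z_0$, so $z_0$ is a non-isolated point of the orbit $\ga z_0$; thus $\ga z_0$ is not a discrete subset of $\BH$, contradicting proper discontinuity.

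The main obstacle is the boundedness estimate in the first implication: passing from convergence of a single orbit to relative compactness of the corresponding set of group elements. This is exactly where the hyperbolic geometry of $\BH$---or, conceptually, the compactness of point stabilizers in $\PR$ and hence the properness of the orbit map $\g\mapsto\g z_0$---enters; the identity for $\mbox{Im}(\g z)$ is merely a convenient way to make it explicit. Everything else is routine bookkeeping with the subspace topology and the definitions of discreteness and proper discontinuity.
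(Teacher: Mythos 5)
Your proof is correct. Note, however, that the paper itself offers no proof of this statement: Theorem~\ref{th of dis} appears in Section~2 as part of a list of background facts quoted from the references (Beardon and Katok), so there is nothing internal to compare against. Your argument is essentially the standard one from those sources: the only genuinely nontrivial point, as you say, is the boundedness estimate showing that $T_n z_0\to z_0$ forces the matrices $T_n$ into a compact subset of $\SR$, and your use of $\mathrm{Im}(\g z)=\mathrm{Im}(z)/|cz+d|^2$ together with $\mathrm{Im}(z_0)>0$ handles this correctly; the converse direction, via the fact that a non-identity element of $\PR$ fixes at most one point of $\BH$, is also sound and works with the paper's orbit-based definition of proper discontinuity.
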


\begin{thm}
\label{th of neib}
Let $\ga$ be a subgroup of $\PR$. Then the following statements are equivalent:
\begin{\enu}
\item $\ga$ acts properly discontinuously on $\BH$;
\item For any compact set $K$ in $\BH$, $T(K)\cap K\neq \emptyset$ for only finitely many $T\in \ga$;
\item Any point $x\in\BH$ has a neighborhood $V$ such that $T(V)\cap V\neq\emptyset$ implies T(x)=x.
\end{\enu}
\end{thm}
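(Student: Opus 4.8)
The plan is to prove the cyclic chain of implications $(2)\Rightarrow(3)\Rightarrow(1)\Rightarrow(2)$. The only external facts I will use are that $\PR$ acts on $\BH$ by isometries for the hyperbolic metric $d$ (so $d(Tz,Tw)=d(z,w)$ for all $T\in\ga$ and $z,w\in\BH$), that $\BH$ is locally compact and Hausdorff, and that the stabiliser in $\PR$ of a point of $\BH$ is a circle subgroup (being conjugate to $\mathrm{PSO}(2)$), so that any infinite subgroup of such a stabiliser is dense in it.

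For $(2)\Rightarrow(3)$: given $x\in\BH$, choose a compact neighbourhood $K$ of $x$. By (2) the set $F=\{T\in\ga:\,T(K)\cap K\neq\emptyset\}$ is finite, hence so is $F_0=\{T\in F:\,Tx\neq x\}$. For each $T\in F_0$ separate $x$ and $Tx$ by disjoint open sets $A_T\ni x$ and $B_T\ni Tx$, and set $V_T=A_T\cap T^{-1}(B_T)\cap\mathrm{int}(K)$; this is an open neighbourhood of $x$ with $T(V_T)\cap V_T=\emptyset$. Then $V=\bigcap_{T\in F_0}V_T$ (interpreted as $\mathrm{int}(K)$ when $F_0$ is empty) does the job: if $T(V)\cap V\neq\emptyset$ then $T\in F$ because $V\subseteq K$, and $T\notin F_0$ because $V\subseteq V_T$, so $Tx=x$. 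For $(3)\Rightarrow(1)$: fix $z\in\BH$ and a point $w=T_0z$ of its orbit, and let $V$ be a neighbourhood of $w$ as in (3). If $Tz\in V$ for some $T\in\ga$, then with $S=TT_0^{-1}$ we have $Sw=Tz\in V\cap S(V)$, so (3) forces $Sw=w$, i.e. $Tz=w$. Hence $V\cap\ga z=\{w\}$, so $w$ is isolated in $\ga z$; since $w$ was an arbitrary orbit point and $z$ an arbitrary point of $\BH$, every orbit is discrete.

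The implication $(1)\Rightarrow(2)$ is the heart of the matter. Arguing by contradiction, suppose some compact $K\subseteq\BH$ satisfies $T_n(K)\cap K\neq\emptyset$ for a sequence of pairwise distinct $T_n\in\ga$, say with $x_n,\,T_nx_n\in K$. After passing to a subsequence, $x_n\to x^\ast$ and $T_nx_n\to y^\ast$ with $x^\ast,y^\ast\in K\subseteq\BH$; since the $T_n$ are isometries, $d(T_nx^\ast,y^\ast)\le d(x^\ast,x_n)+d(T_nx_n,y^\ast)\to 0$ and likewise $d(T_n^{-1}y^\ast,x_n)=d(y^\ast,T_nx_n)\to 0$, so $T_nx^\ast\to y^\ast$ and $T_n^{-1}y^\ast\to x^\ast$. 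Put $W_n=T_n^{-1}T_{n+1}\in\ga$; applying the isometry property once more gives $W_nx^\ast=T_n^{-1}(T_{n+1}x^\ast)\to x^\ast$. If $W_nx^\ast\neq x^\ast$ for infinitely many $n$, then $x^\ast$ is a non‑isolated point of its own orbit $\ga x^\ast$, contradicting (1). Otherwise $W_nx^\ast=x^\ast$ for all large $n$, which forces $T_m\in T_N\ga_{x^\ast}$ for all large $m$, where $\ga_{x^\ast}=\{T\in\ga:\,Tx^\ast=x^\ast\}$; since the $T_m$ are pairwise distinct, $\ga_{x^\ast}$ is infinite, hence dense in the circle group $\mathrm{Stab}_{\PR}(x^\ast)$, and then for any point $x^{\ast\ast}\neq x^\ast$ the orbit $\ga_{x^\ast}x^{\ast\ast}$ is dense in the hyperbolic circle about $x^\ast$ through $x^{\ast\ast}$ and accumulates at $x^{\ast\ast}\in\ga x^{\ast\ast}$, again contradicting (1). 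This establishes $(1)\Rightarrow(2)$ and closes the chain.

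I expect the step $(1)\Rightarrow(2)$ to be the main obstacle. Beyond the routine extraction of convergent subsequences and the systematic use of the fact that $\ga$ acts by hyperbolic isometries, the delicate point is disposing of the degenerate case in which the returning elements $T_n$ eventually all lie in a single coset of a point stabiliser; this is precisely where one must invoke the circle‑group structure of $\mathrm{Stab}_{\PR}(x^\ast)$ (an infinite subgroup of a circle is dense), converting the degeneracy into a violation of discreteness of the orbit of a nearby point. The other two implications are comparatively soft, relying only on local compactness, the Hausdorff property, and the group law.
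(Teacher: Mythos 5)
Your proof is correct. Note, however, that the paper offers no proof of this statement: it appears in the preliminary Section 2 as a known result quoted from the standard references (Beardon, \emph{The geometry of discrete groups}, and Katok, \emph{Fuchsian groups}), so there is no argument in the paper to compare yours against; what you have written is essentially the classical textbook proof. The one point worth highlighting is that the paper's definition of ``properly discontinuous'' is the weak one (each orbit is a discrete subset of $\mathbb{H}$), and the only genuinely delicate step in $(1)\Rightarrow(2)$ under that definition is exactly the degenerate case you isolate, where the returning elements $T_n$ eventually lie in one coset of the stabiliser $\Gamma_{x^\ast}$; your observation that an infinite subgroup of the circle group $\mathrm{Stab}_{\mathrm{PSL}_2(\mathbb{R})}(x^\ast)$ is dense, so that the orbit of a nearby point $x^{\ast\ast}\neq x^\ast$ fails to be discrete, closes that gap correctly. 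The remaining implications $(2)\Rightarrow(3)\Rightarrow(1)$ are handled soundly using only local compactness, the Hausdorff property, and the group law.
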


\begin{thm}
\label{th 4}
Two non-identity elements of $\PR$ commute if and only if they have the same fixed-point set.
\end{thm}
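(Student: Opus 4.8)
I would establish the two implications separately; the forward one --- that commuting forces the fixed-point sets to coincide --- carries all the content.

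For the reverse implication I would reduce to normal form: conjugating $g$ and $h$ by a common element $\phi\in\M$ replaces their shared fixed-point set $F$ by $\phi(F)$ and does not affect whether they commute, so I may assume $F=\{\infty\}$ when $|F|=1$ and $F=\{0,\infty\}$ when $|F|=2$ (recall that a non-identity element of $\M$ has exactly one or exactly two fixed points in $\Cc$). In the first case $g$ and $h$ are parabolic with fixed point $\infty$, hence translations $z\mapsto z+t$, which commute; in the second, $g$ and $h$ each fix both $0$ and $\infty$, hence have the form $z\mapsto kz$, which commute.

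For the forward implication, assume $gh=hg$ and write $\mathrm{Fix}(\cdot)$ for the fixed-point set. The engine is the remark that $h$ then carries $\mathrm{Fix}(g)$ onto itself and, symmetrically, $g$ carries $\mathrm{Fix}(h)$ onto itself: if $g\alpha=\alpha$ then $g(h\alpha)=h(g\alpha)=h\alpha$. I would then split on the type of $g$; note that an element of $\PR$ lifts to a real matrix, so $Tr(g)\ge 0$ and $g$ is not strictly loxodromic, hence is parabolic, elliptic, or hyperbolic. If $g$ is parabolic, $\mathrm{Fix}(g)=\{\alpha\}$, then $h\alpha=\alpha$; and if $h$ had a second fixed point $\beta$, then $g$ --- permuting $\mathrm{Fix}(h)=\{\alpha,\beta\}$ and fixing $\alpha$ --- would also fix $\beta$, impossible for a parabolic element; so $\mathrm{Fix}(h)=\{\alpha\}=\mathrm{Fix}(g)$. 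If $g$ has two fixed points $\{\alpha,\beta\}$, then $h$ either fixes both of them --- giving $\mathrm{Fix}(g)\subseteq\mathrm{Fix}(h)$ and hence equality, since $|\mathrm{Fix}(h)|\le 2$ --- or else interchanges them, and the whole problem comes down to excluding the interchange.

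Excluding the interchange is exactly where the hypothesis $\ga\subseteq\PR$ is needed, and the two subcases diverge. If $g$ is hyperbolic I would appeal to \tf{iterates}: label the fixed points so that $g^n(z)\to\alpha$ for every $z\neq\beta$; then for $z\notin\{\alpha,\beta\}$ one has $h(z)\neq\beta$, whence $g^n(h(z))\to\alpha$, while $g^n(h(z))=h(g^n(z))\to h(\alpha)=\beta$ by continuity of $h$, forcing $\alpha=\beta$, a contradiction. If $g$ is elliptic, then, its lift having real entries, its two fixed points form a complex-conjugate pair $w,\bar w$ with $w\in\BH$ and $\bar w$ in the lower half-plane; since $h\in\PR$ maps $\BH$ into itself, $h(w)\in\BH$, whereas $\bar w\notin\BH$, so $h(w)\neq\bar w$ and $h$ cannot interchange the fixed points. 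Either way $h$ fixes $\alpha$ and $\beta$, so $\mathrm{Fix}(g)\subseteq\mathrm{Fix}(h)$ and hence $\mathrm{Fix}(g)=\mathrm{Fix}(h)$. The main obstacle is conceptual rather than computational: the statement genuinely fails over $\M$ --- for instance $z\mapsto -z$ and $z\mapsto 1/z$ commute yet have distinct fixed-point sets, precisely because an order-two map can interchange its partner's two fixed points --- so the proof must pinpoint where passing to $\PR$ rescues it, namely the invariance of $\BH$, which is exactly what blocks that mechanism in the elliptic case; everything else is the bookkeeping above together with \tf{iterates}.
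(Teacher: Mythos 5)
The paper does not actually prove this statement: it sits in the background section (Section 2), which collects known facts from \cite{Ber} and \cite{katok} without proof, so there is no in-paper argument to compare yours against. Your proof is correct and is essentially the standard textbook argument (Katok, Theorem 2.3.2): commuting elements permute each other's fixed-point sets, and the only delicate point is ruling out the transposition of the two fixed points of a non-parabolic element. You exclude that swap dynamically via \tf{iterates} in the hyperbolic case and via the invariance of $\BH$ in the elliptic case; the usual textbook route does the hyperbolic case instead by normal form (if $g(z)=\lambda z$ and $h$ swaps $0$ and $\infty$ then $hgh^{-1}(z)=\lambda^{-1}z\neq\lambda z$), which is interchangeable with your limit argument. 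Your closing observation that the statement fails in $\M$ (e.g.\ $z\mapsto -z$ and $z\mapsto 1/z$) correctly isolates why membership in $\PR$ is essential.
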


It is clear that $\PR$ acts on $\Rr=\BR\cup\{\infty\}$, and so we can extend its action on $\BH$ to $\Hh=\BH\cup\BR\cup\{\infty\}$.

\begin{defn}
A subgroup $\ga$ of $\PR$ is called elementary if there exists a finite $\ga$-orbit in $\Hh$.
\end{defn}

\begin{thm}
\label{th6}
 Let $\ga$ be a subgroup of $\PR$ containing  only elliptic elements besides the identity.
 Then all elements of $\ga$ have the same fixed point in $\BH$ and hence $\ga$ is an abelian elementary group.
\end{thm}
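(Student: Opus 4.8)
The plan is to exploit \tf{th 4} together with the description of fixed-point sets of elliptic elements. First I would recall that an elliptic element $g\in\PR$ has exactly one fixed point in $\BH$ (the other, which exists in $\Cc$ by the classification, is its complex conjugate and lies in the lower half-plane, so it is not in $\Hh$). Thus the fixed-point set in $\BH$ of an elliptic element is a single point of $\BH$, and by \tf{th 4} two non-identity elliptic elements of $\PR$ commute if and only if they share that fixed point in $\BH$.

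Next I would argue that all elliptic elements of $\ga$ do in fact share a common fixed point. Suppose not: pick two elliptic elements $g,h\in\ga\setminus\{I\}$ with distinct fixed points $p\neq q$ in $\BH$. Then $g$ and $h$ do not commute, so the commutator $[g,h]=ghg^{-1}h^{-1}$ is a non-identity element of $\ga$, hence by hypothesis elliptic. The idea is to derive a contradiction by producing, from $g$ and $h$, an element of $\ga\setminus\{I\}$ that is \emph{not} elliptic — equivalently, by \tf{th 4}/the trace classification, an element whose square of trace lies outside $[0,4)$. One clean way: conjugate so that $g$ is a rotation about $i$, write $h$ explicitly as a non-identity element of $\SR$ fixing some $q\neq i$, and show that for a suitable $n$ the element $g^n h$ has $Tr(g^nh)\notin[0,4)$ (e.g.\ is parabolic or hyperbolic); since the rotation subgroup generated by $g$ is dense in the rotation group (or at least has several elements when $g$ has finite order $\geq 2$) one has enough freedom in $n$. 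Alternatively, and perhaps more transparently, use the standard fact that a group generated by two elliptic elements with different fixed points always contains a hyperbolic (or parabolic) element — this is exactly the kind of statement available from the geometry references \cite{Ber,katok} cited at the head of the section, and I would invoke the trace identity $Tr(gh)+Tr(gh^{-1}) = Tr(g)\,Tr(h)$-type relations to make it concrete. Either route contradicts the assumption that $\ga\setminus\{I\}$ consists only of elliptic elements.

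Once all elliptic elements of $\ga$ share a common fixed point $p\in\BH$, the conclusion is immediate: every pair of non-identity elements of $\ga$ has the same fixed-point set $\{p\}$ (together with $\bar p$), so by \tf{th 4} they all pairwise commute, i.e.\ $\ga$ is abelian. Moreover the orbit $\ga p=\{p\}$ is finite, so $\ga$ is elementary by definition. This finishes the proof.

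The main obstacle is the middle step: showing rigorously that two non-commuting elliptic elements of $\PR$ generate a subgroup containing a non-elliptic element. The cleanest proof is a short trace computation after normalizing $g$ to be a standard rotation, but one must be slightly careful because $g$ need not have finite order, so the argument should not rely on $g$ generating a finite group; instead it should use that one can choose the rotation angle of $g^n$ (or simply use $g$ itself) to push $Tr(g^nh)$ out of $[0,4)$. I would present this as a self-contained lemma or fold it into the proof, citing \cite{Ber} for the underlying elementary-group classification if a fully explicit computation is deemed too long.
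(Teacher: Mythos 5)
The paper gives no proof of this statement: it is quoted in Section~2 as a background fact with references to \cite{Ber} and \cite{katok}, so your proposal can only be measured against the standard argument. Your overall architecture is the right one and matches it: reduce everything to showing that any two non-identity elements of $\ga$ share their fixed point in $\BH$, then invoke \tf{th 4} to get commutativity, and observe that the common fixed point is a one-point $\ga$-orbit, whence $\ga$ is elementary. Those last two steps are correct as you state them.

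The gap is the middle step, which you yourself flag as the main obstacle but do not actually close. Your concrete mechanism --- find $n$ with $Tr(g^nh)\notin[0,4)$ --- does not work in general: the product of two elliptic elements with distinct fixed points can perfectly well be elliptic (two rotations by small angles about nearby points compose to a rotation, since $|\mathrm{tr}|<2$ is an open condition), and if $g$ has order $2$ the only candidate is $gh$ itself, which can be elliptic, so the claimed ``freedom in $n$'' is not available. Your fallback of citing \cite{Ber} for ``two elliptics with distinct fixed points generate a non-elliptic element'' is essentially citing the theorem you are being asked to prove. The element that does the job is the one you wrote down and then abandoned: the commutator. Pass to the disc model and normalize so that $g$ fixes $0$; by \tf{elliptic} you may take $g=\binom{u\ \ 0}{0\ \ \bar u}$ with $|u|=1$, $u^{2}\neq1$, and $h=\binom{a\ \ \bar c}{c\ \ \bar a}$ with $a\bar a-c\bar c=1$ and $c\neq0$ (since $h$ does not fix $0$). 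A two-line computation gives
$$
\mathrm{tr}\left(ghg^{-1}h^{-1}\right)=2|a|^{2}-(u^{2}+\bar u^{2})|c|^{2}=2+2|c|^{2}\left(1-\cos 2\theta\right)>2,\qquad u=e^{i\theta},
$$
so $Tr(ghg^{-1}h^{-1})>4$ and the commutator is hyperbolic, contradicting the hypothesis that every non-identity element of $\ga$ is elliptic. With this lemma in place, the rest of your proof goes through verbatim.
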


\begin{thm} \label{elementary groups}
Any elementary  group is either:
\begin{enumerate}
 \item An abelian group containing beside the identity, only elliptic elements, or only parabolic elements, or only hyperbolic elements.
 \item Conjugate in $\PR$ to a group generated by $h(z)=-1/z$ and by elements of the form $g(z)=lz$ ($l>1$).
 \item Conjugate in $\PR$ to a group whose elements have the form $az+b$, $a,b$ in $\BR$, and contains both parabolic and  hyperbolic elements.
\end{enumerate}

\end{thm}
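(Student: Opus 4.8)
The plan is to argue by cases according to where a finite orbit lives. By definition $\ga$ has a finite orbit $F$ in $\Hh=\BH\cup\Rr$, and since $\PR$ preserves $\BH$ and $\Rr$ separately, $F$ lies entirely in $\BH$ or entirely in $\Rr$. First I would dispose of the two configurations that land in case (1). If $F\subseteq\BH$, then each $g\in\ga$ permutes the finite set $F$, so some positive power $g^m$ fixes a point of $\BH$ and is therefore elliptic or the identity; since a positive power of a parabolic (resp.\ hyperbolic) element is again parabolic (resp.\ hyperbolic), and an element of finite order in $\PR$ must be elliptic, $g$ itself is elliptic or the identity. Then \tf{th6} shows $\ga$ is abelian with a common fixed point in $\BH$, which is case (1). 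If $|F|\ge 3$ (wherever $F$ lies), the pointwise stabilizer of $F$ fixes three points of $\Cc$, hence is trivial, so $\ga$ embeds in $\mathrm{Sym}(F)$ and is finite; every nonidentity element then has finite order, hence is elliptic, and \tf{th6} again gives case (1). So I may henceforth assume $\ga$ has a finite orbit in $\Rr$ of size $1$ or $2$.

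Next, suppose $\ga$ fixes a point of $\Rr$; conjugating in $\PR$ I may assume it fixes $\infty$, so $\ga\subseteq\{z\mapsto az+b:\ a>0,\ b\in\BR\}$ and in particular $\ga$ has no elliptic element. A nonidentity map $z\mapsto az+b$ of this form is parabolic exactly when $a=1$ and hyperbolic exactly when $a\neq 1$. If $\ga$ has no hyperbolic element it consists of translations, hence is abelian with only parabolic elements --- case (1). If $\ga$ has no parabolic element it has no nontrivial translation, so $z\mapsto az+b\longmapsto a$ is an injective homomorphism $\ga\to(\BR_{>0},\cdot)$, whence $\ga$ is abelian with only hyperbolic elements --- case (1). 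If $\ga$ contains both a parabolic and a hyperbolic element, this is exactly case (3).

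Finally, suppose the finite orbit is a genuine two-point subset of $\Rr$; conjugating, take it to be $\{0,\infty\}$. The subgroup $\ga_0$ fixing $0$ and $\infty$ has index $1$ or $2$ in $\ga$ and consists of the maps $z\mapsto az$, $a>0$; if $\ga=\ga_0$ we are again in case (1). Otherwise any $h\in\ga\setminus\ga_0$ interchanges $0$ and $\infty$, and a short computation shows every element of $\PR$ doing so has the form $z\mapsto -b/z$ with $b>0$; conjugating by $z\mapsto z/\sqrt{b}$ (which fixes $0$ and $\infty$, so does not disturb the normalization) turns $h$ into $z\mapsto -1/z$. Then $\ga=\langle\ga_0,h\rangle$ is generated by $h$ together with the maps $z\mapsto lz$ ($l>1$) that generate $\ga_0$ --- case (2).

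The conceptual input is light --- essentially \tf{th6} together with the fact that a nontrivial M\"obius transformation has at most two fixed points --- and I do not expect any single hard step. The main care needed is organizational: checking that the trichotomy (1)--(3) is exhaustive, that each subcase is filed under the correct item, and that the two conjugacy normalizations are carried out cleanly.
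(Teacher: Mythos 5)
Your proof is correct. There is nothing in the paper to compare it against: \tf{elementary groups} appears in the background section as a known classification quoted from the references \cite{Ber} and \cite{katok}, and the author gives no proof. Your argument is a complete, self-contained version of the standard one: you split on whether the finite orbit $F$ lies in $\BH$ or in $\Rr$ and on its cardinality, use \tf{th6} to dispose of the all-elliptic configurations (orbit in $\BH$, or $|F|\ge 3$ forcing finiteness), and normalize the remaining boundary cases to the fixed sets $\{\infty\}$ and $\{0,\infty\}$. The individual steps all check out: nonzero powers of parabolic (resp.\ hyperbolic) elements stay parabolic (resp.\ hyperbolic), so an element with an elliptic-or-trivial power is itself elliptic or trivial; elements of $\PR$ fixing $\infty$ have the form $z\mapsto az+b$ with $a>0$ and are never elliptic; the map $az+b\mapsto a$ is an injective homomorphism into $(\BR_{>0},\cdot)$ when there are no nontrivial translations, giving abelianness; elements interchanging $0$ and $\infty$ are exactly $z\mapsto -b/z$ with $b>0$ and are normalized to $-1/z$ by a diagonal conjugation; and $\Gamma_0$ is generated by its elements with $l>1$ since inverses flip $l$ to $1/l$. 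The only cosmetic remark is that when the two-point orbit case degenerates ($\Gamma_0$ trivial, so $\Gamma$ is cyclic of order two generated by an elliptic involution) your filing places it under item (2) rather than item (1); since the three alternatives are not mutually exclusive, this is harmless.
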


\begin{cor}
Any elementary Fuchsian group is either cyclic or is conjugate in $\PR$ to a group generated by $g(z)=lz$ ($l>1$) and $h(z)=-1/z$.
\end{cor}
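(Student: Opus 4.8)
The plan is to combine the trichotomy of Theorem~\ref{elementary groups} with the one extra hypothesis at our disposal, namely that a Fuchsian group is discrete in $\PR$, so that all of its subgroups are discrete too. For the first two types of Theorem~\ref{elementary groups} the asserted conclusion drops out after a short discreteness argument; the real work is to show that the third type never yields a Fuchsian group.

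First I would dispose of type~(1). If $\ga$ contains, apart from the identity, only elliptic elements, then by Theorem~\ref{th6} all of them fix a common point $z_{0}\in\BH$, so conjugating $z_{0}$ to $i$ places $\ga$ inside the stabilizer of $i$ in $\PR$, which is compact and isomorphic to the circle group; since $\ga$ is discrete it is therefore finite, and a finite subgroup of the circle group is cyclic. If $\ga$ is abelian and contains besides the identity only parabolic elements, then any two non-identity elements commute, hence by Theorem~\ref{th 4} share the same (unique) fixed point on $\BR\cup\{\infty\}$; conjugating that point to $\infty$ makes every element a translation $z\mapsto z+b$, so $\ga$ embeds in $(\BR,+)$, and a discrete subgroup of $\BR$ is trivial or infinite cyclic. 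The purely hyperbolic subcase is identical, with the common fixed-point set of the elements conjugated to $\{0,\infty\}$: every element becomes a homothety $z\mapsto lz$ ($l>0$), so $\ga$ embeds in $(\BR_{>0},\cdot)\cong(\BR,+)$ and is again cyclic.

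Next I would treat type~(2), where $\ga$ is conjugate to a group generated by $h(z)=-1/z$ together with homotheties $z\mapsto lz$ ($l>1$). Set $\ga^{+}=\{g\in\ga:\ g(z)=lz,\ l>0\}$. This is a subgroup of $\ga$ containing every homothety generator, so $\ga=\langle\ga^{+},h\rangle$; and $\ga^{+}$, being discrete (a subgroup of the discrete group $\ga$) and contained in $(\BR_{>0},\cdot)\cong(\BR,+)$, is trivial or infinite cyclic. If it is trivial then $\ga=\langle h\rangle$ is cyclic. If $\ga^{+}=\langle g\rangle$ with $g(z)=lz$, then after replacing $g$ by $g^{-1}$ if necessary we may take $l>1$, and $\ga=\langle g,h\rangle$ is exactly of the form in the statement.

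The hard part is excluding type~(3). Suppose $\ga$ is conjugate to a group of affine maps $z\mapsto az+b$ (with $a>0$, since $\ga\subset\PR$) that contains a hyperbolic element $S$ and a parabolic element $T$. Every affine map fixes $\infty$, so a parabolic one, having a unique fixed point in $\Cc$, must have linear part $1$; thus $T(z)=z+b_{0}$ with $b_{0}\neq 0$. The hyperbolic element $S$ has a second, finite, fixed point, and conjugating by the translation that carries it to $0$ (which keeps every element affine and keeps $T$ a nonzero translation), then replacing $S$ by $S^{-1}$ if necessary, we may assume $S(z)=az$ with $a>1$. A short computation then gives $S^{-n}TS^{n}(z)=z+a^{-n}b_{0}$ for every $n\geq 1$, so $\{S^{-n}TS^{n}\}_{n\geq 1}$ is a sequence of pairwise distinct non-identity elements of $\ga$ converging to the identity, contradicting the discreteness of $\ga$ (equivalently, by Theorem~\ref{th of dis} and Theorem~\ref{th of neib}, contradicting proper discontinuity). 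Hence type~(3) cannot occur, and assembling the surviving cases proves the corollary. The only step demanding genuine care is this last contradiction, together with the bookkeeping needed to check that the conjugations used in the various cases are compatible with the normalizations chosen (the base point $i$, the pair $\{0,\infty\}$, and the direction $l>1$).
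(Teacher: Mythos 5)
The paper states this corollary without proof, as one of the background facts it imports from Beardon and Katok, so there is no in-paper argument to compare against. Your proof is correct and is essentially the standard derivation: the three abelian subcases of type~(1) each land in a one-parameter subgroup (the compact stabilizer of $i$, the translations, or the homotheties), whose discrete subgroups are cyclic; type~(2) reduces to the cyclicity of the discrete homothety subgroup $\ga^{+}$ of index at most two; and the decisive exclusion of type~(3) via $S^{-n}TS^{n}(z)=z+a^{-n}b_{0}\to z$, contradicting discreteness, is exactly the right argument. No gaps.
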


\begin{thm}
\label{th infinite hyperpolic}
A non-elementary subgroup $\ga$ of $\PR$ contains infinitely many hyperbolic elements, no two of which have a common fixed point.
\end{thm}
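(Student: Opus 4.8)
The plan is to build the required family of hyperbolic elements in three stages: first one such element, then a pair of them with disjoint fixed-point sets, and then infinitely many. First, I would check that $\ga$ contains a hyperbolic element at all. If every non-identity element of $\ga$ were elliptic, then \tf{th6} would make $\ga$ elementary; so $\ga$ has a parabolic or a hyperbolic element, and if it had no hyperbolic one I would take a parabolic $p\in\ga$ with unique fixed point $x_0\in\Rr$. Since $\ga$ is non-elementary, $\{x_0\}$ cannot be a $\ga$-orbit, so some $h\in\ga$ moves $x_0$, and $q:=hph^{-1}$ is parabolic with a fixed point different from $x_0$. After conjugating $\ga$ in $\PR$ I may assume that $p$ and $q$ are represented by the matrices $\begin{pmatrix}1&t\\0&1\end{pmatrix}$ and $\begin{pmatrix}1&0\\s&1\end{pmatrix}$ with $ts\neq0$; then the representing matrix of $p^{m}q^{n}$ has trace $2+mnts$, and for integers $m,n$ chosen (as we may, since $ts\neq0$) so that $|2+mnts|>2$ we get $Tr(p^{m}q^{n})>4$, so $p^{m}q^{n}\in\ga$ is hyperbolic --- contradicting the assumption. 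Hence $\ga$ contains a hyperbolic element.

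Next, I would produce two hyperbolic elements of $\ga$ whose fixed-point sets are disjoint. Suppose not; then $\mathcal F:=\{\mathrm{Fix}(g):g\in\ga\ \mbox{hyperbolic}\}$ is a nonempty, pairwise-intersecting family of two-point subsets of $\Rr$. An elementary observation about intersecting families of two-element sets shows that $\mathcal F$ is either a \emph{star} (all its members contain one common point $p$) or a \emph{triangle} $\{\{a,b\},\{b,c\},\{c,a\}\}$ with $a,b,c$ pairwise distinct. In the star case, fix a hyperbolic $f\in\ga$ with $\mathrm{Fix}(f)=\{p,q\}$; for each $\sigma\in\ga$ the conjugate $\sigma^{-1}f\sigma$ is hyperbolic and hence fixes $p$, which forces $\sigma p\in\mathrm{Fix}(f)=\{p,q\}$, so the $\ga$-orbit of $p$ has at most two points and $\ga$ is elementary. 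In the triangle case each of the three sets is the fixed-point set of some hyperbolic element, so for every $\sigma\in\ga$ the three sets $\sigma\{a,b\},\sigma\{b,c\},\sigma\{c,a\}$ again lie in $\mathcal F$, whence $\sigma\{a,b,c\}\subseteq\{a,b,c\}$; thus $\{a,b,c\}$ is $\ga$-invariant and $\ga$ is elementary. Both alternatives contradict the hypothesis, so $\ga$ contains hyperbolic $f,g$ with $\mathrm{Fix}(f)\cap\mathrm{Fix}(g)=\emptyset$.

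Finally, taking $f$ with attracting fixed point $\alpha$ and $\mathrm{Fix}(f)=\{\alpha,\beta\}$, and $g$ with $\mathrm{Fix}(g)=\{\gamma,\delta\}$ disjoint from $\{\alpha,\beta\}$, I would set $g_n:=f^{n}gf^{-n}$ for $n\ge0$. Each $g_n$ is hyperbolic, since $Tr$ is invariant under conjugation, with $\mathrm{Fix}(g_n)=\{f^{n}\gamma,f^{n}\delta\}$, and by \tf{iterates} both of these points tend to $\alpha$ as $n\to\infty$. Since $\gamma,\delta\notin\mathrm{Fix}(f)$, the points $f^{n}\gamma$ $(n\ge0)$ are pairwise distinct and so are the $f^{n}\delta$, while each of the equalities $f^{n}\gamma=f^{m}\delta$ and $f^{n}\delta=f^{m}\gamma$ can hold for at most one value of $n-m$; hence $\mathrm{Fix}(g_n)\cap\mathrm{Fix}(g_m)\neq\emptyset$ only when $n-m$ belongs to a fixed finite set $B$ of nonzero integers. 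Choosing an infinite set $N$ of nonnegative integers such that no two of its elements differ by a member of $B$, the elements $g_n$ for $n\in N$ are infinitely many hyperbolic elements of $\ga$, no two of which have a common fixed point.

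The step I expect to be the genuine obstacle is the second one. The naive approach --- conjugate $f$ by some element that does not preserve $\mathrm{Fix}(f)$ --- may return a conjugate that still shares a fixed point with $f$, and attempting to repair this by further conjugations gets trapped inside the elementary subgroup generated by the elements in play; the star/triangle dichotomy is exactly what sidesteps this difficulty, reducing the whole step to the definition of an elementary group and the conjugation-invariance of $Tr$.
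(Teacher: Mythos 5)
The paper does not actually prove this statement: it appears in Section 2 among the background facts quoted without proof from the references \cite{Ber} and \cite{katok}, so there is no ``paper's own proof'' to compare against. Your argument is correct and self-contained. Step 1 (trace of $p^m q^n$ equals $2+mnts$, forcing a hyperbolic element when only elliptic and parabolic ones are assumed) and Step 3 (conjugating $g$ by powers of $f$ and discarding the finitely many differences $n-m$ that could create a coincidence of fixed points) are both standard and sound. Step 2, which you rightly identify as the crux, also holds up: the star/triangle dichotomy for a pairwise-intersecting family of two-element sets is elementary and correct, in the star case the orbit of the common point $p$ lands in $\mathrm{Fix}(f)=\{p,q\}$, and in the triangle case $\bigcup_{g}\mathrm{Fix}(g)\subseteq\{a,b,c\}$ is a finite $\ga$-invariant set; either way $\ga$ has a finite orbit in $\Hh$ and is elementary, a contradiction. (A small cosmetic point: the family need not contain all three edges of the triangle, but your argument only uses that it is contained in the triangle, so nothing is lost.) For comparison, the textbook route in Katok/Beardon handles the awkward case of two hyperbolic elements sharing exactly one fixed point by a separate dynamical or commutator lemma; your combinatorial dichotomy sidesteps that case entirely and reduces it to the definition of elementary, which is arguably cleaner, at the modest cost of the set-theoretic case analysis.
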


\begin{thm}
\label{th elementary}
A non-elementary subgroup $\ga$ of $\PR$ is discrete if and only if for each $T$ and $S$ in $\ga$, the group $<T,S>$ is
discrete.
\end{thm}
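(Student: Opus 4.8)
The plan is to prove the non-trivial implication by contradiction, reducing the non-discreteness of $\ga$ to a violation of Jørgensen's inequality inside a carefully chosen two-generated subgroup. The forward implication is immediate, since every subgroup of a discrete group is discrete.

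For the converse, suppose that $\langle T,S\rangle$ is discrete for all $T,S\in\ga$ but that $\ga$ itself is not discrete. Then some $h\in\ga$ is not isolated in $\ga$, so there exist $h_n\in\ga\setminus\{h\}$ with $h_n\to h$ in $\PR$; hence $g_n:=h_nh^{-1}$ is a sequence in $\ga\setminus\{I\}$ converging to $I$. Using \tf{th infinite hyperpolic}, I would fix three hyperbolic elements $f_1,f_2,f_3\in\ga$ whose fixed-point sets $F_1,F_2,F_3\subset\Rr$ are pairwise disjoint. For each $i$ the set of elements of $\PR$ that interchange the two points of $F_i$ is closed and does not contain $I$, so for all large $n$ the element $g_n$ interchanges the two points of no $F_i$.

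The key step is to show that, for each large $n$, at least one of the groups $\langle g_n,f_i\rangle$ is non-elementary. Suppose not. Each $\langle g_n,f_i\rangle$ contains the hyperbolic element $f_i$, so by \tf{elementary groups} together with \tf{th 4} it is either abelian with $g_n$ sharing $f_i$'s fixed-point set (type (1)), or it stabilizes the two-point set $F_i$ (type (2)), or it fixes a single point, which must then lie in $F_i$ (type (3)). In each case, since $g_n$ does not interchange the two points of $F_i$, it must fix at least one point of $F_i$. As $F_1,F_2,F_3$ are pairwise disjoint, $g_n$ would then fix at least three distinct points of $\Cc$, forcing $g_n=I$ and contradicting $g_n\neq I$. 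After passing to a subsequence I may therefore assume that $\langle g_n,f_{i_0}\rangle$ is non-elementary for a single index $i_0$ and all $n$; by hypothesis it is also discrete.

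Finally I would apply Jørgensen's inequality \cite{Ber} to the discrete non-elementary group $\langle g_n,f_{i_0}\rangle$, obtaining
$$|\mathrm{tr}^2(g_n)-4|+|\mathrm{tr}(g_nf_{i_0}g_n^{-1}f_{i_0}^{-1})-2|\geq 1$$
for every $n$. Since $g_n\to I$ and $f_{i_0}$ is fixed, we have $\mathrm{tr}^2(g_n)\to 4$ and $g_nf_{i_0}g_n^{-1}f_{i_0}^{-1}\to I$, so the left-hand side tends to $0$, which is a contradiction for $n$ large. Hence $\ga$ is discrete. I expect the non-elementarity step to be the main obstacle: one must be sure that the two-generated subgroup to which Jørgensen's inequality is applied is genuinely non-elementary, and this is precisely where the classification of elementary subgroups of $\PR$ and the disjointness of the three hyperbolic axes enter; everything after that is a routine continuity computation. (The elementary sub-cases could alternatively be closed by hand using the attracting/repelling dynamics of loxodromic and parabolic maps from \tf{iterates}, but the non-elementary case genuinely requires Jørgensen's inequality or an equivalent.)
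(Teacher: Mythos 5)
Your proof is correct. Note that the paper itself offers no proof of this statement: it appears in Section 2 among the background facts quoted from \cite{Ber} and \cite{katok}, so there is nothing internal to compare against. Your argument is essentially the standard one (Beardon, Theorem 5.4.2): use \tf{th infinite hyperpolic} to pick hyperbolic elements with disjoint fixed-point sets, show via the classification of elementary groups that a nontrivial $g_n$ close to $I$ cannot generate an elementary group with all of them (else it would fix three points), and then contradict J{\o}rgensen's inequality in the resulting discrete non-elementary two-generator group. The only external ingredient is J{\o}rgensen's inequality itself, which is not stated in the paper but is available in the cited reference; with that granted, every step checks out.
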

\begin{thm}

\label{th final}
Let $\ga$ be a non-elementary subgroup of $\PR$. Then the following are equivalent:
\begin{\enu}
\item $\ga$ is discrete;
\item $\ga$ acts properly discontinuously;
\item The fixed points of elliptic elements do not accumulate in $\BH$;
\item The elliptic elements do not accumulate to $I$;
\item Each elliptic element has a finite order.
\end{\enu}
\end{thm}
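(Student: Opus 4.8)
The plan is to establish $(1)\Leftrightarrow(2)$, $(2)\Rightarrow(3)$, $(2)\Rightarrow(4)$, $(1)\Rightarrow(5)$, together with the single substantial step that $\neg(1)$ implies each of $\neg(3),\neg(4),\neg(5)$; this closes the loop, giving $(3)\Rightarrow(1)$, $(4)\Rightarrow(1)$, $(5)\Rightarrow(1)$, and hence all five equivalences.

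The equivalence $(1)\Leftrightarrow(2)$ is \tf{th of dis}. For $(2)\Rightarrow(3)$, suppose some $w\in\BH$ were an accumulation point of the set $S$ of fixed points in $\BH$ of the elliptic elements of $\ga$; pick distinct $p_n\in S\setminus\{w\}$ with $p_n\to w$, say $p_n$ fixed by the elliptic element $E_n\in\ga$ (the $E_n$ are distinct, since an elliptic element of $\PR$ has a unique fixed point in $\BH$). Proper discontinuity gives, via \tf{th of neib}, a neighbourhood $V$ of $w$ with $T(V)\cap V\neq\emptyset\Rightarrow T(w)=w$ for all $T\in\ga$; since $p_n\in E_n(V)\cap V$ for $n$ large, this forces $E_n(w)=w$, whence $w=p_n$, a contradiction. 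For $(2)\Rightarrow(4)$, a sequence of distinct elliptic $E_n\to I$ would satisfy $E_n(z_0)\to z_0$ for fixed $z_0\in\BH$, hence $E_n(K)\cap K\neq\emptyset$ for all large $n$ with $K$ a fixed compact hyperbolic disc about $z_0$, again contradicting \tf{th of neib}. For $(1)\Rightarrow(5)$, an elliptic element of infinite order is conjugate in $\PR$ to a rotation of $\BH$ through an irrational multiple of $\pi$; its powers are distinct elements of $\ga$ whose rotation angles are dense modulo $2\pi$, so a subsequence of them tends to $I$ and $\ga$ is not discrete.

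The core claim is: \emph{a non-elementary, non-discrete $\ga$ is dense in $\PR$}. Indeed, non-discreteness gives distinct $g_n\in\ga\setminus\{I\}$ with $g_n\to I$, so the closure $\overline{\ga}$ of $\ga$ in $\PR$ is a closed subgroup that is non-discrete and, as $\ga\subseteq\overline{\ga}$, still non-elementary (a finite $\overline{\ga}$-orbit in $\Hh$ would split into finite $\ga$-orbits). By the structure of closed subgroups of $\PR$ — every positive-dimensional proper one lies in the stabiliser of a point of $\Hh$ or of a pair of points of $\Rr$, hence is elementary — we conclude $\overline{\ga}=\PR$. Now the elliptic elements form the open set $\{g:Tr(g)<4\}$, on which the fixed point in $\BH$ and the rotation angle depend continuously on $g$; density of $\ga$ then produces elliptic elements of $\ga$ arbitrarily close to $I$ (take $\ga$-elements near rotations of small angle), so $\neg(4)$; elliptic elements of $\ga$ with fixed points close to any prescribed point of $\BH$, so $S$ is dense in $\BH$ and $\neg(3)$ holds; and elliptic elements of $\ga$ of infinite order (the angles realised by $\ga$-elements near a fixed elliptic are dense in an interval, so some are irrational multiples of $\pi$), so $\neg(5)$.

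The main obstacle is this last structural input, the classification of closed subgroups of $\PR$. To avoid it one can instead reduce, via \tf{th elementary}, each implication $\neg(1)\Rightarrow\neg(k)$ to a two-generator subgroup $\langle g,h\rangle$ with $h$ hyperbolic (available by \tf{th infinite hyperpolic}); normalising $h$ to $z\mapsto\lambda z$ $(\lambda>1)$ and writing $g=\binom{a\ b}{c\ d}$, one has the identity $\mathrm{tr}\,[g,h^{n}]=2+bc\,(2-\lambda^{n}-\lambda^{-n})$, which for a sequence $g_n\to I$ yields elements close to $I$ with trace in $(-2,2)$, i.e.\ elliptic. Here the delicate case is $bc\le0$, where $[g_n,h]$ may fail to be elliptic: one then notes either that $g_n$ is itself elliptic (which occurs as soon as $(a_n+d_n)^2<4$), or replaces $h$ by another hyperbolic element with a different axis, abundantly supplied by \tf{th infinite hyperpolic} — conjugating only by \emph{fixed} elements of $\PR$ so that $g_n\to I$ persists. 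Checking that the elliptic elements so obtained are distinct and that their fixed points accumulate strictly inside $\BH$ is then routine, using \tf{th 4} and \tf{th6} to control which subgroups are elementary.
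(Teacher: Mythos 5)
First, a point of reference: the paper does not prove this statement at all --- it sits in the background Section~2 as a quoted result, with \cite{Ber} and \cite{katok} as sources (it is Theorem~8.4.1 of Beardon's book), so there is no proof of the author's to compare yours against. Judged on its own merits, most of what you write is sound: $(1)\Leftrightarrow(2)$ is \tf{th of dis}; your deductions of $(2)\Rightarrow(3)$ and $(2)\Rightarrow(4)$ from \tf{th of neib} are correct (using that an elliptic element of $\PR$ has a unique fixed point in $\BH$); $(1)\Rightarrow(5)$ is standard; and reducing the converse implications to the density of a non-elementary non-discrete $\ga$ in $\PR$ is a legitimate, if not self-contained, route. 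The closed-subgroup classification you invoke is correctly stated (the identity component of $\overline{\ga}$ is a positive-dimensional connected subgroup whose fixed-point set in $\Hh$ is a finite set preserved by $\overline{\ga}$), and density does yield $\neg(3)$ and $\neg(4)$ exactly as you say, since ellipticity is an open condition on which the fixed point and angle vary continuously.

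The genuine gap is in $\neg(1)\Rightarrow\neg(5)$, which is in fact the hard implication of the whole theorem. From density you conclude that the rotation angles realised by elliptic elements of $\ga$ are dense in an interval, ``so some are irrational multiples of $\pi$.'' That inference is invalid: the rational multiples of $\pi$ are themselves dense in every interval, so a dense set of realised angles may a priori consist entirely of angles of the form $2\pi k/m$, i.e.\ entirely of finite-order elements; for a countable $\ga$ the set of realised angles is countable, so no topological largeness short of containing an interval can force an irrational value. Your fallback in the last paragraph does not repair this: the identity $\mathrm{tr}\,[g,h^{n}]=2+bc\,(2-\lambda^{n}-\lambda^{-n})$ is indeed the right tool for manufacturing elliptic elements close to $I$ (hence for $\neg(4)$, modulo the sign cases you defer as ``routine''), but it gives no control whatsoever over the \emph{orders} of the elliptic elements it produces, so it says nothing about $\neg(5)$. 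Showing that a non-elementary non-discrete group must contain an elliptic element of infinite order requires a further, genuinely nontrivial argument --- this is precisely the content of the $(5)\Rightarrow(1)$ step in Beardon's proof --- and as written your proposal does not contain one.
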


\begin{defn}
Let $\ga$ be a Fuchsian group. Then the set of all possible limit points of $\ga$-orbits $\ga z$, $z\in\BH$
is called the \textit{limit set} of $\ga$ and denoted by $\Lambda(\ga)$.
\end{defn}

Since $\ga$ is discrete, the $\ga$-orbits do not accumulate in $\BH$. Hence $\Lambda(\ga)\subset \Rr$.

\begin{defn}
$ $
\begin{\enu}
\item If $\La=\Rr$, then $\ga$ is called a Fuchsian group of the first kind.
\item $\La\neq \Rr$, then $\ga$ is called a Fuchsian group of the second kind.
\end{\enu}
\end{defn}

\begin{thm}
\label{th limit set}
If $\La$ contains more than one point, it is the closure of the set of fixed points of the hyperbolic transformations
of $\ga$.
\end{thm}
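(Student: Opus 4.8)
The plan is to prove the two inclusions $\overline{H}\subseteq\La$ and $\La\subseteq\overline{H}$, where $H$ denotes the set of fixed points of the hyperbolic elements of $\ga$; the assertion of the theorem is then exactly $\La=\overline H$. The inclusion $\overline H\subseteq\La$ is routine: if $g\in\ga$ is hyperbolic with attracting fixed point $\al$ and repelling fixed point $\bt$, then by \tf{iterates}(2) we have $g^{n}z\to\al$ for every $z\in\BH$, so $\al\in\La$; applying this to $g^{-1}$, which is hyperbolic with attracting fixed point $\bt$, gives $\bt\in\La$. Hence $H\subseteq\La$, and since $\La$ is closed (it coincides with the set of accumulation points of a single orbit $\ga z_{0}$, and a short diagonal argument shows that this set is closed, no point of $\ga z_{0}\subset\BH$ being equal to a point of $\Rr$) we conclude $\overline H\subseteq\La$.

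For $\La\subseteq\overline H$, I first dispose of the case where $\ga$ is elementary. By the corollary following \tf{elementary groups}, an elementary Fuchsian group is either cyclic or conjugate in $\PR$ to the group generated by $z\mapsto lz$ $(l>1)$ and $z\mapsto-1/z$. If $\ga$ is cyclic with elliptic or parabolic generator, then $\La$ is empty or a single point, against the hypothesis; if the generator is hyperbolic, then every nontrivial element is hyperbolic with the same two fixed points, so $H$ is exactly that pair and $\La=H$. In the remaining dihedral-type case, a direct inspection of traces shows the only hyperbolic elements are the powers of $z\mapsto lz$, so $H=\{0,\infty\}=\La$ up to the conjugation. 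Thus in every elementary case compatible with $|\La|>1$ we already have $\La=H=\overline H$.

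Now assume $\ga$ is non-elementary and fix $w\in\La$; the goal is to produce fixed points of hyperbolic elements of $\ga$ converging to $w$. By \tf{th infinite hyperpolic}, pick two hyperbolic elements $f_{1},f_{2}\in\ga$ whose four fixed points --- $p_{1},q_{1}$ for $f_{1}$ and $p_{2},q_{2}$ for $f_{2}$ --- are pairwise distinct. Since $w\in\La$, there are a point $z_{0}\in\BH$ and distinct $\phi_{n}\in\ga$ with $\phi_{n}z_{0}\to w$. The decisive ingredient is the following standard fact about M\"{o}bius transformations, which I would record as a separate lemma in the preliminary section: \emph{after passing to a subsequence there is a point $v\in\Rr$ with $\phi_{n}z\to w$ uniformly on compact subsets of $\Cc\setminus\{v\}$.} Granting it, at least one of $p_{1},q_{1},p_{2},q_{2}$ --- say $\xi$, a fixed point of some $f_{i}$ --- differs from $v$, so $\phi_{n}\xi\to w$; but $\phi_{n}\xi$ is a fixed point of the hyperbolic transformation $\phi_{n}f_{i}\phi_{n}^{-1}$, hence $\phi_{n}\xi\in H$ for all $n$, and therefore $w\in\overline H$. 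Combined with the first inclusion, this yields $\La=\overline H$.

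The displayed fact is the one genuinely non-formal step and the part I expect to cost the most care. It is proved by a compactness argument: lift each $\phi_{n}$ to $A_{n}\in\SR$. If the entries of the $A_{n}$ stay bounded, a subsequence converges to a matrix $A$; were $\det A=1$, the $\phi_{n}$ would converge to a M\"{o}bius transformation and $\phi_{n}z_{0}$ would converge to a point of $\BH$, contradicting $\phi_{n}z_{0}\to w\in\Rr$, so $\det A=0$; then $z\mapsto A z$ is constant, equal to $w$, off its unique pole $v$, which lies in $\Rr$ because the denominator of $A z$ has real coefficients, and $v\neq z_{0}$ since $z_{0}\in\BH$, so the limiting constant is indeed $\lim\phi_{n}z_{0}=w$. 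If the entries of $A_{n}$ are unbounded, divide $A_{n}$ by its entry of largest modulus and run the same argument with the resulting nonzero rank-one real limit. With this lemma at hand, the proof reduces to the short argument above.
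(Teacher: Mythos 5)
This theorem appears in the paper's preliminary section as a quoted known result (the stated references are Beardon and Katok), and the paper supplies no proof of its own, so there is nothing internal to compare your argument against. What you have written is essentially the standard textbook proof (Katok, \emph{Fuchsian groups}, Theorem 3.4.4): the inclusion $\overline H\subseteq\La$ via \tf{iterates} and the closedness of $\La$; the elementary cases disposed of by the classification following \tf{elementary groups}; and the reverse inclusion by conjugating a fixed hyperbolic element along a sequence $\phi_n$ with $\phi_n z_0\to w$, using the fact that a sequence of distinct elements of a Fuchsian group subconverges to a constant map uniformly off a single exceptional point of $\Rr$. The argument is correct and complete. One minor logical slip worth fixing: in your compactness lemma, if the lifts $A_n\in\SR$ stay bounded then any subsequential limit $A$ automatically satisfies $\det A=\lim\det A_n=1$ by continuity of the determinant, so the bounded case is simply impossible (it contradicts $\phi_n z_0\to w\in\Rr$); the subcase ``bounded entries with $\det A=0$'' that you then analyse cannot occur, and the entire content of the lemma lies in the unbounded case, which you handle correctly by renormalizing by the largest entry. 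This does not affect the validity of the proof, and the remaining small assertions (base-point independence and closedness of $\La$, and that at least one of the four hyperbolic fixed points avoids the exceptional point $v$) are standard and correctly used.
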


Another model for the hyperbolic plane, is  the \textit{unit disc}
$$
\BU=\{z\in \BC, |z|<1\}.
$$
The  \mt
\begin{equation}\label{map1}
    \lambda(z)= \frac{z-i}{z+i}
\end{equation}
 is a homeomorphism from $\BH$ onto $\BU$, with the inverse map
\begin{equation}\label{map2}
    \lambda^{-1}(z)= -i\frac{z+1}{z-1}\,,
\end{equation}
$\lambda$ is called the {\textit Cayley transformation}. Note that $\lambda$  maps $\BR$ to  the \textit{principal circle} $\si=\{z\in\BC, |z|=1\}$, which is the Euclidean boundary of $\BU$.
\begin{thm}
\label{elliptic}
The group of automorphisms of $\BU$ is  $\lambda\PR \lambda^{-1}$, and its elements have the following form:
$$
     \frac{az+\bar{c}}{cz+\bar{a}},\mbox{ where }\; a,c\in\BC,\;a\bar{a}-c\bar{c}=1.
$$
\end{thm}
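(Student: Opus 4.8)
The plan is to transport the known identification $\mathrm{Aut}(\BH)=\PR$ across the Cayley transformation $\lambda$ and then make the resulting conjugate group explicit by a direct matrix computation. For the first step I would invoke the elementary principle that a biholomorphic bijection conjugates automorphism groups: if $\varphi:\BH\rightarrow\BU$ is biholomorphic with holomorphic inverse, then $f\mapsto\varphi\circ f\circ\varphi^{-1}$ is a group isomorphism from the group of conformal automorphisms of $\BH$ onto that of $\BU$. Applying this with $\varphi=\lambda$ from \rf{map1} (whose inverse \rf{map2} is again holomorphic), and using the fact recalled above that $\PR$ is exactly the automorphism group of $\BH$, we obtain at once $\mathrm{Aut}(\BU)=\lambda\PR\lambda^{-1}$. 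It remains only to read off the shape of a typical element of this conjugate group.

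For the computational step I would pass to matrix representatives. Lift $\lambda$ to $L=\binom{1\ -i}{1\ \ i}\in\GC$, which has $\det L=2i\neq 0$, so that $L^{-1}=\frac{1}{2i}\binom{i\ \ i}{-1\ 1}$ projects to $\lambda^{-1}$. Given $\g\in\PR$, choose a real representative $M=\binom{a'\ b'}{c'\ d'}$ with $a'd'-b'c'=1$. Carrying out the product $LML^{-1}$ and separating the real and imaginary parts of the entries, one finds
\begin{equation*}
LML^{-1}=\binom{a\ \ \bar c}{c\ \ \bar a},\qquad a=\frac{(a'+d')+i(b'-c')}{2},\qquad c=\frac{(a'-d')+i(b'+c')}{2}.
\end{equation*}
A short check of signs confirms that the two anti-diagonal entries really are the complex conjugates of $a$ and $c$. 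Since the determinant is invariant under conjugation, $a\bar a-c\bar c=\det(LML^{-1})=\det M=1$, so $\lambda\g\lambda^{-1}$ is the transformation $z\mapsto\frac{az+\bar c}{cz+\bar a}$ with $a\bar a-c\bar c=1$, as claimed.

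Finally, for the reverse inclusion I would note that the linear relations expressing $a,c$ in terms of $a',b',c',d'$ invert over $\BR$: given any $a,c\in\BC$ with $a\bar a-c\bar c=1$, setting $a'=\mathrm{Re}(a)+\mathrm{Re}(c)$, $d'=\mathrm{Re}(a)-\mathrm{Re}(c)$, $b'=\mathrm{Im}(a)+\mathrm{Im}(c)$, $c'=\mathrm{Im}(c)-\mathrm{Im}(a)$ produces a real matrix $M$ with $a'd'-b'c'=|a|^2-|c|^2=1$, hence $M\in\SR$ and $\lambda\g_M\lambda^{-1}$ is precisely the prescribed transformation. Thus every map of the stated form lies in $\lambda\PR\lambda^{-1}$, which completes the identification. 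The whole argument is essentially bookkeeping; the one place demanding a little care is tracking the factor $1/(2i)=-i/2$ and the signs of the imaginary parts so that the conjugacy pattern $\binom{a\ \bar c}{c\ \bar a}$ of the entries emerges correctly — this is really the only obstacle, and it is a mild one.
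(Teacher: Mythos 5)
Your proof is correct, and all the matrix bookkeeping checks out: with $L=\binom{1\ -i}{1\ \ i}$ one indeed gets $LML^{-1}=\binom{a\ \ \bar c}{c\ \ \bar a}$ with $a=\tfrac{1}{2}\bigl((a'+d')+i(b'-c')\bigr)$ and $c=\tfrac{1}{2}\bigl((a'-d')+i(b'+c')\bigr)$, and your inversion formulas for the reverse inclusion are consistent with these. Note that the paper offers no proof of this statement at all; it appears in the preliminary section as a known fact quoted from the standard references (Beardon, Katok), so there is nothing to compare against --- your argument, conjugating $\mathrm{Aut}(\BH)=\PR$ by the Cayley transform and computing the conjugate matrices explicitly, is the standard and complete way to establish it.
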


\section{Equivariant functions}
\label{3}
Let $\ga$ be a subgroup of $\PR$, $\mero$ be the set of all meromorphic functions  on $\BH$. Then we have an action of $\ga$ on $\mero$ given by
$$ \g*h=\g^{-1}h\g.
$$
The set of fixed points of this action is
$$\e=\{h\in \mero, \; \g*h=h\ \; \forall \g\in \ga\}=\{h\in \mero, \; \g h=h\g \;\;  \forall \g \in \ga\}.
$$

\begin{defn}$ $
An element $h$ of $\e$ is called \textit{an equivariant function} with respect to $\ga$, or a $\ga$-\textit{equivariant function}.
\end{defn}

 \begin{rem}
 $\eh$ will be the set of  holomorphic equivariant function.
\end{rem}

\begin{egs}
\
\begin{\enu}
\item The identity map is a trivial example of an equivariant functions, it will be denoted by $h_0$.
\item If $\ga$ is an abelian group, then $\ga$ is contained in $\e$.
\end{\enu}
Infinitely many non trivial examples will be constructed in  section \ref{construction of equi functions}.
\end{egs}

It turns out that these equivariant functions have very interesting mapping properties. One major question that arises is about the
size of the image set of an equivariant function. Our purpose is to give some necessary conditions for an equivariant function to be surjective, and since commuting with the action of the group is not a trivial property, it is natural that these conditions will involve some characteristics of the group, for example, the nature of the limit set $\La$ of $\ga$.

\begin{defn} $ $
\begin{\enu}
\item Let $\g$ be an element of $\SC$, we define $Fix(\g)$ to be the set  fixed points of $\g$
\item Let $\ga$ be a subgroup of $\PC$, we define $Fix(\ga,\Cc)$ to be the set of  fixed points of $\ga$. Notice that
 $Fix(\ga)$ is contained in $\Cc$.
\item Let $\ga$ be a subgroup of $\PR$, we define $Fix(\ga)$ to be the set of parabolic or hyperbolic fixed points of $\ga$. Notice that
 $Fix(\ga)$ is contained in $\Rr$.
\item Two points $z,w\in\Cc$ are said to be \textit{$\ga$-equivalent} if there exists $\g \in\ga$ such that $w=\g(z)$.
\end{\enu}
\end{defn}

\begin{thm}
Let $\ga$ be a discrete subgroup of $\PR$, $h\in \e$, $x \in Fix(\ga)\cap Im(h)$, $\g\in \ga$ with $\g x=x$. If $z\in \Cc$ is not a fixed point $\g$, then
$h^{-1}(\{z\})$ contains infinitely many non $\ga-$equivalent points.
\end{thm}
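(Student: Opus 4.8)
The plan is to produce explicit preimages of $z$ from the single preimage of $x$ together with equivariance, to spread those preimages out using the iteration dynamics of \tf{iterates}, and finally to rule out that they lie in finitely many $\ga$-orbits by invoking proper discontinuity (\tf{th of dis}, \tf{th of neib}). Throughout, $h$ is taken non-constant (a constant equivariant function is a point of $\Rr$ fixed by all of $\ga$, and then the statement is vacuous or false), so that $h\colon\BH\to\Cc$ is an open map; we may also assume $\g\neq I$, since otherwise no $z$ fails to be fixed by $\g$. As $\g$ fixes the point $x\in Fix(\ga)\subset\Rr$, it is parabolic or hyperbolic.

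First I would choose $w_0\in\BH$ with $h(w_0)=x$ and a relatively compact open $U\ni w_0$ with $\overline U\subset\BH$; by the open mapping theorem $h(U)$ contains a neighbourhood $V$ of $x$ in $\Cc$. Since $z$ is not fixed by $\g$, \tf{iterates} supplies $\eta\in\{\g,\g^{-1}\}\subset\ga$ with $\eta x=x$ and $\eta^{n}(z)\to x$ in $\Cc$ as $n\to\infty$: when $\g$ is parabolic take $\eta=\g$, and when $\g$ is hyperbolic take $\eta$ to be whichever of $\g,\g^{-1}$ has $x$ as its attractive fixed point (here $z$, not being a fixed point of $\g$, differs from the repulsive one). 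Then $\eta^{n}(z)\in V\subset h(U)$ for all large $n$, so we may pick $w_n\in U$ with $h(w_n)=\eta^{n}(z)$. Setting $v_n=\eta^{-n}(w_n)\in\BH$ and using the equivariance identity $h(\eta^{-n}w)=\eta^{-n}h(w)$, we obtain $h(v_n)=\eta^{-n}\eta^{n}(z)=z$, so $v_n\in h^{-1}(\{z\})$ for every large $n$.

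It remains --- and here is the crux --- to show these $v_n$ meet infinitely many $\ga$-orbits. Suppose not: then finitely many $\ga$-orbits cover $h^{-1}(\{z\})$, and by the pigeonhole principle there is a single $p\in\BH$ together with an infinite set of indices $n$ such that $v_n=\tau_n(p)$ with $\tau_n\in\ga$; hence $w_n=\eta^{n}(v_n)=\sigma_n(p)$ where $\sigma_n=\eta^{n}\tau_n\in\ga$. Now let $K=\overline U\cup\{p\}$, a compact subset of $\BH$. For each such $n$ we have $\sigma_n(p)=w_n\in\overline U\subseteq K$ and $p\in K$, so $\sigma_n(K)\cap K\neq\emptyset$; but $\ga$, being discrete, acts properly discontinuously (\tf{th of dis}), so by \tf{th of neib} only finitely many elements of $\ga$ send $K$ to a set meeting $K$. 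Thus some $\sigma\in\ga$ coincides with $\sigma_n$ for infinitely many of these $n$, which forces $w_n=\sigma(p)$ --- a single point of $\BH$ --- for infinitely many $n$. Taking two such indices $n_1<n_2$ gives $\eta^{n_1}(z)=h(\sigma(p))=\eta^{n_2}(z)$, i.e.\ $\eta^{\,n_2-n_1}(z)=z$. Since a non-identity parabolic or hyperbolic transformation is conjugate to $m_1$ or to some $m_k$ with $k\neq 0,1$, none of which has a periodic point outside its fixed-point set, $z$ is fixed by $\eta$, hence by $\g$ --- contradicting the hypothesis on $z$. This contradiction proves the theorem.

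The main obstacle is precisely this last step. Exhibiting infinitely many preimages of $z$ is almost automatic once one has a preimage of $x$ and the commutation of $h$ with $\g$; the difficulty is that the obvious family $v_n=\eta^{-n}(w_n)$ might a priori lie in a single $\ga$-orbit. What defeats this is the tension between the ``source'' points $w_n$ remaining confined to the fixed compact set $\overline U$ and the group elements $\eta^{n}$ that transport them escaping to infinity in $\ga$; proper discontinuity is exactly the hypothesis forbidding such behaviour. A minor point worth tracking is that $x$ or $z$ may equal $\infty$, which is the reason for working with $h$ as a map into $\Cc$ from the start.
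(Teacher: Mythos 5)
Your proof is correct and its core is the paper's: both produce the preimages $v_n=\eta^{-n}(w_n)$ of $z$ by pulling the iterates $\eta^{n}(z)\to x$ back through a neighbourhood of a point $w$ with $h(w)=x$, and both then invoke proper discontinuity to prevent these preimages from collapsing into finitely many orbits. The only divergence is the finishing move. The paper first notes that $w$ cannot be an elliptic fixed point (an elliptic element fixing $w$ would, by equivariance, fix $x\in\Rr$), shrinks the neighbourhood via part (3) of \tf{th of neib} so that it contains no two distinct $\ga$-equivalent points, and then shows the $v_n$ are \emph{pairwise} non-equivalent: an identification $\g^{-n}w_n=\al\g^{-m}w_m$ forces $\al=\g^{m-n}$, hence $z=\g^{m-n}z$, contradicting the hypothesis on $z$ unless $m=n$. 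You instead keep a generic relatively compact neighbourhood, apply part (2) of \tf{th of neib} to the compact set $\overline U\cup\{p\}$, and run two pigeonhole steps to reach the same contradiction $\eta^{n_2-n_1}(z)=z$. Your route spares you the elliptic-point observation and yields the (sufficient) weaker conclusion that the $v_n$ meet infinitely many orbits, while the paper's yields the sharper pairwise statement. Your opening caveat about constant $h$ is also well taken: the theorem tacitly assumes $h$ nonconstant, since the paper's argument likewise appeals to the local openness of $h$.
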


\begin{proof}
\
Suppose that $x=h(w)$ for some $w\in \BH$, since $x \in Fix(\ga)\subset\Rr$, $w$ is not an elliptic point. Indeed, if $\g w=w$ for some elliptic element $\g$ in $\ga$, then
$$
x=h(w)=h(\g w)=\g h(w)=\g x,
$$
and the elliptic element $\g$ will have a fixed point in $\Rr$, which is impossible.
Using \tf{th of neib} and the fact that $ w$ is not elliptic, we get an open neighborhood $V$ of $x$ containing no $\ga$-equivalent points. After intersecting $V$ with an open neighborhood obtained from the local openness of $h$ at $w$, we may assume that $h(V)$ is an open neighborhood of $x$.

 Suppose that $\g$ is parabolic and  $z\in \Cc-\{x\}$. Using \tf{iterates}, there exists $N>0$ such that $\g^n z \in h(V)$ for all $n>N$.
 Write $\g^nz=h(w_n)$ for some $w_n\in V$, then
$$
z=h(\g^{-n}w_n) \mbox{ for all } n>N.
$$

Suppose that for some $m,n>N$ we have
$$
\g^{-n}w_n=\al\g^{-m}w_m \mbox{ with } \al \in\ga,
$$
then $\g^{m-n}=\al$ since $V$ contains no $\ga$-equivalent points, and so $\al x=x$.
Also, we have $h(\g^{-n}w_n)=h(\al\g^{-m}w_m)$, which implies that $\g^{-n}h(w_n)=\al\g^{-m}h(w_m)$. This gives $z=\al z$ and hence $z=x$, which is impossible since $\{x\}$ is the set of fixed point of $\g$.

Suppose that $\g$ is hyperbolic. If $x$ is a repulsive point of $\g$, then it is an attractive one for $\g^{-1}$, which is also a hyperbolic element of $\ga$ verifying all the hypotheses of the theorem, and so we can always assume that $x$ is an attractive point for $\g$. Now, use \tf{iterates} and  mimic the proof of the first part to get the desired result.
\end{proof}

If in the above theorem $\ga$ was not an elementary group, then  we have:

 \begin{cor}
Let $\ga$ be  a non elementary discrete subgroup of $\PR$, $h\in \e$, $x \in Fix(\ga)\cap Im(h)$, $\g\in \ga$ with $\g x=x$. Then $h: \BH\ra \Cc$ is surjective. More precisely, for any $z\in \Cc$, $h^{-1}(\{z\})$ contains infinitely many non $\ga-$equivalent points.
\end{cor}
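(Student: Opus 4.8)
The plan is to deduce the Corollary from the preceding Theorem, the point being that for a \emph{fixed} $\g$ the only values $z$ not covered by that Theorem are the at most two fixed points of $\g$, and these can be dealt with by replacing $\g$ with a suitable conjugate. First I would record a structural fact: the image set $Im(h)$ is $\ga$-invariant. Indeed, if $y=h(w)$ with $w\in\BH$, then since $h$ is equivariant and every $\g'\in\ga$ maps $\BH$ into itself, $\g' y=\g' h(w)=h(\g' w)\in Im(h)$. In particular $\ga x\subseteq Im(h)$. Moreover, since $x\in Fix(\ga)$ we may take $\g$ parabolic or hyperbolic (such an element of $\ga$ fixing $x$ exists by the definition of $Fix(\ga)$), so that for every $\g'\in\ga$ the conjugate $\g'\g(\g')^{-1}$ is of the same type and fixes $\g' x$. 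Hence $\ga x\subseteq Fix(\ga)\cap Im(h)$.

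Next, fix $z\in\Cc$. If $z$ is not a fixed point of $\g$, the preceding Theorem gives directly that $h^{-1}(\{z\})$ contains infinitely many non $\ga$-equivalent points. Since $\g$ is parabolic or hyperbolic, $Fix(\g)$ is a one- or two-point subset of $\Rr\subset\Hh$, so only these exceptional values of $z$ remain. For such a $z$ I would invoke non-elementarity of $\ga$: by definition every $\ga$-orbit in $\Hh$ is infinite, hence $\ga z$ is infinite and cannot be contained in the finite set $Fix(\g)$; therefore there is $\g'\in\ga$ with $(\g')^{-1}z\notin Fix(\g)$, equivalently $z\notin\g'(Fix(\g))=Fix\big(\g'\g(\g')^{-1}\big)$.

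Finally, put $x'=\g' x$ and $\g''=\g'\g(\g')^{-1}$. Then $\g''\in\ga$, by the structural fact $x'\in Fix(\ga)\cap Im(h)$, and $\g'' x'=x'$ while $z$ is not a fixed point of $\g''$. Applying the preceding Theorem to the data $(h,x',\g'',z)$ then shows $h^{-1}(\{z\})$ contains infinitely many non $\ga$-equivalent points, which settles the remaining cases; surjectivity of $h: \BH\ra \Cc$ follows at once. I expect the only subtle step to be this conjugation trick — producing a group element that avoids a prescribed boundary point while keeping its fixed point inside $Im(h)$ — which works precisely because $Im(h)$ is $\ga$-invariant and, $\ga$ being non-elementary, every boundary orbit is infinite.
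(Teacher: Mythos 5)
Your proof is correct, and the overall strategy --- handling the exceptional values $z\in Fix(\g)$ by trading $\g$ for another element of $\ga$ whose fixed-point set avoids $z$ --- matches the paper's. The mechanics differ, though. The paper invokes Theorem~\ref{th infinite hyperpolic} to produce a hyperbolic $\g_1\in\ga$ sharing no fixed point with $\g$, applies the preceding theorem once with the original data $(x,\g)$ to conclude that the fixed points of $\g_1$ lie in $Im(h)$, and then applies it a second time with a fixed point of $\g_1$ in the role of $x$ and $\g_1$ in the role of $\g$. You instead conjugate $\g$ inside $\ga$: you observe that $Im(h)$ is $\ga$-invariant (so $\g'x$ stays in $Fix(\ga)\cap Im(h)$) and that, $\ga$ being non-elementary, the orbit $\ga z$ of the boundary point $z$ is infinite and hence cannot be contained in the finite set $Fix(\g)$, which yields a conjugate $\g'\g(\g')^{-1}$ not fixing $z$. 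Both routes are legitimate; the paper's leans on the nontrivial geometric fact that a non-elementary group contains infinitely many hyperbolic elements with pairwise disjoint fixed-point sets, while yours uses only the definition of non-elementarity (no finite orbit in $\Hh$) together with equivariance, and is in that sense more self-contained. One small point worth making explicit in your write-up: the hypothesis $\g x=x$ with $x\in Fix(\ga)\subset\Rr$ forces $\g$ (assumed $\neq I$) to be parabolic or hyperbolic, since an elliptic element of $\PR$ has no real fixed point; this is what guarantees that $Fix(\g)$ has at most two points and that $\g'\g(\g')^{-1}$ is again parabolic or hyperbolic, so that $\g'x$ indeed lies in $Fix(\ga)$.
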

\begin{proof}
\
 We only need to show that $h^{-1}(\{z\})$ contains infinitely many non $\ga-$equivalent points when $z$ is a fixed point of $\gamma$.  By
 \tf{th infinite hyperpolic}, $\ga$  contains infinitely many hyperbolic elements no two of which have a common fixed point.  Take any hyperbolic element $\g_1\in \ga-\{\g\}$, since $\g$ and $\g_1$ have no  common fixed point, the pre-image of any fixed point of $\g_1$ will contain infinitely many non $\ga$-equivalent points. In particular $\g_1$ verifies the conditions of the above theorem, and so $h^{-1}(\{z\})$ contains infinitely many non $\ga-$equivalent points when $z$ is a fixed point of $\gamma$( since $z$ is  not  a fixed point of $\g_1$).
\end{proof}

  When $\ga$ is a non elementary Fuchsian group, the closure of the set of fixed points of the hyperbolic transformations of $\ga$ will be  $\La$. Therefore, if the image of $h$ contains one point in $\La$, then it will contain infinitely many hyperbolic fixed points of $\ga$. As a consequence, we have

\begin{cor}
\label{cor1}
Let $\ga$ be a non elementary Fuchsian subgroup of $\PR$, $h\in \e$. If $ \La \cap Im(h)$ is non empty, then $h: \BH\ra \Cc$ is surjective, and for any $z\in \Cc$ the set $h^{-1}(\{z\})$ contains infinitely many non $\ga-$equivalent points.
\end{cor}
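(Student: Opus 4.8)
The plan is to reduce the statement to the preceding corollary by manufacturing, from the hypothesis $\La\cap Im(h)\neq\emptyset$, a point $x\in Fix(\ga)\cap Im(h)$ together with an element of $\ga$ fixing it. The mechanism is that $Im(h)$ is an \emph{open} subset of $\Cc$, whereas \tf{th limit set} presents $\La$ as the \emph{closure} of a set consisting of points of $Fix(\ga)$; a non-empty open set meeting a closed set must already meet any set whose closure that set is.

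First I would observe that $h$ is non-constant. Indeed, if $h\equiv c$ for some $c\in\Cc$, then $\g c=c$ for every $\g\in\ga$, so either $c\in\Rr$ and $\{c\}$ is a finite $\ga$-orbit in $\Hh$, or $c\in\BH$ and every element of $\ga$ fixes $c$; in either case $\ga$ would be elementary, contrary to assumption. Being a non-constant meromorphic function on $\BH$, $h$ is an open map into $\Cc$, so $Im(h)$ is open.

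Then I would pick $p\in\La\cap Im(h)$ and an open neighbourhood $U$ of $p$ with $U\subseteq Im(h)$. Since $\ga$ is non-elementary it contains infinitely many hyperbolic elements no two of which share a fixed point (\tf{th infinite hyperpolic}), so $\La$ has more than one point and \tf{th limit set} applies: $\La$ is the closure of the set of fixed points of the hyperbolic elements of $\ga$. Hence $U$ contains a fixed point $x$ of some hyperbolic $\g\in\ga$, and then $x\in Fix(\ga)\cap Im(h)$ with $\g x=x$. Applying the previous corollary to this $x$ and $\g$ yields both that $h:\BH\ra\Cc$ is surjective and that $h^{-1}(\{z\})$ contains infinitely many non $\ga$-equivalent points for every $z\in\Cc$.

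The only step needing any care is the openness of $Im(h)$, which hinges on the short verification that an equivariant function for a non-elementary group cannot be constant; after that the argument is a direct invocation of \tf{th limit set} and of the preceding corollary, so I anticipate no substantial obstacle.
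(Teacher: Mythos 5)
Your proof is correct and follows essentially the same route as the paper: the paper's justification is precisely the observation that $\La$ is the closure of the hyperbolic fixed points (\tf{th limit set}), so an open image meeting $\La$ must contain such a fixed point, after which the preceding corollary applies. You merely make explicit the openness of $Im(h)$ via the non-constancy of $h$, which the paper leaves implicit.
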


\section{Meromorphic equivariant functions of non discrete subgroups of $\PR$}
Our aim here is to give a complete description of $\e$ and $\eh$.

We begin by the following useful lemma, which can be directly deduced from the definition of an equivariant function.
\begin{lem}
Let $\ga$ be a subgroup of $\PR$, $h\in \e$, and $\al \in \PR$. Then
$$\al^{-1}h\al \in \mathcal{E}(\al^{-1}\ga \al).
$$
\end{lem}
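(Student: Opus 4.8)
The statement to prove is essentially a conjugation-covariance property: if $h$ is $\ga$-equivariant and $\al \in \PR$, then $\al^{-1}h\al$ is $(\al^{-1}\ga\al)$-equivariant. The plan is to unwind the definition of equivariance and perform a direct verification. First I would note that $\al^{-1}h\al$ is meromorphic on $\BH$: since $\al$ is a Möbius transformation preserving $\BH$ (an automorphism of $\BH$), pre- and post-composing the meromorphic function $h$ with the biholomorphisms $\al$ and $\al^{-1}$ yields again a meromorphic function on $\BH$, so $\al^{-1}h\al \in \mero$ and it makes sense to ask whether it lies in $\mathcal{E}(\al^{-1}\ga\al)$.

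Next I would take an arbitrary element of $\al^{-1}\ga\al$, which has the form $\al^{-1}\g\al$ for some $\g \in \ga$, and check that it commutes with $\al^{-1}h\al$. The computation is
\begin{equation*}
(\al^{-1}\g\al)(\al^{-1}h\al) = \al^{-1}\g h\al = \al^{-1}h\g\al = (\al^{-1}h\al)(\al^{-1}\g\al),
\end{equation*}
where the middle equality uses precisely the hypothesis $\g h = h\g$ (valid for every $\g \in \ga$, hence in particular for this one), and the outer equalities are just cancellation of $\al\al^{-1}$. Since $\al^{-1}\g\al$ ranges over all of $\al^{-1}\ga\al$ as $\g$ ranges over $\ga$, this shows $\al^{-1}h\al$ commutes with every element of $\al^{-1}\ga\al$, which is exactly the condition for membership in $\mathcal{E}(\al^{-1}\ga\al)$.

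There is essentially no obstacle here — the lemma is a formal consequence of the definition, and the only point requiring a word of care is the preliminary remark that composition with the automorphism $\al$ of $\BH$ keeps us inside $\mero$ (so that the associativity/cancellation manipulations above are taking place among honest self-maps of $\BH$ up to the meromorphic points, and the equivariance identity is an identity of meromorphic functions). I would state the proof in two short sentences: one establishing $\al^{-1}h\al \in \mero$, and one displaying the commutation identity above.
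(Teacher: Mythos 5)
Your proof is correct and is exactly the direct verification the paper has in mind; the paper simply asserts the lemma "can be directly deduced from the definition" and omits the computation you supply. The conjugation identity $(\al^{-1}\g\al)(\al^{-1}h\al)=\al^{-1}h\g\al=(\al^{-1}h\al)(\al^{-1}\g\al)$ together with the observation that $\al^{-1}h\al\in\mero$ is all that is needed.
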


In our investigation of $\e$, we start first with  the  case when  $\ga$ is a non discrete non elementary subgroup of $\PR$.

\begin{thm}\label{th classification 1}
Let $\ga$ be a non discrete non elementary subgroup of $\PR$. Then
$$\e=\{h_0\}.$$
\end{thm}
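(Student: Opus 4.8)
The strategy is to exploit the density of limit points together with the equivariance relation to force $h$ to agree with the identity on a large set, and then invoke the identity theorem for meromorphic functions. Since $\ga$ is non discrete and non elementary, by \tf{th final} the elliptic elements of $\ga$ accumulate to the identity; moreover $\ga$ contains elliptic elements of infinite order, so the set of their fixed points in $\BH$ has an accumulation point in $\BH$. I would first show that if $\g$ is any elliptic element with fixed point $z_0\in\BH$, then $h(z_0)=z_0$: from $\g h = h\g$ we get $\g h(z_0) = h(\g z_0) = h(z_0)$, so $h(z_0)$ is a fixed point of $\g$ in $\Cc$; one fixed point is $z_0\in\BH$ and the other is $\bar z_0\notin\BH$, and since $h$ maps $\BH$ into $\BH\cup\{\text{poles}\}$... here one must be slightly careful because $h$ is only meromorphic.

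The cleaner route is to pass to the unit disc via the Cayley transform $\lambda$ of \rf{map1} and use the previous lemma: $\lambda h \lambda^{-1}$ is equivariant for $\lambda\ga\lambda^{-1}\subset\mathrm{Aut}(\BU)$. Working in $\BU$, an elliptic element fixing $0$ is a rotation $z\mapsto e^{i\theta}z$; equivariance forces $h(e^{i\theta}z) = e^{i\theta}h(z)$ for all such $\theta$ occurring in the group. Because $\ga$ is non discrete non elementary, after conjugation one can arrange that $\ga$ contains elliptic elements whose rotation angles $\theta$ form a dense subset of $\BR/2\pi\BZ$ (using \tf{th6} and \tf{th final}: non discreteness means the elliptic subgroup fixing a given point is dense in the rotation group, or elliptic fixed points accumulate). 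Then $h$ commutes with a dense set of rotations about a point, hence with all of them, and a meromorphic function commuting with all rotations about $0$ must be of the form $h(z) = cz$ for a constant... wait, more precisely $h(e^{i\theta}z)=e^{i\theta}h(z)$ for all $\theta$ forces, by comparing Laurent/Taylor expansions, $h(z)=cz$; then equivariance with a further hyperbolic element (which exists by \tf{th infinite hyperpolic}) pins down $c=1$.

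In more detail, the key steps in order are: (1) reduce to $\BU$ and to the case where $\ga$ contains the full rotation group $\{z\mapsto e^{i\theta}z : \theta\in\BR\}$ in its closure, which it does after conjugating so that some elliptic fixed point is $0$ and using non discreteness; (2) from $h(e^{i\theta}z)=e^{i\theta}h(z)$ for $\theta$ dense, deduce by continuity of the relation (for fixed $z$, both sides are analytic/continuous in the relevant sense) that it holds for all $\theta$, then expand $h(z)=\sum a_n z^n$ (near $0$, where $h$ is finite since $h(0)=0$) to get $a_n e^{in\theta} = a_n e^{i\theta}$ for all $\theta$, forcing $a_n=0$ for $n\neq 1$, i.e. $h(z)=a_1 z$ on a neighborhood of $0$, hence everywhere by analytic continuation; (3) take a hyperbolic $\g_1\in\ga$, not fixing $0$; write $\g_1(z)=\frac{az+\bar c}{cz+\bar a}$ with $c\neq 0$; the relation $h\g_1 = \g_1 h$ with $h(z)=a_1z$ becomes $a_1\frac{az+\bar c}{cz+\bar a} = \frac{a\,a_1 z+\bar c}{c\,a_1 z+\bar a}$, and comparing gives $a_1=1$; (4) conclude $h=\mathrm{id}$, i.e. $\e=\{h_0\}$.

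**Main obstacle.** The delicate point is step (1)–(2): justifying that $h$ genuinely commutes with a dense set of rotations and that "commuting with a dense set of rotations" upgrades to "commuting with all rotations" for a meromorphic $h$. One must argue that for fixed $z$ with $h(z)$ finite, the map $\theta\mapsto h(e^{i\theta}z)$ is continuous, so the identity $h(e^{i\theta}z)=e^{i\theta}h(z)$ valid on a dense set of $\theta$ extends by continuity; and one must handle the possibility that $h$ has poles (but near the common fixed point $0$, $h(0)=0$ is forced by the argument above, so $h$ is holomorphic near $0$ and the Taylor expansion argument applies cleanly). A secondary subtlety is verifying that non discreteness of a non elementary $\ga$ really yields elliptic rotation angles dense in the circle about a single point — this follows from \tf{th final}(iv): elliptic elements accumulate at $I$, and elements accumulating at $I$ in $\PR$ that are elliptic must, after suitable bookkeeping, share (or have nearby) fixed points, giving a non discrete group of rotations about a point, whose closure is the whole rotation circle. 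Once this geometric input is in hand, the rest is a short computation.
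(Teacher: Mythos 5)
Your proposal is correct and its skeleton coincides with the paper's: conjugate into the unit disc by the Cayley transform, exploit an elliptic element of infinite order of $\ga$ (obtained from Theorems~\ref{th elementary} and~\ref{th final}) realized as an irrational rotation $\al(z)=kz$, $k=e^{2\pi i\theta}$, about $0$, conclude that the conjugated function is linear, and then use a second element of the non-elementary group not fixing $0$ to force the coefficient to equal $1$. Where you genuinely differ is the middle step: you extend $h(e^{i\theta}z)=e^{i\theta}h(z)$ from the dense set of angles $\{n\theta\}$ to all angles by continuity and then compare power-series coefficients, whereas the paper differentiates $g(kz)=kg(z)$ to get $g'(kz)=g'(z)$ and applies the identity theorem to $g'-g'(x)$ along a subsequence $\al^{a_n}\to I$. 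Both are valid, but your route can be streamlined so that the density-plus-continuity argument you flag as the ``main obstacle'' disappears: for the single irrational rotation, the Laurent expansion of $h$ at $0$ already gives $a_n(k^{n-1}-1)=0$ with $k^{n-1}\neq 1$ for $n\neq 1$, hence $a_n=0$ for all $n\neq 1$ directly; this also disposes of the possibility $h(0)=\infty$, which your assertion that $h(0)=0$ glosses over (a priori $h(0)$ is only a fixed point of $\al$ in $\Cc$, i.e.\ $0$ or $\infty$, but the vanishing of the negative-index coefficients rules out a pole). Your final computation with a hyperbolic $\g_1$ (which exists by \tf{th infinite hyperpolic} and automatically has $c\neq 0$ since its fixed points lie on the unit circle) yields $a_1=1$ in one stroke, slightly more directly than the paper's derivation of $a=\pm1$ followed by the exclusion of $a=-1$.
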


\begin{proof}
  Let $h\in \e$. By \tf{th elementary} and \tf{th final}, $\ga$ contains an elliptic element $\g$ of infinite order.
By the above lemma and  the transitivity of  the action of $\PR$ on $\BH$,  we can suppose that $\g$ fixes $i$. If $\lambda$ is the Cayley transformation, see Equation \ref{map1}, then the map
$$ g=\lambda h \lambda^{-1}:\BU \ra \Cc
$$
commutes with the action of the group $\ga_1= \lambda\ga \lambda^{-1}$ on the unit disc $\BU$ . Moreover, the element $\al=\lambda \g \lambda^{-1}\in\ga_1$ is elliptic of infinite order and fixing $0$. Hence, by \tf{elliptic}, we have
$$\al(z)=kz\,,~~k=e^{2\pi i\theta}~  (\theta \mbox{ is irrational}).
$$
As in the proof of \tf{th final}, we can construct a sequence $\{\al^{a_n}, \; a_n\in\BN\}$ such that  $\al^{a_n}\ra I$ as $n\ra\infty$.

Since $\al \in \ga_1$, we have $g\al=\al g$,\ie
$$
g(kz)=kg(z)\mbox{ for all } z\in\BU.
$$
Differentiating both sides yields
$$
g'(kz)=g'(z) \mbox{ for all } z\in\BU.
$$

Now, let  $x\in \BU-\{0\}$ be any point which is not a pole of $g$, $G$ be  the meromorphic function $g^{'}-g'(x)$, and $x_n=\al^{a_n}x=k^{a_n}x$. We have
$$
G(x)=0,~G(x_n)=0, \mbox{ and } x_n\ra x \mbox{ as } n\ra\infty.
$$
Therefore, $G$ is meromorphic on $\BU$ with its set of zeros having an accumulation point in $\BU$. It follows that
 $G$ must be a constant function, that is, for some $a,b\in\BC$
$$
g(z)=az+b\,, \ z\in\BU \, ,
$$
then  commutativity of $g$ with $\al$ implies that $b=0$.

Since $\ga$ is not elementary, $\ga_1$ contains an element $\bt$ which fixes neither $0$ nor $\infty$. If $y$ is a fixed point of $\bt$,
then the commutativity of $\bt$ and $g$ implies that  $g(y)=ay$ is also a fixed point of $\bt$. Iterating this, one shows that $a^2y$ is also a fixed point of $\bt$.
Therefore, two of the complex numbers $y$, $ay$, $a^2y$ must be equal. Since $y$ is neither 0 nor $\infty$, one easily sees that $a=\pm 1$.
The case $a=-1$ is  excluded, otherwise we will have
$$
\g_1(-z)=-\g_1(z)\,,\  z\in\BU, \mbox{for all}\, \g_1\in\ga_1.
$$
 This implies that $\g_1(0)=0$ for all  $\g_1\in\ga_1$,
which impossible since $\ga_1$ is not elementary. Hence $g$ is equal to the identity map, and so that $h=h_0$.
\end{proof}

 Now suppose that $\ga$ is a non discrete elementary subgroup of $\PR$. Then by \tf{elementary groups}, the set $Fix(\ga,\Cc)$ of fixed points of $\ga$  contains at most two points when $\ga$ is abelian.

\begin{thm}\label{th classification 2}
Let $\ga$ be a non discrete elementary subgroup of $\PR$. Then
\begin{enumerate}
\item If $\ga$ is abelian, then $$\e=Fix(\ga,\Cc)\, \cup \, \big\{ \g \in \PC;\, Fix(\g)=Fix(\ga,\Cc) \, \big\}.$$
\item If $\ga$ is not abelian, then $$\e= \big\{ h_0 \big\},$$  or there exists $\g$ in $\PR$ such that $$\e= \big\{ h_0,\, \g^{-1}(-h_0)\g \big\}.$$
\end{enumerate}

\end{thm}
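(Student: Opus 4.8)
The plan is to split into the two cases and in each case reduce, via the conjugation lemma, to one of the three normal forms for elementary groups supplied by \tf{elementary groups}. The inclusions "$\supseteq$" are immediate: if $\ga$ is abelian then any $\g\in\PC$ with $Fix(\g)=Fix(\ga,\Cc)$ commutes with every element of $\ga$ by \tf{th 4} (applied in $\PC$, noting that two non-identity Möbius maps with the same fixed-point set commute), and any constant function with value in $Fix(\ga,\Cc)$ is obviously equivariant; the real work is the reverse inclusion. So fix $h\in\e$, i.e. $h$ meromorphic on $\BH$ with $\g h=h\g$ for all $\g\in\ga$.

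For case (1), $\ga$ abelian. Conjugating by a suitable $\al\in\PR$ and using the lemma, I would bring $\ga$ to one of the standard abelian shapes: elements $z\mapsto lz$ fixing $\{0,\infty\}$ (the loxodromic/hyperbolic and the elliptic abelian cases, after Cayley transform when the fixed point is in $\BH$), or elements $z\mapsto z+t$ fixing $\{\infty\}$ (the parabolic case). The key point is that $\ga$ non discrete forces a one-parameter family of such transformations — in the multiplicative case the multipliers $l$ accumulate (so $\{l : lz\in\ga\}$ is a dense subgroup of $\BR^{*}_{+}$ or of the unit circle), in the additive case the translation lengths accumulate. Then, exactly as in the proof of \tf{th classification 1}: write the commutation relation $h(lz)=l\,h(z)$ (resp. $h(z+t)=h(z)+t$), differentiate to kill the nonhomogeneous effect, and use that $h'$ (resp.\ a derivative combination) takes a fixed value on a set with an accumulation point in $\BH$, hence is constant. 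This yields $h(z)=az+b$ (resp.\ $h(z)=z+b$) — a Möbius map or a constant — and feeding this back into the commutation relation pins down which Möbius maps survive: precisely those fixing $Fix(\ga,\Cc)$, together with the constants valued in $Fix(\ga,\Cc)$. One has to handle the sub-case $Fix(\ga,\Cc)$ a single point (parabolic/a single multiplicative fixed point is impossible for abelian of this type, but the parabolic-only group fixes just $\infty$) separately from the two-point case, but the mechanism is the same.

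For case (2), $\ga$ non abelian elementary and non discrete: by \tf{elementary groups} it is conjugate either to a group generated by $z\mapsto -1/z$ and dilations $z\mapsto lz$ ($l>1$), or to a group of affine maps $z\mapsto az+b$ containing both parabolic and hyperbolic elements. In the affine case, the hyperbolic--parabolic mixture already forces, via the argument in \tf{th classification 1} (use the parabolic subgroup to get $h$ affine, then use a hyperbolic element to force the linear coefficient to be $\pm1$), that $h=h_0$ or $h$ is the conjugate of $-h_0$; but $-h_0$ does not commute with a genuine parabolic $z\mapsto z+t$ unless that translation is absent, so in the affine case one gets exactly $\e=\{h_0\}$. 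In the dihedral-type case (generated by $-1/z$ and dilations, conjugated into $\BU$-form if one prefers), the dilation subgroup — non discrete, hence with dense multiplier set — again forces $h(z)=az$ by the differentiate-and-identity-theorem step, and then commuting with $z\mapsto -1/z$ gives $a(-1/z)=-1/(az)$, i.e. $a^{2}=1$, so $a=\pm1$; both $h_0$ and $z\mapsto -z\ (=\g^{-1}(-h_0)\g$ with $\g$ the conjugator) do commute with the whole group here. Conjugating back by the original $\al\in\PR$ converts $-h_0$ into $\g^{-1}(-h_0)\g$, giving the stated two-element set. The main obstacle I anticipate is purely bookkeeping: correctly tracking, through the $\PR$- (and Cayley-) conjugations, which of the two structural alternatives produces the singleton and which the doubleton, and making sure that in the non-abelian affine case the element $\g^{-1}(-h_0)\g$ is genuinely excluded — i.e. that the presence of a nontrivial parabolic really does kill it — so that the dichotomy in the statement is exactly as written.
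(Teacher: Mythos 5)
Your proposal is correct and follows essentially the same route as the paper: reduce via the conjugation lemma and \tf{elementary groups} to the standard normal forms, use non-discreteness to produce elements accumulating at the identity, differentiate the commutation relation and invoke the identity theorem to force $h$ to be affine, and then let commutativity with specific parabolic, hyperbolic, or order-two elliptic elements pin down $h$ exactly as in the statement. The only cosmetic differences are that you phrase non-discreteness as density of the multiplier set (and, in the dihedral case, implicitly use that the index-two dilation subgroup of a non-discrete group is non-discrete, where the paper instead argues directly that a sequence tending to $I$ contains only finitely many involutions $z\mapsto b/z$), and that you spell out the easy inclusion via \tf{th 4}, which the paper leaves as ``clear.''
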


\begin{proof}
Suppose that $\ga$ is abelian. If $\ga$  contains  beside the identity only elliptic elements, then $Fix(\ga,\Cc)=\{\al,\, \overline{\al} \}$ for some  $\al \in \BH$. Thus the constant functions $h(z)=\al$ and $h(z)=\overline{\al}$ are in $\e$.
As usual,  suppose that we are working in the unit disc model and that $\al=0$. Then any $\g \in \ga$  has the form $\g(z)=kz$ for some $ k \in \BC$.

Let  $h$ be an element of $ \e$. Since $\ga$ is not discrete, we can construct a sequence $\{\g_n\}$ of elements of $\ga$ such that  $\g_n\ra I$ as $n\ra\infty$, and using the same method
as in the proof of the above theorem, we find that for some $ a \in \BC$,
$$
h(z)=az~,~~   z\in \BU,
$$
and so,  in the upper half plane model, $h$ is an element of
$$\big\{ \g \in \PC;\, Fix(\g)=Fix(\ga,\Cc) \, \big\}.$$
Hence
$$
\e \subseteq Fix(\ga,\Cc)\cup \big\{ \g \in \PC;\, Fix(\g)=Fix(\ga,\Cc) \, \big\}.
$$
The other inclusion is clear.

If $\ga$  contains  beside the identity only parabolic elements, then $Fix(\ga,\Cc)=\{\al \}$ for some  $\al \in \Rr$. Thus the constant function $h(z)=\al$ is in $\e$. After a conjugation in $\PR$  we can take $\al=\infty$, and so any $\g \in \ga$ will have the following form
$$
\g(z)=z+b~,~~ z\in \BH
$$
 for some $ b \in \BR$.  As in the proof of the preceding theorem, we get
$$
h(z)=cz+d~,~~   z\in \BH
$$
for some $ c,d \in \BC$.
The commutativity of $h$ with the elements of $\ga$ implies that $c=1$. Hence $h$ has the desired form, and we conclude as in the preceding case.

If $\ga$  contains  beside the identity only hyperbolic elements, then $Fix(\ga,\Cc)=\{\al, \bt \}$ for some  $\al, \bt \in \Rr$. Thus the constant functions $h(z)=\al$ and $h(z)=\bt$ are in $\e$. After a conjugation in $\PR$, we can take $\al=\infty$ and $\bt=0$, and so any $\g \in \ga$ will have the following form
$$
\g(z)=az~,~~ z\in \BH
$$
 for some $ a \in \BR$.  As in the proof of the preceding theorem, we get
$$
h(z)=cz+d~,~~   z\in \BH
$$
for some $ c,d \in \BC$.
The commutativity of $h$ with the elements of $\ga$ implies that $d=0$, and we conclude as in the preceding case.

Now, Suppose that $\ga$ is not abelian. Then by \tf{elementary groups}, we have the following  two cases:

$\bullet$ $\ga$  is conjugated  in $\PR$ to a group whose elements have the form $az+b$, $a,b$ in $\BR$, and contains both parabolic and  hyperbolic elements. Similarly, we find that
$$
h(z)=cz+d~,~~   z\in \BH
$$
for some $ c,d \in \BC$.
The commutativity of $h$ with the parabolic and  hyperbolic elements of $\ga$ implies that $c=1$ and $d=0$, that is $h=h_0$, hence the the first part of the result.

$\bullet$  $\ga$  is conjugated in $\PR$ to a group generated by $h(z)=-1/z$ and by elements of the form $g(z)=lz$ ($l>1$).
This implies that an element $\g \in\ga$ will be either of the form
$$
\g(z)=az~ \mbox{ for all } z\in \BH, \mbox{ for some } a \in \BR,
$$
and so it is a hyperbolic.
Or of the  form
$$
\g(z)=b/z~\mbox{ for all } z\in \BH, \mbox{ for some } b \in \BR-\{0\},
$$
hence it is an elliptic element of order two. Since $\ga$ is not discrete, we have a sequence $\{\g_n\}$ of elements of $\ga$ such that $\g_n\ra I$ as $n\ra\infty$. If $\{\g_n\}$ contains infinitely many elliptic elements, then it has a subsequence $\{\g_{n_m}\}$ with
$$
\g_n(z)=b_{n_m}/z~\mbox{ for all } z\in \BH, \mbox{ for some } b_{n_m} \in \BR-\{0\},
$$
such that $\g_{n_m}\ra I$ as $m\ra\infty$. Then, for all $z\in \BH$ we have $\g_{n_m}z\ra z$ as $m\ra\infty$, \ie\, $b_{n_m}\ra z^2$ as $m\ra\infty$ for all $z\in \BH$, which is absurd. Hence $\{\g_n\}$ contains only finitely many elliptic elements, thus we may assume that all elements in $\{\g_n\}$ are hyperbolic.
Finally, as in the proof of the preceding theorem, we find that for some $c \in \BC$
$$
h(z)=cz~ \mbox{ for all } z\in \BH.
$$
and the commutativity  with the elliptic elements shows that $c=\pm 1$. Hence $h=\pm h_0$ as desired.

\end{proof}

\section{Holomorphic equivariant functions of Fuchsian group of the first kind}

Unlike the non discrete case, the classification of holomorphic equivariant functions when $\ga$ is a Fuchsian groups presents
more challenges and the proof is more elaborate. In addition to the equivarience and the holomorphy of the functions, we have to use
some properties of the equivarience group, namely the limit set.

 The fact that the limit set of a Fuchsian group of the first kind $\ga$ is $\Rr $ enables us to show that $\eh=\{h_0\}$. But in the second kind case, or in the elementary  the situation is more difficult, since the limit set of Fuchsian groups of the second kind is complicated, and it is  trivial for an elementary Fuchsian groups (consisting at most of two points).

\begin{thm} \label{th classification 3}
If $\ga$ is a Fuchsian group of the first kind, then $\eh=\{h_0\}$.
\end{thm}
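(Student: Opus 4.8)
The plan is to first establish an intermediate \emph{Claim} --- every $h\in\eh$ with $Im(h)\subseteq\BH$ equals the identity $h_0$ --- and then to show that an arbitrary $h\in\eh$ must in fact have $Im(h)\subseteq\BH$, so that the theorem follows at once. The only facts about $\ga$ that enter are that a group of the first kind has limit set $\Rr$ and is therefore non-elementary; consequently $\ga$ has no common fixed point in $\Cc$ (a common fixed point would be a one-point, hence finite, orbit), and by \tf{th infinite hyperpolic} it contains infinitely many hyperbolic elements whose fixed-point sets are pairwise distinct.

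To prove the Claim, let $h\colon\BH\to\BH$ be holomorphic with $\g h=h\g$ for all $\g\in\ga$. If $h$ fixes two distinct points of $\BH$, then equality is attained in the Schwarz--Pick inequality, so $h$ is an isometry of the hyperbolic metric, that is, an automorphism of $\BH$ fixing two points, hence $h=h_0$. If $h$ fixes exactly one point $p\in\BH$, then for every $\g\in\ga$ the point $\g p=\g(h(p))=h(\g p)$ is again a fixed point of $h$, so $\g p=p$; thus $p$ is a common fixed point of $\ga$, which is impossible. If $h$ has no fixed point in $\BH$, then the Denjoy--Wolff theorem furnishes a point $\tau\in\Rr$ with $h^{\,n}\to\tau$ locally uniformly on $\BH$; letting $n\to\infty$ in $\g\circ h^{\,n}=h^{\,n}\circ\g$ and using the continuity of the M\"{o}bius transformation $\g$ at $\tau$ gives $\g\tau=\tau$ for every $\g\in\ga$, again a common fixed point and again impossible. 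So only the first alternative occurs and $h=h_0$.

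Now let $h\in\eh$ be arbitrary. Being holomorphic, $h$ omits the value $\infty$; and if $Im(h)$ met $\BR$, then $Im(h)\cap\La\neq\emptyset$, so by \cf{cor1} $h$ would be surjective onto $\Cc$, which is absurd. Hence $Im(h)$ is a connected subset of $\BC\setminus\BR$, so either $Im(h)\subseteq\BH$ or $Im(h)\subseteq\BH^{-}$, the lower half-plane. In the first case the Claim gives $h=h_0$. In the second case, write $C(z)=\bar z$; since the elements of $\ga$ have real entries, $C\g=\g C$ for all $\g\in\ga$, so $g:=C\circ h$ is an anti-holomorphic self-map of $\BH$ that still commutes with $\ga$. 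Then $g\circ g$ is a holomorphic equivariant function with image in $\BH$, so $g\circ g=h_0$ by the Claim; consequently $g$, and hence also $h=C\circ g$, is a bijection of $\BH$ onto $\BH^{-}$, so $h$ is a M\"{o}bius transformation (a holomorphic bijection between two half-planes being M\"{o}bius). But a non-identity M\"{o}bius transformation cannot commute with two elements having different fixed-point sets (\tf{th 4}), whereas $h$ commutes with the infinitely many hyperbolic elements of $\ga$ noted above, and $h\neq h_0$ because $Im(h)\subseteq\BH^{-}$. This contradiction rules out the second case, so $Im(h)\subseteq\BH$ always, and therefore $\eh=\{h_0\}$.

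The step I expect to be the crux is the reduction to a self-map of $\BH$: the surjectivity corollary disposes of the case in which the image touches $\Rr$, but $Im(h)\subseteq\BH^{-}$ genuinely occurs for elementary groups (for instance $h(z)=-z$ commutes with $z\mapsto\lambda z$), so this possibility must be excluded by invoking non-elementarity. The conjugation $g=C\circ h$ is the device that reduces it back to the already-settled Claim, and some care is needed to keep this use of the Claim non-circular. By comparison, the Schwarz--Pick and Denjoy--Wolff ingredients are routine once one is working with a holomorphic self-map of $\BH$; note in particular that in the fixed-point-free case $h$ is automatically not an elliptic automorphism, so the Denjoy--Wolff theorem applies.
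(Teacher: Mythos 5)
Your proof is correct and takes essentially the same route as the paper: \cf{cor1} confines the image to $\BH$ or $\BH^{-}$, the Denjoy--Wolff theorem together with non-elementarity disposes of the self-map case, and reflection in the real axis reduces the $\BH^{-}$ case to the first one, forcing $h$ to be a M\"{o}bius bijection that cannot commute with a non-elementary group. The one citation to tighten is the final appeal to \tf{th 4}, which is stated for two elements of $\PR$ whereas your $h$ carries $\BH$ to $\BH^{-}$ and so lies outside $\PR$; it suffices to observe instead that a M\"{o}bius transformation commuting with a hyperbolic element must fix its attracting and repelling points separately, so commuting with two hyperbolic elements of $\ga$ having disjoint fixed-point sets already forces $h=h_0$.
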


\begin{proof}

Suppose that $\ga$ is a Fuchsian group of the first kind, and let $h$ be in $\eh$.
Since $h$ is an open map, and $\La=\Rr$, $h$ cannot take any real value, otherwise it will have infinitely many poles, see \cf{cor1}. Hence either $h$ maps the upper half plane into itself, or it maps it into the lower half plane $\BH^-$.

Suppose first that $h$ maps the upper half-plane $\BH$ into itself and that $h\neq h_0$. According to the theorem of Denjoy-Wolff, \cite{carl}, $h$ is either an elliptic element of $\PR$ or the iterates $h_n = h\circ...\circ h$, $n$ times, which are also in $\eh$,
converge  uniformly on compact subsets of $\BH$ to a point $p \in \Hh$. Since $h$ commutes with $\ga$, $h$ cannot be an elliptic element of $\PR$, otherwise all elements of $\ga$ will have the same fixed point set according to \tf{th 4} and therefore $\ga$ will be an elementary group
which is not the case.  If the iterates $h_n$ of $h$ converge to a point $p\in \Hh$, then since
$\g h_n=h_n\g$, we have $\g p=p$ for all $\g\in \ga$. This impossible since $\ga$ is not an elementary group. Consequently, the only equivariant
 function that maps $\BH$ into itself is $h_0$.

 We now suppose that $h$  maps the upper half-plane $\BH$ into the lower half-plane  $\BH^-$. But then we can extend $h$ to a holomorphic function $\widetilde{h}$ defined on $\BH \cup \BH^-$ by
$$\widetilde{h}(z)=\overline{h(\bar{z})}, ~z\in \BH^-,
$$
and it is easy to see that the restriction of $\widetilde{h}\circ \widetilde{h}$ to $\BH$ is an equivariant function that maps $\BH$ into itself. By the above discussion, we have
$$\widetilde{h}\circ \widetilde{h}(z)=z~\mbox{ for } z\in\BH\,.$$
This implies that $h$ is bijective. Therefore, the function $-h$ is an automorphism of $\BH$ and so is an element of $\PR$, and so  $\widetilde h$ extends to  a \mt commuting with $\ga$. Since $\widetilde h\circ \widetilde h=id$, $\widetilde h$ is elliptic of order $2$. After a suitable conjugation, we can assume that $\widetilde h=m_k$ with $k^2=1$.
 If $k=-1$ then for all $\g \in\ga$ we have
$$
\g(-z)=-\g(z)\mbox{ for all } z\in\Cc.
$$
In particular,
$$
\g(0)=0 \mbox{ for all } \g\in\ga,
$$
and hence $\ga$ is elementary which is not the case. Thus $k=1$, and so $h=h_0$. We conclude that $\eh=\{h_0\}$.
\end{proof}

Note that the unique property of $\ga$ used above is $\ga$ being non elementary, hence we have

\begin{cor}
If $\ga$ is a non elementary Fuchsian group and $h\in\eh-\{h_0\}$, then the image of $h$ meets the real line.
\end{cor}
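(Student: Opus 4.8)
The plan is to reduce the corollary to the already-proved Theorem~\ref{th classification 3} by contradiction. Assume $h\in\eh-\{h_0\}$ has image disjoint from $\BR$. Since $h$ is holomorphic and hence an open map, $Im(h)$ is an open connected subset of $\BC$ not meeting the real line, so it lies entirely in $\BH$ or entirely in the lower half-plane $\BH^-$. This dichotomy is exactly the starting point of the proof of Theorem~\ref{th classification 3}, so the bulk of the work is to verify that the argument there only used non-elementarity of $\ga$, never the first-kind hypothesis (or, more precisely, never anything beyond what follows from $\ga$ being non-elementary).

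\textbf{Case 1: $h$ maps $\BH$ into $\BH$.} Here I would run the Denjoy--Wolff argument verbatim: either $h$ is an elliptic element of $\PR$, forcing (via Theorem~\ref{th 4}) all elements of $\ga$ to share a fixed-point set and hence $\ga$ to be elementary, a contradiction; or the iterates $h_n$ converge uniformly on compacts to some $p\in\Hh$, and then $\g h_n = h_n \g$ passes to the limit to give $\g p = p$ for all $\g\in\ga$, so $\Hh$ contains the single-point $\ga$-orbit $\{p\}$ and $\ga$ is elementary, again a contradiction. Notice that the only inputs are: the Denjoy--Wolff theorem (pure function theory of $\BH$, no discreteness needed), Theorem~\ref{th 4}, and the definition of elementary — all available without the first-kind assumption.

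\textbf{Case 2: $h$ maps $\BH$ into $\BH^-$.} I would reflect as before, setting $\widetilde h(z)=\overline{h(\bar z)}$ on $\BH^-$; then $\widetilde h\circ\widetilde h$ restricted to $\BH$ is a holomorphic equivariant function mapping $\BH$ into $\BH$, so by Case 1 it equals $h_0$. Hence $h$ is injective, $-h$ is an automorphism of $\BH$, $\widetilde h$ extends to a \mt commuting with $\ga$ which is elliptic of order $2$; conjugating so $\widetilde h = m_k$ with $k^2=1$, the case $k=-1$ again forces $\g(0)=0$ for all $\g$ and $\ga$ elementary, contradiction, so $k=1$ and $h=h_0$, contradicting $h\neq h_0$. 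Again nothing beyond non-elementarity is used. Combining the two cases, the assumption that $Im(h)\cap\BR=\emptyset$ is untenable, so $Im(h)$ meets the real line, proving the corollary.

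The only genuine subtlety — and the step I would state carefully rather than wave at — is the claim that Case~1 goes through for an arbitrary non-elementary Fuchsian group rather than a first-kind one: one must be sure that the contradiction is always "$\ga$ elementary," which it is, since in both sub-cases we produce either a common fixed-point set for all of $\ga$ or a finite (one-point) $\ga$-orbit in $\Hh$, and by definition a group with a finite orbit in $\Hh$ is elementary. There is no obstacle here beyond bookkeeping; the remark preceding the corollary already flags that "the unique property of $\ga$ used above is $\ga$ being non elementary," so the proof is essentially a one-line appeal: \emph{apply the proof of Theorem~\ref{th classification 3}, which used only that $\ga$ is non-elementary, to conclude that a holomorphic equivariant function with image avoiding $\BR$ must be $h_0$.}
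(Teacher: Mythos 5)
Your proposal is correct and is exactly the argument the paper intends: the paper's own ``proof'' of this corollary is the one-line remark that the first-kind hypothesis in Theorem~\ref{th classification 3} was used only to force $Im(h)\cap\BR=\emptyset$ via \cf{cor1}, and that the remaining Denjoy--Wolff and reflection arguments need only non-elementarity. Your case analysis reproduces that reduction faithfully, so there is nothing to add.
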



\section{Unrestricted automorphic forms}
In this section we follow the treatment of \cite{RN}, and \cite{knopp}.

Let $\g=\left(
         \begin{array}{cc}
           a & b \\
           c & d \\
         \end{array}
       \right) \in \SR$, $z\in\BH$ and $k$ be  a real number. If $j_\g(z)=j(\g,z)=cz+d$, then for any complex valued function $f$ defined on $\BH$, the slash operator of weight $k$ on $f$  is defined by:
$$
(f|_k\g)(z)=j_\g(z)^{-k}f(\g z).
$$
\begin{defn}
An \textit{automorphic factor} (AF) $\mu$ of weight $k\in \BR$ for a subgroup $\ga$ of $\PR$ is a map $\mu: \ga \times \BH \ra \BC^{\times}$ satisfying
\begin{\enu}
\item For all $\g=\left(
         \begin{array}{cc}
           a & b \\
           c & d \\
         \end{array}
       \right) \in \ga$ and $z \in \BH$, $|\mu_{\g}(z)| = |j_\g(z)|^k$.
\item For all $\al,\g \in \ga$ and $z \in \BH$, $\mu_{\al\g}(z) =\mu_{\al}(\g z)\mu_\g(z).$
\item For all $\g \in \ga$ and $z \in \BH$,  $\mu_{-\g}(z) =\mu_{\g}(z)$
\end{\enu}
\end{defn}

\begin{rem}
In what follows we will identify an element $\g \in \PR$ with its representative $\left(
         \begin{array}{cc}
           a & b \\
           c & d \\
         \end{array}
       \right) \in \SR$, since the third condition says that the automorphic factor $\mu$ is well defined on $\ga \times \BH$, where $\ga$ is a subgroup of $\PR$.
\end{rem}

\begin{eg}
For any $k\in 2\BZ$, the function $j^k_\g: \PR \times \BH \ra \BC^{\times}$ is an an automorphic factor of weight $k$.
\end{eg}

 Since a holomorphic function on $\BH$ of constant modulus must be constant, then from the above definition we have
$$
\mu_\g(z)=\mu(\g,z)=\nu(\g)j_\g
$$
for all $\g \in \ga$ and $z \in \BH$, where $\nu(\g)$ depends only on $\g$ and
$$
|\nu(\g)|=1.
$$
The factor $\nu(\g)$ is called a \textit{multiplier}, and the function $\nu$ defined on $\ga$ is called a \textit{multiplier system}(MS) of weight $k$ for $\ga$. Note that $\nu(I) = 1$ and $\nu(-I) = e^{-i\pi k}$, and for any $\al, \g \in \ga$ we have
$$
\nu(\al\g)=\s(\al,\g)\nu(\al)\nu(\g), \mbox{ where } \s(\al,\g)=\frac{j(\al,\g z)j(\g,z)}{j(\al\g,z)}~,~~(|\s(\al,\g)|=1).
$$

\begin{defn}
Let $\mu$ be an automorphic factor of weight $k \in \BR$ for a subgroup $\ga$ of $\PR$ and let $\nu$ be the associated multiplier system. A function $f : \BH \ra \BC$ is called an \textit{unrestricted automorphic form} for $\ga$ of weight $k$, with automorphic factor $\mu$ (or, equivalently, with
multiplier system $\nu$) if it satisfies
\begin{\enu}
\item $f$ is meromorphic on $\BH$.
\item $f|_k\g =\nu(\g)f$ for all $\g \in \ga$.
\end{\enu}
 The $\BC-$vector space of all unrestricted automorphic forms for $\ga$ of weight $k$ and MS $\nu$ will be denoted by $M_u(\ga,k,\nu)$.\\
\end{defn}
 
Suppose that $\g$ is a parabolic element of $\ga$ with  fixed point $x$, and let
 $$L=L(x)=\left(
              \begin{array}{cc}
               0 &-1 \\
               1 & -x \\
              \end{array}
              \right).$$
Then $L\in\SR$, $Lx=\infty$, and $U_L=L^{-1}\g L$ is an element of $\ga^L=L^{-1}\ga L$ fixing $\infty$, and so
$$ L^{-1}\g L(z)=z+n_L, \mbox{ for some } n_L \in \BR.
$$
Since $U_L$ is a translation we have
$$
\nu^L(U_L)=\nu(\g)=e^{2\pi i k_L} \mbox{ with } 0\leq k_L < 1.
$$
 Using the fact that $f|_kL$ is an unrestricted automorphic From  on $\ga_L$ we get
$$
(f|_kL)(z+n_L)= e^{2\pi i k_L}(f|_kL)(z), ~z\in \BH.
$$
It follows that the  function
$$
f_L^*(z)=e^{-2\pi i k_L z/n_L}(f|_kL)(z),~z\in \BH,
$$
is periodic of period $n_L$. Hence, there is a unique meromorphic function $F_L$ defined on $\BU-\{0\}$, such that
$$
f_L^*(z)=F_L(q),
$$
 where $ q=e^{-2\pi i z/n_L}$ is  called a \textit{local uniformizing variable}. If $f_L$ is holomorphic on $\{z\in \BH,~ \mbox{Im}(z)>\eta\}$, for some $\eta\geq0$, then $F_L$ is holomorphic on $\{t\in \BU,~ 0<|q|<e^{-2\pi i \eta/n_L}\}$ and has a convergent Laurent series in this neighborhood of $0$
$$
(f|_kL)(z)= e^{2\pi ik_L z/n_L} \sum_{m=N_L}^{\infty} a_m(L)e^{2\pi i mz/n_L},~ N_L\in\BZ\cup\{\infty\}.
$$
 We say that $f$ is meromorphic, holomorphic at the cusp $x$, if respectively $N_L\neq\infty$, $N_L \geq 0$.
\begin{defn}
Suppose that $f\in M_u(\ga,k,\nu)$. Then
\begin{enumerate}
 \item If $f$  is  meromorphic  at every cusp of $\ga$, then  $f$ is called an \textit{automorphic form} of weight $k$ and MS $\nu$ for $\ga$ . The $\BC$ vector space of these functions is denoted by $M(\ga,k,\nu)$.

\item If $f$ is holomorphic on $\BH$ and at every cusp of $\ga$, then $f$ is called an \textit{entire automorphic form} of weight $k$ and MS $\nu$ for
      $\ga$. The $\BC-$vector space of entire automorphic forms is denoted by $M_e(\ga,k,\nu)$.

\item If $f$ is an entire automorphic form vanishing at all the cusps of $\ga$, then $f$ is called a \textit{cusp form} of weight $k$
       and MS $\nu$ for$\ga$. The $\BC-$vector space of entire automorphic forms is denoted by  $S(\ga,k,\nu)$.
\end{enumerate}

In all cases, when $k=0$ and $\nu=1$, the word form is replaced by the word function.

\end{defn}
If the context is clear, the reference to the group and the multiplier system will be omitted.

Examples of automorphic forms of a Fuchsian group of any type can be found in \cite{FRD}. Here, we content ourselves to give examples in the case of the modular group and some of its special subgroups.

Since the limit set of a Fuchsian group is closed and contains the parabolic points, the limit set of  $\PZ$ is equal to $\BR \cup \{\infty\}$ as
 the set of cusps  of $\PZ$ is $\BQ \cup \{\infty\}$. Hence $\PZ$ and all its subgroups of finite index are Fuchsian groups of the first kind.

The modular group $\PZ$ is generated by $S$ and $T$ or  by $S$ and $P$, where
$$T=\left(
    \begin{array}{cc}
      1 & 1 \\
      0 & 1 \\
    \end{array}
  \right),\;S=
\left(
  \begin{array}{cc}
    0 & -1 \\
     1& 0 \\
  \end{array}
\right),\; P=ST=
\left(
  \begin{array}{cc}
    0 & -1 \\
    1 & 1 \\
  \end{array}
\right).
$$
Moreover, $S$ is elliptic of order $2$ with fixed point $i$, and $P$ is elliptic of order $3$ with fixed point $\rho=e^{2\pi i/3}$.

An important class of subgroups of $\PZ$ is the so-called \textit{congruence subgroups}.  These are  subgroups $\ga$ of $\PZ$  containing a principal congruence subgroup $\ga(n)$, $n$ is a positive integer, with
$$
\ga(n)=\{\g \in \SZ,\; \g\equiv \pm I \mod n\}/\{\pm I\},
$$
and the smallest such $n$ is called the \textit{level} of $\ga$.

Some well known examples of congruence subgroups of $\PZ$ are :
$$
\ga_1(n)=\{ \g \in \SZ,\;\g\equiv \pm
 \left(
             \begin{array}{cc}
               1 & * \\
               0 & 1 \\
             \end{array}
           \right) \mbox{ mod } n \}/\{\pm I\},
$$

$$
\ga_0(n)= \left\{\left(
             \begin{array}{cc}
               a & b \\
               c & d \\
             \end{array}
           \right)
 \in \SZ,\; c\equiv 0 \mbox{ mod } n \right\}/\{\pm I\},
$$

$$
\ga^0(n)=\left\{\left(
             \begin{array}{cc}
               a & b \\
               c & d \\
             \end{array}
           \right)
 \in \SZ,\; b\equiv 0 \mbox{ mod } n\right\}/\{\pm I\}.
$$
It is clear that
$ \ga(n)\subset\ga_1(n)\subset \ga_0(n)$
and that  $\ga_0(n)$, $\ga_1(n)$, $\ga^0(n)$ are of level $n$. Note that $\ga_0(n)$ is conjugate to $\ga^0(n)$ by $\binom{n\ 0}{0\ 1}$.

When speaking about the modular group $\PZ$ and its subgroups, the word automorphic in the definition  is replaced by the word modular.
We start our examples by the well known   Eisenstein series. They are defined for every even integer $k\geq 2$ and $z \in \BH$ by
$$G_k(z) =\sum_{m,n}\,'\frac{1}{(mz + n)^k},
$$
where the symbol $\sum\;'$ means that the summation is over the pairs $(m, n) \neq (0, 0)$. Note that this series is not absolutely convergent for k = 2.
We normalize Eisenstein series by letting
$$
E_k=2\zeta(k)G_k
$$
to get the following representation
$$
E_k( z ) \,=\, 1-\frac{2k}{B_k} \,\sum_{n=1}^{\infty}\,\sigma_{k-1}(n)q^n,\;  q=e^{2\pi i z}.
$$
Here $B_k$ is the $k$-th \textit{Bernoulli} number and $\sigma_{k-1}(n)=\sum_{d|n}d^k$.
The most familiar Eisenstein series are:
$$
E_2( z ) \,=\, 1-24\,\sum_{n=1}^{\infty}\,\sigma_1(n)q^n,
$$
$$
E_4(z)\,=\,1+240\,\sum_{n=1}^{\infty}\,\sigma_3(n)q^n,
$$
$$
E_6(z)\,=\,1-504\,\sum_{n=1}^{\infty}\,\sigma_5(n)q^n.
$$
For $k\geq 4$, the series $E_k$ are entire modular forms of weight $k$. The Eisenstein series $E_2$ is holomorphic on $\BH$ and at the cusps, but it is not a modular form as it does not satisfy the modularity condition. The Eisenstein series $E_2$ is an example of a \textit{quasimodular form}  and plays an important role in the construction of equivariant functions as will be seen later on. Moreover, $E_2$ satisfies
\begin{equation}\label{log-div}
E_2(z)\,=\,\frac{1}{2\pi i}\frac{\Delta'(z)}{\Delta(z)},
\end{equation}
where $\Delta$ is the weight 12 cusp form for $\PZ$ given by
$$
\Delta(z)\,=\,q\,\prod_{n\geq 1}\,(1-q^n)^{24}.
$$
The Eisenstein series satisfy  the Ramanujan relations
\begin{equation}\label{ram1}
\frac{6}{\pi i}\,E'_2\,=\,E_2^2-E_4\, ,\
\end{equation}
\begin{equation}\label{ram2}
 \frac{3}{2\pi i}\,E'_4\,=\,E_4E_2-E_6\, ,
 \end{equation}
\begin{equation}\label{ram3}
  \frac{1}{\pi i}\,E'_6\,=\,E_6E_2-E_4^2\,.
\end{equation}

The Dedekind j-function given by
$$
j(z)=\frac{E_4^3-E_6^2}{\Delta}
$$
is a modular function and it generates the function field of modular functions for $\PZ$.

An example of an entire modular form with a non-trivial multiplier system is the Dedekind eta-function $\eta(z)=\Delta(z)^{1/24}$. It is a weight $1/2$ entire modular form for $\PZ$ with the multiplier systems  given by
$$
\nu(T)=e^{i\pi/12}, \;\nu(S)=e^{-i\pi/4},\; \nu(P)=e^{-i\pi/8}.
$$

Now, we give an example of entire modular forms on congruence subgroups with non-trivial multiplier systems, namely
the Jacobi theta functions. They are defined by
$$\begin{array}{rcl}
\theta_2(z)&=&\displaystyle\sum_{n=-\infty}^{\infty}q^{(n+1/2)^2}\, ,\\
&&\\
\theta_3(z)&=&\displaystyle\sum_{n=-\infty}^{\infty}q^{n^2}\, ,\\
&&\\
\theta_4(z)&=&\displaystyle\sum_{n=-\infty}^{\infty}(-1)^nq^{n^2}\,.
\end{array}
$$
They are entire holomorphic modular
forms of weight $\frac{1}{2}$ for the conjugate congruence subgroups $\ga_0(2),\; P^{-1}\ga_0(2)P,\; P^{-2}\ga_0(2)P^2$ respectively. Their associated multiplier systems are $u, v$ and $w$ respectively and are defined by
$$
 v(-I)= u(-I)=  w(-I)=  -i,
$$
$$
v(T^2)=  u(P T^2 P^{-1})=  w(P^2 T^2 P^{-2})=  1,
$$
$$
 v(S)=  u(P S P^{-1})=  w(P^2 S P^{-2})=  e^{-i\pi/4}.
$$


Moreover, these modular forms do not vanish on $\BH$ and satisfy the Jacobi identity
$$
\theta_2^4+\theta_4^4=\theta_3^4.
$$
Furthermore, the following relations hold between the theta functions, $E_4$, $\Delta$ and  $\eta$:
\begin{equation}\label{eta}
\begin{array}{rcc}
\Delta(z)&=&\displaystyle(2^{-1}\theta_2(z) \theta_3(z) \theta_4(z))^8\\
&&\\
E_4(z)&=&\displaystyle\frac{1}{2}(\theta_2^8(z)+\theta_3^8(z)+ \theta_4^8(z))\\
&&\\
\theta_2(z)&=&2\displaystyle\frac{\eta(4z)^2}{\eta(2z)}\\
&&\\
\theta_3(z)&=&\displaystyle\frac{\eta(2z)^5}{\eta(z)^2\eta(4z)^2}\\
&&\\
\theta_4(z)&=&\displaystyle\frac{\eta(z)^2}{\eta(2z)}.
\end{array}
 \end{equation}

\section{unrestricted  quasi automorphic forms}

Let $\ga$ be a subgroup of $\PR$, $k\in \BR$, and $\nu$ be a MS for $\ga$. An \textit{unrestricted quasi-automorphic form} of weight $k$, depth $p$  and MS $\nu$ on $\ga$ is a meromorphic function $f$ on $\BH$ such that for all $\g\in\ga$, $z\in\BH$
\begin{equation}\label{quasi}
(f|_k\g)(z)\,=\nu(\g)\,\sum_{i=0}^{p}\,f_n(z)\left(\frac{c}{cz+d}\right)^n,
\end{equation}
where each $f_n$ is a meromorphic function of $\mathbb H$.
 The quasi-automorphic polynomial attached to $f$ is defined by
\begin{equation}\label{quas-pol}
P_{f,z}(X)\,:=\,\sum_{i=0}^{p}\,f_n(z)X^n.
\end{equation}
The vector space of such functions will be  denoted by $\widetilde{M_u}(\ga,k,p,\nu)$, and  $\widetilde{M_u}(\ga,k,\nu)=\bigcup_p \widetilde{M_u}(\ga,k,p,\nu)$ will denote the space of all unrestricted quasi-automorphic forms of weight $k$ and MS $\nu$. When $k$ runs over $\BZ$, the set $\widetilde{M_u}(\ga,\nu)=\bigoplus_k \widetilde{M_u}(\ga,k,\nu)$ is a graded algebra that will be called the \textit{graded algebra of all unrestricted unrestricted quasi-automorphic forms for the MS $\nu$}.

Let $f$ be a nonzero unrestricted automorphic form in $f\in M_u(\ga,k,\nu)$, $k\neq 0$. The derivative $df=f'$ satisfies
\begin{equation}\label{mod dervitive}
(f'|_k\g)(z)\,=\,\nu(\g)\left(kc(cz+d)^{k+1}f(z)\,+\,(cz+d)^{k+2}f'(z)\right) \,,\ z\in {\mathbb H}\,,\, \gamma=\binom{a\ b}{c\ d}\in \ga,
\end{equation}
that is, $df=f' \in \widetilde{M_u}(\ga,k+2,1,\nu)$ and
$$
P_{f',z}(X)=f'(z)+k f(z)X.
$$
If $L_f$ denote the logarithmic derivative of $f$, \ie
\begin{equation}\label{log dervitive}
L_f=f'/f,
\end{equation}
then
\begin{equation}\label{log quasi}
(L_f|_2\g)(z)=L_f(z)+ \frac{kc}{cz+d}\,,\ z\in {\mathbb H}\,,\, \gamma=\binom{a\ b}{c\ d}\in \ga.
\end{equation}
Thus $L_f \in \widetilde{M_u}(\ga,2,1,\nu=1)$, and
$$
P_{L_f,z}(X)=L_f(z)+k X.
$$
Furthermore, we have the following structure theorem.
\begin{thm}
With the above notations, we have
\begin{\enu}
\item $d\left(\widetilde{M_u}(\ga,k,p,\nu)\right)\subset\widetilde{M_u}(\ga,k+2,p,\nu)$.
\item If $\ga$ is a non elementary Fuchsian subgroup, then
$$\widetilde{M_u}(\ga,k,p,\nu)=\bigoplus_{n=0}^{n=p}M_u(\ga,k-2n,\nu)\, \displaystyle L_f^n.$$
\end{\enu}
\end{thm}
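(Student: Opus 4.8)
The plan is to dispose of (1) by a single differentiation, and to prove (2) by induction on the depth $p$, the inductive step consisting in ``peeling off'' the $L_f^{0}$–term. Throughout I write $\g=\binom{a\ b}{c\ d}$ and set $\chi_\g(z):=\dfrac{c}{cz+d}=\dfrac{c}{j_\g(z)}$, so that $\chi_\g'=-\chi_\g^{\,2}$ and $\frac{d}{dz}(\g z)=j_\g(z)^{-2}$. For (1) I would differentiate the defining relation $(f|_k\g)(z)=\nu(\g)\sum_{n=0}^{p}f_n(z)\chi_\g(z)^{n}$ in $z$ and collect powers of $\chi_\g$; using $\frac{d}{dz}(f|_k\g)=-k\chi_\g\,(f|_k\g)+(f'|_{k+2}\g)$ this gives
$$(f'|_{k+2}\g)(z)=\nu(\g)\sum_{m}\bigl(f_m'(z)+(k+1-m)\,f_{m-1}(z)\bigr)\,\chi_\g(z)^{m}\qquad(f_{-1}:=0),$$
so $df=f'$ is again an unrestricted quasi-automorphic form of weight $k+2$ and multiplier system $\nu$, its quasi-polynomial obtained from $P_{f,z}$ through $f_m\mapsto f_m'+(k+1-m)f_{m-1}$. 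This is (1).

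For (2), let $w\neq0$ be the weight of the reference form $f$, so that $L_f\in\widetilde{M_u}(\ga,2,1,1)$ with $P_{L_f,z}(X)=L_f(z)+wX$. Since products of quasi-automorphic forms add weights and depths, multiply multiplier systems and multiply quasi-polynomials, for $g\in M_u(\ga,k-2n,\nu)$ and $n\le p$ one has $gL_f^{\,n}\in\widetilde{M_u}(\ga,k,n,\nu)\subseteq\widetilde{M_u}(\ga,k,p,\nu)$; this gives ``$\supseteq$''. The sum is direct: because $\ga$ is non-elementary, for every $z\in\BH$ the map $\g\mapsto\chi_\g(z)$ takes infinitely many values (its nonzero values are $1/(z-\g^{-1}\infty)$ and the orbit $\ga\infty$ is infinite), so the quasi-polynomial of a relation $\sum_{n\le p}g_nL_f^{\,n}=0$, namely $\sum_n g_n(z)(L_f(z)+wX)^n$, vanishes identically in $X$, and its coefficient of $X^{p}$, which is $w^{p}g_p$, forces $g_p=0$; one descends. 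It remains to show $\widetilde{M_u}(\ga,k,p,\nu)\subseteq\sum_{n=0}^{p}M_u(\ga,k-2n,\nu)L_f^{\,n}$, which I would do by induction on $p$; for $p=0$ this is the definition $\widetilde{M_u}(\ga,k,0,\nu)=M_u(\ga,k,\nu)$.

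For the inductive step, take $F\in\widetilde{M_u}(\ga,k,p,\nu)$, $p\ge1$, with $P_{F,z}(X)=\sum_{n=0}^{p}F_n(z)X^{n}$, and put $\phi:=w^{-1}L_f$, so that $\phi\not\equiv0$ and $(\phi|_2\delta)(z)=\phi(z)+\chi_\delta(z)$, i.e. $\phi(\delta z)=j_\delta(z)^{2}\bigl(\phi(z)+\chi_\delta(z)\bigr)$. Feeding the defining relation into the slash identity $(F|_k\g)|_k\delta=\sigma(\g,\delta)^{-1}\bigl(F|_k(\g\delta)\bigr)$, using $\nu(\g\delta)=\sigma(\g,\delta)\nu(\g)\nu(\delta)$ — so that the cocycle $\sigma$ cancels — and the identity $\chi_\g(\delta z)=j_\delta(z)^{2}\bigl(\chi_{\g\delta}(z)-\chi_\delta(z)\bigr)$ (the $z$-derivative of $j_{\g\delta}(z)=j_\g(\delta z)j_\delta(z)$), one obtains, after dividing by $\nu(\g)$,
$$j_\delta(z)^{-k}\sum_{n=0}^{p}F_n(\delta z)\,j_\delta(z)^{2n}\bigl(\chi_{\g\delta}(z)-\chi_\delta(z)\bigr)^{n}=\nu(\delta)\,P_{F,z}\bigl(\chi_{\g\delta}(z)\bigr)\qquad(\g,\delta\in\ga).$$
Fixing $\delta$ and $z$ and letting $\g$ vary, $\chi_{\g\delta}(z)$ ranges over an infinite set, so this becomes an identity of polynomials in one indeterminate $X$, namely $j_\delta(z)^{-k}\sum_n F_n(\delta z)j_\delta(z)^{2n}(X-\chi_\delta(z))^{n}=\nu(\delta)P_{F,z}(X)$. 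Now I specialize $X=-\phi(z)$ and set $g_0(z):=P_{F,z}(-\phi(z))=\sum_{n=0}^{p}F_n(z)(-\phi(z))^{n}$. The right-hand side becomes $\nu(\delta)g_0(z)$; on the left, since $-\phi(z)-\chi_\delta(z)=-(\phi|_2\delta)(z)=-j_\delta(z)^{-2}\phi(\delta z)$, every factor $j_\delta(z)^{2n}$ cancels and the left-hand side collapses to $j_\delta(z)^{-k}\sum_n F_n(\delta z)(-\phi(\delta z))^{n}=j_\delta(z)^{-k}g_0(\delta z)=(g_0|_k\delta)(z)$. Hence $g_0|_k\delta=\nu(\delta)g_0$ for all $\delta\in\ga$, i.e. $g_0\in M_u(\ga,k,\nu)$.

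Establishing this last fact — that the value of the quasi-polynomial of $F$ at the ``correction point'' $-\phi(z)$ is a genuine automorphic form carrying \emph{exactly} the multiplier $\nu$ — is the crux of the argument, and the step I expect to require the most care: one must check that the slash-cocycle and the multiplier-system cocycle $\sigma(\g,\delta)$ really do cancel (so that no spurious root of unity clings to $g_0$), and use non-elementarity of $\ga$ to guarantee that $\chi_\g(z)$ takes infinitely many values. Granting this, the remainder is bookkeeping. The difference $F-g_0\in\widetilde{M_u}(\ga,k,p,\nu)$ has quasi-polynomial $P_{F,z}(X)-g_0(z)$, which vanishes at $X=-\phi(z)$ by construction, hence equals $(X+\phi(z))R(z,X)$ with $R(z,X)=\sum_{m=0}^{p-1}R_m(z)X^{m}$, $R_m$ meromorphic; since $\chi_\delta(z)+\phi(z)=(\phi|_2\delta)(z)=j_\delta(z)^{-2}\phi(\delta z)$, dividing the transformation law of $F-g_0$ by $\phi$ gives $(H|_{k-2}\delta)(z)=\nu(\delta)R(z,\chi_\delta(z))$ for $H:=(F-g_0)/\phi$, so $H\in\widetilde{M_u}(\ga,k-2,p-1,\nu)$. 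By the induction hypothesis $H=\sum_{n=0}^{p-1}\tilde h_nL_f^{\,n}$ with $\tilde h_n\in M_u(\ga,k-2(n+1),\nu)$, whence
$$F=g_0+\phi H=g_0+\sum_{n=1}^{p}\bigl(w^{-1}\tilde h_{n-1}\bigr)L_f^{\,n}\in\sum_{n=0}^{p}M_u(\ga,k-2n,\nu)L_f^{\,n},$$
which closes the induction. (Only non-elementarity of $\ga$ is used; discreteness enters solely to guarantee that a reference form $f$ of nonzero weight exists — e.g. a Poincar\'e series of large even weight — since by \tf{th classification 1}, applied to $h_f$ of \rf{0equiv}, a non-discrete non-elementary group has no nonzero automorphic form of nonzero weight and the statement is then vacuous.)
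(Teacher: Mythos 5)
Your argument is correct, and it does considerably more than the paper, whose entire proof of this theorem is the single sentence that it is ``a straightforward generalization of the proof given in \cite{RY}\dots the only relevant property of $\PZ$ used there is of being non elementary.'' You have in effect written out that generalization, and the two pillars of your write-up are exactly the ones the paper is implicitly leaning on: (i) for a non-elementary $\ga$ the set $\{\chi_\g(z)=c/(cz+d):\g\in\ga\}$ is infinite for each fixed $z$ (since $\chi_\g(z)=1/(z-\g^{-1}\infty)$ and a non-elementary group has no finite orbit in $\Hh$), which converts every transformation law into an identity of polynomials in an indeterminate $X$ and yields both the uniqueness of the coefficient functions and the directness of the sum; and (ii) the cocycle identity $\chi_\g(\delta z)=j_\delta(z)^2\bigl(\chi_{\g\delta}(z)-\chi_\delta(z)\bigr)$ combined with $\nu(\g\delta)=\s(\g,\delta)\nu(\g)\nu(\delta)$, which you use to check that $g_0(z)=P_{F,z}(-\phi(z))$ lands in $M_u(\ga,k,\nu)$ with no stray root of unity. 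Your ``bottom-up'' peeling (evaluate the quasi-polynomial at $-\phi(z)$ to extract the depth-zero part, then divide by $\phi$) is a mild variant of the usual Kaneko--Zagier/Martin--Royer ``top-down'' peeling (show the top coefficient $F_p$ is automorphic of weight $k-2p$ and subtract $w^{-p}F_pL_f^{\,p}$); both inductions close the same way, and yours has the small advantage that the quotient step $(H|_{k-2}\delta)=\nu(\delta)R(z,\chi_\delta(z))$ is a one-line verification. I checked the residual points you leave implicit ($\phi\not\equiv 0$ because a nonzero form of nonzero weight on a non-elementary group is nonconstant; the coefficients $R_m$ are meromorphic because the divisor $X+\phi(z)$ is monic in $X$; $\nu$ remains a multiplier system when the weight drops by the even integer $2$), and they all go through.

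One point you should flag rather than pass over: your own formula for $(f'|_{k+2}\g)$ has the nonzero coefficient $(k-p)f_p$ in degree $p+1$, so differentiation generically raises the depth by one. This is consistent with the paper's own example $P_{f',z}(X)=f'(z)+kf(z)X$ for $f$ of depth $0$, but it contradicts the literal statement of part (1), which should read $d\bigl(\widetilde{M_u}(\ga,k,p,\nu)\bigr)\subset\widetilde{M_u}(\ga,k+2,p+1,\nu)$. Your computation proves this corrected statement; concluding ``This is (1)'' without noting the off-by-one in the printed depth is the only blemish in an otherwise complete proof.
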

\begin{proof}
This is a straightforward generalization of the proof given  in \cite{RY}, where the author consider the case of holomorphic functions, the trivial MS, and the full modular group $\PZ$. Indeed, the only relevant property of $\PZ$ used there is of being non elementary.
\end{proof}

\begin{defn}

With the above notations, and If $f\in M(\ga,k,\nu)$, $k\neq 0$, then
\begin{enumerate}

\item  The space
$$
\widetilde{M}(\ga,k,p,\nu)=\bigoplus_{n=0}^{n=p}M(\ga,k-2n,\nu)\, \displaystyle L_f^n,
$$
is called the space of quasi-automorphic forms of weight $k$, depth $p$ and MS $\nu$ on $\ga$.\\

\item Suppose that $\ga$ is a non elementary Fuchsian subgroup. If $f$ has no zeros in $\BH$, then $L_f$ is holomorphic on $\BH$, and the spaces
$$
\widetilde{M_e}(\ga,k,p,\nu)=\bigoplus_{n=0}^{n=p}M_e(\ga,k-2n,\nu)\, L_f^n,
$$
$$
\widetilde{S}(\ga,k,p,\nu)=\bigoplus_{n=0}^{n=p}S(\ga,k-2n,\nu)\, L_f^n,
$$
are respectively called the space of entire quasi-automorphic forms, quasi-cusp forms of weight $k$, depth $p$ and MS $\nu$  on $\ga$.
\end{enumerate}
\end{defn}

 The group $\ga=\PZ$ is a non elementary Fuchsian group of the first kind. If $f=\Delta$ is the weight $12$ cusp form for $\PZ$, then $f$ does not vanish on $\BH$, and by \eqref{log-div} we have
$$L_f=2\pi i E_2.$$
Using the fact that the graded algebra of entire modular forms with the trivial MS on $\SZ$ is
$$\BC[E_4,E_6],$$
see \cite{RN},  we get
$$
\widetilde{M_{u}}(\PZ,\nu=1)=\BC[E_4,E_6,E_2]\,,
$$
where
$\widetilde{M_u}(\PZ,\nu=1)$, is the graded algebra of entire quasi-modular forms with the trivial MS on $\PZ$.
\begin{rem}
From now on, the trivial MS will be omitted from notations.
\end{rem}

\section{Construction of equivariant functions}
\label{construction of equi functions}
For a detailed discussion on equivariant functions for the modular group (construction and structure) see \cite{ElbSeb}.

Let $\ga$ be a subgroup of $\PR$, $k\in \BR-\{0\}$, and suppose that $f$ is a nonzero unrestricted automorphic form on $\ga$ of weight $k$ and MS $\nu$.
We attach to $f$ the meromorphic function
\begin{equation}\label{equiv}
h(z)\,=\,h_f(z)\,=\, z\,+\,k\frac{f(z)}{f'(z)}=  z\,+\,\frac{k}{L_f(z)}.
\end{equation}
\begin{prop}\cite{s-s,smart}\label{prop1} The function $h$ is an equivariant function, \ie,\, it satisfies
\begin{equation}\label{equiv1}
h(\gamma\cdot z)\,=\,\gamma\cdot h(z)\ \,\mbox{for all}\, \ \gamma\in \Gamma\ ,\ \, z\in{\mathbb H}.
\end{equation}
\end{prop}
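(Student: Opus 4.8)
The plan is to verify the equivariance identity \eqref{equiv1} by a direct computation, using the transformation rule \eqref{mod dervitive} for $f'$ and the defining automorphy of $f$. Fix $\g=\binom{a\ b}{c\ d}\in\ga$ and $z\in\BH$. From $f|_k\g=\nu(\g)f$ we have $f(\g z)=\nu(\g)(cz+d)^k f(z)$, and from \eqref{mod dervitive} we have $f'(\g z)=\nu(\g)\bigl(kc(cz+d)^{k+1}f(z)+(cz+d)^{k+2}f'(z)\bigr)$. Dividing these two relations, the factor $\nu(\g)(cz+d)^k$ cancels and one obtains
\begin{equation*}
\frac{f(\g z)}{f'(\g z)}=\frac{f(z)}{kc(cz+d)f(z)+(cz+d)^2 f'(z)}=\frac{1}{(cz+d)^2}\cdot\frac{f(z)/f'(z)}{\dfrac{kc}{cz+d}\cdot\dfrac{f(z)}{f'(z)}+1}.
\end{equation*}
Hence
\begin{equation*}
h_f(\g z)=\g z+k\,\frac{f(\g z)}{f'(\g z)}=\frac{az+b}{cz+d}+\frac{k}{(cz+d)^2}\cdot\frac{f(z)/f'(z)}{1+\dfrac{kc}{cz+d}\cdot\dfrac{f(z)}{f'(z)}}.
\end{equation*}

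Next I would set $w=h_f(z)=z+k f(z)/f'(z)$ and compute $\g w=\dfrac{aw+b}{cw+d}$ directly, aiming to show it equals the expression above. Writing $u=k f(z)/f'(z)$ so that $w=z+u$, one has $\g w=\dfrac{a(z+u)+b}{c(z+u)+d}=\dfrac{(az+b)+au}{(cz+d)+cu}$. Subtracting $\g z=\dfrac{az+b}{cz+d}$ gives, after putting over a common denominator and using $ad-bc=1$,
\begin{equation*}
\g w-\g z=\frac{au(cz+d)-cu(az+b)}{(cz+d)\bigl((cz+d)+cu\bigr)}=\frac{u(ad-bc)}{(cz+d)\bigl((cz+d)+cu\bigr)}=\frac{u}{(cz+d)^2\Bigl(1+\dfrac{cu}{cz+d}\Bigr)}.
\end{equation*}
Since $u=k f(z)/f'(z)$, the right-hand side is exactly $\dfrac{k}{(cz+d)^2}\cdot\dfrac{f(z)/f'(z)}{1+\dfrac{kc}{cz+d}\cdot\dfrac{f(z)}{f'(z)}}$, matching the formula for $h_f(\g z)-\g z$ obtained in the first step. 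Therefore $h_f(\g z)=\g w=\g\cdot h_f(z)$, which is \eqref{equiv1}.

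The computation is essentially routine once \eqref{mod dervitive} is in hand; the only points requiring a word of care are that the identity is one of meromorphic functions on $\BH$, so it suffices to check it away from the (discrete) zeros and poles of $f$ and $f'$ and then invoke the identity theorem, and that $k\neq 0$ guarantees $h_f$ is genuinely defined (not identically $\infty$) and that dividing by $k$ is harmless. I do not anticipate a real obstacle here; \eqref{mod dervitive} already packages the chain-rule bookkeeping that would otherwise be the fiddly part, and the third automorphic-factor axiom $\mu_{-\g}=\mu_\g$ ensures the whole discussion descends correctly to $\ga\subset\PR$.
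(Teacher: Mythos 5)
Your proof is correct and takes essentially the same route as the paper: a direct computation from $f(\gamma z)=\nu(\gamma)(cz+d)^k f(z)$ and the derivative rule \eqref{mod dervitive}, together with $ad-bc=1$. The only (cosmetic) difference is that you match the differences $h_f(\gamma z)-\gamma z$ and $\gamma\cdot h_f(z)-\gamma z$, whereas the paper reduces both $h_f(\gamma z)$ and $\gamma\cdot h_f(z)$ to the same rational expression in $f$ and $f'$.
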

\begin{proof}
Let $\displaystyle \gamma=\binom{a\ b}{c\ d}\in\ga$. Then, we have
$$\begin{array}{ccc}
h(\alpha\cdot z)&=&\displaystyle \frac{az+b}{cz+d}+\frac{\nu(\g)\,k(cz+d)^kf(z)}{\nu(\g)\left[ck(cz+d)^{k+1}f(z)+(cz+d)^{k+2}f'(z)\right]}\\
&&\\
&=&\displaystyle\frac{(az+b)(ckf(z)+(cz+d)f'(z))+kf(z)}{(cz+d)(ckf(z)+(cz+d)f'(z))}\\
&&\\
&=&\displaystyle\frac{akf(z)+(az+b)f'(z)}{ckf(z)+(cz+d)f'(z)}\
\end{array}
$$
using the identity $ad-bc=1$.
In the meantime, we have
$$\begin{array}{ccc}
\alpha\cdot h(z)&=&\displaystyle \frac{ah(z)+b}{ch(z)+d}\\
&&\\
&=&\displaystyle\frac{(az+b)f'(z)+akf(z)}{(cz+d)f'(z)+ckf}\ .
\end{array}
$$
\end{proof}

In fact, the key property in the above proof was the  form of the quasi-automorphic polynomial attached to $\frac{1}{k}L_f$, \ie,
$$
P_{\frac{1}{k}L_f,z}(X)=\frac{1}{k}L_f(z)+X,
$$
in the sense that  any meromorphic function $g$ on $\BH$ verifying the following  property
\begin{equation}\label{quasi trans}
(g|_2\g)(z)=g+ \frac{c}{cz+d}\,,\ z\in {\mathbb H}\,,\, \gamma=\binom{a\ b}{c\ d}\in \ga
\end{equation}
gives arise to an equivariant function given by
$$
h_g=h_0 +\frac{1}{g}\,,~~~~h_0(z)=z.
$$

\begin{defn}
Let $\widetilde M_u(\ga,2,1)$ be the space of unrestricted quasi-automorphic form of weight $2$, depth $1$ on $\ga$. If  $g_0$ is the constant function $g_0=\infty$, then we define
 $$\q=\{g_0\}\cup M_u(\ga,2,1),$$

\end{defn}
 Recall that the quasi-automorphic polynomial to any element $g\in \q\setminus\{g_0\}$ is given by
$$
P_{g,z}(X)=g(z)+X,
$$
Moreover, if $g \in\q$, and $f$ is any weight $2$ unrestricted automorphic form on $\ga$, that is
$$
f|_2\g=f,~\mbox{ for all } \g \in \ga,
$$
then one can easily check that
$$
f+g \in \q.
$$
Now, if we associate to each element $h\in\e$ the function defined by
$$
\widehat{h}=\displaystyle\frac{1}{h-h_0},
$$
then $\widehat{h} \in \q$, and $\widehat{h}_0=g_0$, and  clearly we have the following result.
\begin{prop}
We have a one-to-one correspondence between $\q $ and $\e$  given by the two mutually inverse maps
$$
g\ra h_g\,,~~~~~~~h\ra\widehat h.
$$
\end{prop}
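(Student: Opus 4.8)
The plan is to check that the two assignments $g\mapsto h_g=h_0+\tfrac1g$ and $h\mapsto\widehat h=\tfrac{1}{h-h_0}$ land in the correct sets and are two-sided inverses of one another. First I would dispose of the distinguished pair: $h_0\in\e$ since the identity is trivially equivariant, and the ``formal'' values $\widehat{h_0}=\tfrac{1}{h_0-h_0}=\infty=g_0$ and $h_{g_0}=h_0+\tfrac{1}{g_0}=h_0$ show the two maps should interchange $h_0$ and $g_0$; I declare them to do so. For the remainder, assume $h\neq h_0$ and $g\neq g_0$; then $h-h_0$ is a meromorphic function on $\BH$ that is not identically infinite, so $\widehat h$ is again meromorphic (zeros of $h-h_0$ becoming poles of $\widehat h$ and conversely), and likewise $h_g$ is meromorphic with $h_g\neq h_0$.

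The only computation with any content is a single algebraic identity: for any meromorphic $h$ on $\BH$ and any $\gamma=\binom{a\ b}{c\ d}\in\ga$, using $ad-bc=1$,
$$
\frac{ah(z)+b}{ch(z)+d}-\frac{az+b}{cz+d}\;=\;\frac{h(z)-z}{(ch(z)+d)(cz+d)}.
$$
Hence (all identities being between meromorphic functions) $h(\gamma z)=\gamma h(z)$ for all $\gamma\in\ga$, $z\in\BH$ if and only if $h(\gamma z)-\gamma z=\dfrac{h(z)-z}{(ch(z)+d)(cz+d)}$; taking reciprocals, this is equivalent to $\widehat h(\gamma z)=(ch(z)+d)(cz+d)\,\widehat h(z)$, and since $ch(z)+d=(cz+d)+\dfrac{c}{\widehat h(z)}$ (because $h(z)=z+1/\widehat h(z)$), dividing through by $(cz+d)^2$ turns it into exactly
$$
(\widehat h\,|_2\,\gamma)(z)=\widehat h(z)+\frac{c}{cz+d},
$$
which is condition \eqref{quasi trans}. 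Since $\q\setminus\{g_0\}$ is by definition the set of meromorphic functions on $\BH$ whose quasi-automorphic polynomial is $P_{g,z}(X)=g(z)+X$, i.e. exactly those satisfying \eqref{quasi trans}, this yields $h\in\e\setminus\{h_0\}\iff\widehat h\in\q\setminus\{g_0\}$. Reading the chain of equivalences the other way: for $g\in\q\setminus\{g_0\}$ one has $\widehat{h_g}=\tfrac{1}{1/g}=g$, which satisfies \eqref{quasi trans}, whence $h_g\in\e$. Thus both maps are well defined. This is precisely the computation of \prf{prop1}, now used as an equivalence rather than a one-way implication.

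It remains to record that the maps are mutually inverse, which is immediate algebra: $\widehat{h_g}=\dfrac{1}{h_g-h_0}=\dfrac{1}{1/g}=g$ and $h_{\widehat h}=h_0+\dfrac{1}{\widehat h}=h_0+(h-h_0)=h$, and the distinguished pair $h_0\leftrightarrow g_0$ was matched above. I do not expect a substantial obstacle here: the proposition is the bookkeeping of one polynomial identity together with $ad-bc=1$. The only points that demand a little care are keeping track of meromorphy under inversion and treating the adjoined elements $h_0$ and $g_0$ separately, so that one never inverts the identically zero or the identically infinite function.
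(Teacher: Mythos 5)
Your proof is correct and is essentially the argument the paper intends: the paper's own proof is a one-line deferral to \cite{ElbSeb}, and the computation you write out --- the identity $\gamma h(z)-\gamma z=\dfrac{h(z)-z}{(ch(z)+d)(cz+d)}$ turning equivariance of $h$ into the transformation law \eqref{quasi trans} for $\widehat h=1/(h-h_0)$, plus the trivial mutual-inverse check and the bookkeeping of the pair $h_0\leftrightarrow g_0$ --- is exactly the content of that reference and the same calculation as in \prf{prop1}. Your care with the degenerate elements (never inverting the identically zero or identically infinite function) is consistent with the paper's conventions and does not change the argument.
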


\begin{proof}
The proof in \cite{ElbSeb} given for modular subgroups also works  for any subgroup of $\ga$ of $\PR$.
\end{proof}
\begin{eg} $ $
\item If $\ga=\PZ$ and $g= \frac{i \pi}{6}E_2$. Then any meromorphic equivariant function  $h$ of $\e$ has the form
$$
h=h_0+\frac{1}{f+ g},
$$
where $f$ is a unrestricted modular forms of weight $2$.
\end{eg}

We have seen in \eqref{log quasi} that the logarithmic derivative of an unrestricted automorphic form $f$ is an unrestricted quasi-automorphic form of weight 2 and depth 1. More explicitly, we have the following:\\
 If $f \in M_u(\ga,k,p,\nu),~k\neq0,$ then
$$h=h_0+\frac{k}{L_f} \in \e.$$

\begin{defn}
After \cite{ElbSeb}, the class of equivariant functions constructed above is referred to as the class of \textit{rational} equivariant functions.
\end{defn}

\section{Classification of unrestricted automorphic forms for non discrete subgroups of $\PR$ }
\label{9}
The classification of automorphic forms for a non discrete (non elementary) subgroup of $\PR$ was raised by many authors, see \cite{knopp1,Ber1,DJ} and
the references therein. Here we provide a new and simple proof of the complete classification of these forms.

\begin{lem}
If $\ga$ is a  non discrete subgroup $\ga$ of $\PR$, then the dimension of $ M_u(\ga,k,\nu)$ is less then one.
\end{lem}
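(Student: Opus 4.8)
The plan is to reduce the statement to the assertion that a non-discrete subgroup of $\PR$ has no non-constant $\ga$-invariant meromorphic function on $\BH$. Suppose $f_1,f_2\in M_u(\ga,k,\nu)$ are both nonzero and set $g:=f_1/f_2$, a meromorphic function on $\BH$. From the defining relation $f_i(\g z)=\nu(\g)\,j_\g(z)^k\,f_i(z)$ the common automorphy factor $\nu(\g)j_\g(z)^k$ cancels in the quotient, so $g(\g z)=g(z)$ for all $\g\in\ga$ and $z\in\BH$; thus $g$ is $\ga$-invariant. Granting that such a $g$ must be a constant $c$, we get $f_1=cf_2$ with $c\in\BC^{\times}$, so any two nonzero elements of $M_u(\ga,k,\nu)$ are proportional and $\dim M_u(\ga,k,\nu)\le 1$. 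For $k\ne0$ and $\ga$ non-elementary one can in fact bypass this reduction: $f\mapsto h_f=z+k f/f'$ sends a nonzero form into $\e=\{h_0\}$ by \tf{th classification 1}, forcing $f/f'\equiv 0$, which is impossible, so $M_u(\ga,k,\nu)=0$; but the reduction above is what lets us handle all remaining cases at once.

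So let $g$ be a $\ga$-invariant meromorphic function on $\BH$ with $\ga$ non-discrete, and let us show $g$ is constant. First I would produce a null sequence in $\ga$: since $\ga$ is not discrete there is a sequence $T_n\in\ga$ converging in $\GC$ to some $T$ with $T_n\ne T$ for infinitely many $n$; discarding terms we may take the $T_n$ pairwise distinct, and then $\g_n:=T_{n+1}T_n^{-1}$ lies in $\ga\setminus\{I\}$ and converges to $I$ (after a further subsequence, with the $\g_n$ pairwise distinct). Viewed as M\"{o}bius transformations the $\g_n$ converge to $\mathrm{id}_{\BH}$ uniformly on compact subsets of $\BH$. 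Next I would fix a base point $z_0\in\BH$ that is not a pole of $g$ and is not a fixed point of any $\g_n$; this is possible since the poles of $g$ are discrete in $\BH$ and each $\g_n\ne I$ fixes at most one point of $\BH$, so only countably many points are excluded.

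The heart of the argument is to show that, after passing to yet another subsequence, the points $w_n:=\g_n z_0$ are pairwise distinct. If not, some value $w$ occurs infinitely often, say $\g_{n_j}z_0=w$ for all $j$; then $\g_{n_1}^{-1}\g_{n_j}$ fixes $z_0$ for every $j$, and since $\g_{n_j}\to I$ the continuity of multiplication and inversion in $\PR$ gives $\g_{n_1}^{-1}\g_{n_j}\to\g_{n_1}^{-1}$, so $\g_{n_1}^{-1}$, hence $\g_{n_1}$, fixes $z_0$ because the stabilizer of $z_0$ in $\PR$ is closed; this contradicts the choice of $z_0$. Thus every value is attained finitely often, and since $(w_n)$ has infinitely many terms it takes infinitely many distinct values, from which a subsequence of distinct $w_n$ is extracted. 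I expect this step, ruling out that the orbit $\{\g_n z_0\}$ collapses to finitely many points, to be the main obstacle, together with the bookkeeping that converts non-discreteness into the null sequence $\g_n\to I$.

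To finish, invariance of $g$ and the fact that $z_0$ is not a pole give $g(w_n)=g(\g_n z_0)=g(z_0)$ for every $n$, while $w_n=\g_n z_0\to z_0$ by the local uniform convergence $\g_n\to\mathrm{id}$. Hence $g-g(z_0)$ is a meromorphic function on $\BH$ whose zero set contains the infinitely many distinct points $w_n$, which accumulate at the interior point $z_0$; by the identity theorem $g\equiv g(z_0)$. Therefore every $\ga$-invariant meromorphic function on $\BH$ is constant, and by the reduction in the first paragraph the lemma follows.
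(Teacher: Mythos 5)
Your proof is correct and follows essentially the same route as the paper: reduce to showing that a $\ga$-invariant meromorphic function is constant, extract a sequence $\g_n\to I$ in $\ga\setminus\{I\}$ from non-discreteness, and apply the identity theorem to $g-g(z_0)$ along the orbit points $\g_n z_0\to z_0$. Your treatment is in fact more careful than the paper's, which simply takes $x\notin Fix(\ga,\Cc)$ and asserts that the zeros accumulate, whereas you justify why the points $\g_n z_0$ can be taken pairwise distinct.
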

\begin{proof}
Suppose that $f\in M_u(\ga,k,\nu)\setminus\{0\}$. If $g\in M_u(\ga,k,\nu)$ then $g/f$ is invariant under the action of $\ga$.
Since $\ga$ is not discrete, we have a sequence $\{\g_n\}$ of elements of $\ga$ such that $\g_n\ra I$ as $n\ra\infty$.
Take any point $x$ in $\BH \setminus Fix(\ga,\Cc)$ which is not a pole of $g/f$, and consider the meromorphic function $G = g/f -g(x)$ and the sequence $x_n= \g_n x$. Then we have
$$
G(x)=0,~G(x_n)=0, \mbox{ and } x_n\ra x \mbox{ as } n\ra\infty.
$$
Therefore, $G$ is meromorphic on $\BH$, and its set of zeros has an accumulation point in $\BH$. It follows that
$G$ must be identically zero, and so $g$ is a multiple of $f$. Hence the dimension of $ M_u(\ga,k,\nu)$ is one.

\end{proof}

\begin{thm}
If $\ga$ is a non elementary non discrete subgroup $\ga$ of $\PR$, then we have
\begin{\enu}
\item If $k\neq0$, then  $M_u(\ga,k,\nu)=\{0\}$.
\item If $k=0$, then  $M_u(\ga,k,\nu)= \BC$ when $\nu=1$, otherwise it is trivial.  
\end{\enu}
\end{thm}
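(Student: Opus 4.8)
The plan is to reduce everything to the classification $\e=\{h_0\}$ for non-discrete non-elementary groups (\tf{th classification 1}), using the dictionary between automorphic forms of nonzero weight and rational equivariant functions from \sef{construction of equi functions}, together with one differentiation step to pass from weight $0$ to weight $2$.

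For part (1) I would argue by contradiction. Suppose $k\neq 0$ and that $f\in M_u(\ga,k,\nu)$ is nonzero. By \eqref{log quasi} and the construction in \sef{construction of equi functions}, the function $h_f=h_0+k/L_f$, with $L_f=f'/f$, is a $\ga$-equivariant function (equivalently $\widehat{h_f}=\frac1k L_f\in\q$). The key point is that $h_f\neq h_0$: since $f\not\equiv 0$, its logarithmic derivative $L_f$ is a genuine meromorphic function, so $k/L_f=kf/f'$ is a nonzero meromorphic function, provided $f'\not\equiv 0$; and $f'\equiv 0$ is impossible here, for then $f$ would be a nonzero constant, whence $f|_k\g=\nu(\g)f$ would force $(cz+d)^{-k}$ to be independent of $z$ for every $\g=\binom{a\ b}{c\ d}\in\ga$, hence (as $k\neq 0$) $c=0$ and $\g(\infty)=\infty$ for all $\g$, making $\{\infty\}$ a finite $\ga$-orbit and $\ga$ elementary — a contradiction. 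Thus $h_f\in\e\setminus\{h_0\}$, contradicting \tf{th classification 1}, and therefore $M_u(\ga,k,\nu)=\{0\}$.

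For part (2), take $f\in M_u(\ga,0,\nu)$, so $f(\g z)=\nu(\g)f(z)$ for all $\g\in\ga$ and $z\in\BH$. Differentiating in $z$ and using $\frac{d}{dz}(\g z)=(cz+d)^{-2}$ gives $f'(\g z)=\nu(\g)(cz+d)^2 f'(z)$, i.e. $(f'|_2\g)(z)=\nu(\g)f'(z)$; since a multiplier system of weight $0$ is simply a homomorphism from $\ga$ into the unit circle, it is equally a multiplier system of weight $2$, so $f'\in M_u(\ga,2,\nu)$. Part (1) then yields $f'\equiv 0$, so $f$ is a constant $c$. If $\nu=1$, every constant is $\ga$-invariant and hence lies in $M_u(\ga,0,1)$, giving $M_u(\ga,0,1)=\BC$; if $\nu\neq 1$, the identity $c=\nu(\g)c$ valid for all $\g$ forces $c=0$, so the space is trivial. (For the case $\nu=1$ one could equally invoke the preceding Lemma, since the constants already span a one-dimensional subspace.)

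I do not expect a genuine obstacle: granted \tf{th classification 1}, the argument is essentially formal. The two small points that require attention are the verification that $h_f$ is distinct from the identity — which is where non-elementarity enters, via the exclusion of nonzero constant forms of nonzero weight — and the remark that a weight-$0$ multiplier system is automatically a weight-$2$ one, which holds because the slash cocycle is trivial in even integer weight.
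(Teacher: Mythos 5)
Your proof is correct and follows essentially the same route as the paper: for $k\neq 0$ you pass to the rational equivariant function $h_f=h_0+k/L_f$ and invoke \tf{th classification 1}, and for $k=0$ you differentiate to land in weight $2$ and reduce to part (1). The only difference is that you spell out details the paper leaves implicit (why $h_f\neq h_0$, i.e.\ why a nonzero constant cannot be a form of nonzero weight for a non-elementary group, and why a weight-$0$ multiplier system is also one of weight $2$), which is a welcome tightening rather than a departure.
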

\begin{proof}
Suppose that $k\neq0$, and that there is an element  $f\in M_u(\ga,k,\nu)-\{0\}$. Then
$$h_f=h_0+\frac{k}{L_f} \in \e\,.$$
Meanwhile, using \tf{th classification 1}, we have $\e=\{h_0\}$. Hence
$$\frac{k}{L_f}=0,$$
which is impossible. Thus, for a non elementary non discrete subgroup $\ga$ of $\PR$, $M_u(\ga,k,\nu)=\{0\}$.

If $k=0$ and  $f\in M'(\ga,0,\nu)$, then the derivative $f'$ of $f$ is an element of $M_u(\ga,2,\nu)$ and so $f'=0$, this implies that  $f$ is a constant  function. The conclusion is clear.
\end{proof}

\begin{rem}
Let $f$ be an unrestricted automorphic form on  $\ga$, and $L$ an element of  $\PR$. Then  $f|_kL$ is an unrestricted automorphic from  on $L^{-1}\ga L$. Hence it suffices to make classification up to a conjugation in $\PR$.
\end{rem}

Now suppose that $\ga$ is a non discrete elementary subgroup of $\PR$. If $\ga$ is an abelian, then by \tf{elementary groups}, it  contains beside the identity, only elliptic elements, or only parabolic elements, or only hyperbolic elements. After a suitable conjugation in $\PR$, we can suppose that
\begin{enumerate}
\item $Fix(\ga,\Cc)=\{i,\, -i\}.$
\item $Fix(\ga,\Cc)=\{0,\, \infty\}.$
\item $Fix(\ga,\Cc)=\{\infty\}.$

\end{enumerate}

\subsubsection*{{\bf  The case $\bf Fix(\ga,\Cc)=\{i,\, -i\}$:}}
$ $

By \tf{th classification 2}, we know that
$$\e=\{i,\, -i\}\, \cup \, \big\{ \g \in \PC;\, Fix(\g)=Fix(\ga,\Cc) \, \big\}.$$
If  $f$ is an element of $M_u(\ga,k,\nu)\setminus\{0\}$, $k\neq0$, then
$$h_0+\frac{k}{L_f}= h_f \in \e\,.$$
Suppose that $h_f \in \e\setminus\{i,\, -i\}$, and let $\lambda$ be the Cayley transformation, see Equation \ref{map1}. If  $w=\lambda(z)$, then
$$\begin{array}{ccc}
\displaystyle\frac{f'(z)}{f(z)}& = & \displaystyle k \frac{1}{h_f(z) -z} \\
&&\\
\displaystyle\frac{f'(\lambda^{-1}(w))}{f(\lambda^{-1}(w))}& = & \displaystyle k \frac{1}{h_f(\lambda^{-1}(w)) -\lambda^{-1}(w)} \\
&&\\
\displaystyle(w-1)^2\frac{(f(\lambda^{-1}(w)))'}{2if(\lambda^{-1}(w))}& = & \displaystyle k \frac{1}{\lambda^{-1}(\lambda h_f\lambda^{-1}(w)) -\lambda^{-1}(w)}.
\end{array}
$$
Hence, if $$g=f\lambda^{-1}\,,\mbox{ and}\,, h_g=\lambda h_f \lambda^{-1},$$
 then we have
$$
\frac{g'(w)}{g(w)}= k \frac{h_f -1}{(w-1)(h_f(w) -w)},~ \mbox{for all}~  w\in \BU.
$$
Since  $h_f$ fixes $i$ and $-i$, then there exists $\al$ in $\BC$ such that  $h_g(w)=\al w$, for all $w$ in $\BU$.
If $\al=1$, then $h_g$ is the identity map, and so $h_f=h_0$. But this implies that $f'/f=\infty$, which is absurd. If we suppose that $\al\neq 1$, then
$$
\frac{g'(w)}{g(w)}=k \frac{\al w -1}{(\al -1)w(w-1)}=  \frac{k}{(\al -1)w}+ \frac{k}{w-1} ,~ \mbox{for all}~  w\in \BU.
$$
Equating the residues at $0$ of the two expression, we get
$$
 \frac{k}{(\al -1)}=\mbox{order}_0(g)=\mbox{order}_0(f)=n_f,
$$
This implies that there exists $c$ in $\BC$ such that
$$
g(w)=c w^{n_f}(w-1)^k,~ \mbox{for all}~  w\in \BU,
$$
\ie,
$$
f(z)= c(\lambda(z))^{n_f}(\lambda(z)-1)^k,~ \mbox{for all}~  z\in \BH.
$$

In case $ h_f=-i$, then
$$
\frac{f'(z)}{f(z)} = -k \frac{1}{z+i},~ \mbox{for all}~  z\in \BH,
$$
and so
$$
f(z)=c (z+i)^{-k},~ \mbox{for all}~  z\in \BH,
$$
for some constant $c$ in $\BC$.

In case $ h_f=i$, then
$$
\frac{f'(z)}{f(z)} = -k \frac{1}{z-i},~ \mbox{for all}~  z\in \BH,
$$
Again by equating the residues at $i$ of the two expression, we find that $k$ is an integer, and that
$$
f(z)=c (z-i)^{-k},~ \mbox{for all}~  z\in \BH,
$$
for some constant $c$ in $\BC$.

\subsubsection*{{\bf  The case $ \bf Fix(\ga,\Cc)=\{0,\, \infty\}$:}}
$ $

By \tf{th classification 2}, we know that
$$\e=\{0,\, \infty\}\, \cup \, \big\{ \g \in \PC;\, Fix(\g)=Fix(\ga,\Cc) \, \big\}.$$
If  $f$ is an element of $M_u(\ga,k,\nu)\setminus\{0\}$, $k\neq0$, then
$$h_0+\frac{k}{L_f}= h_f \in \e\,.$$
Suppose that $h_f \in \e\setminus\{\infty\}$.  Then there exists $\al$ in $\BC$ such that  $h_f(z)=\al z$, for all $z$ in $\BH$.
As above,  the case  $\al=1$ is excluded.  If we suppose that $\al\neq 1$, then
$$
\frac{f'(z)}{f(z)} = k \frac{1}{h_f(z) -z},~ \mbox{for all}~  z\in \BH,
$$
$$
\frac{f'(z)}{f(z)} = k \frac{1}{(\al-1) z},~ \mbox{for all}~  z\in \BH,
$$
and so
$$
f(z)=c z^{\frac{k}{\al-1}},~ \mbox{for all}~  z\in \BH,
$$
for some constant $c$ in $\BC$.

In case $ h_f=\infty$, as above, we find that $f$ is a constant, and  the commutativity with $\ga$ implies that $f$ is the zero function, which is absurd.

\subsubsection*{{\bf  The case $\bf Fix(\ga,\Cc)=\{ \infty\}$:}}

By \tf{th classification 2}, we know that
$$\e=\{\infty\}\, \cup \, \big\{ \g \in \PC;\, Fix(\g)=Fix(\ga,\Cc) \, \big\}.$$
If  $f$ is an element of $M_u(\ga,k,\nu)\setminus\{0\}$, $k\neq0$, then
$$h_0+\frac{k}{L_f}= h_f \in \e\,.$$
Suppose that $h_f \in \e\setminus\{\infty\}$.  Then there exists $\al$ in $\BC$ such that  $h_f(z)= z + \al$, for all $z$ in $\BH$.
If $\al=0$, then $h_f=h_0$ which is absurd. If we suppose that $\al\neq 0$, then
$$
\frac{f'(z)}{f(z)} = k \frac{1}{h_f(z) -z},~ \mbox{for all}~  z\in \BH,
$$
$$
\frac{f'(z)}{f(z)} = \frac{k}{\al},~ \mbox{for all}~  z\in \BH,
$$
and so
$$
\displaystyle f(z)=c e^{(k/\al) z},~ \mbox{for all}~  z\in \BH,
$$
for some constant $c$ in $\BC$.

The case $ h_f=\infty$ is excluded since we will find that $f$ is a constant, and  the commutativity with $\ga$ will imply that $\ga=\{h_0\}$, thus contradicting the non discreteness of $\ga$.

We summarize the above discussion in the following theorem.
\begin{thm}
Let $\ga$ be an abelian  non discrete elementary subgroup of $\PR$, $k$ be a non zero reel number, $\lambda$ be the Cayley transformation.
After a suitable conjugation in $\PR$, we can suppose that
\begin{enumerate}
\item $Fix(\ga,\Cc)=\{i,\, -i\}.$
\item $Fix(\ga,\Cc)=\{0,\, \infty\}.$
\item $Fix(\ga,\Cc)=\{\infty\}.$
\end{enumerate}
If $M_u(\ga,k,\nu)$ is non trivial, then its dimension is $1$, and it is generated by an element $f$ having one of the following forms:
\begin{enumerate}
\item If $Fix(\ga,\Cc)=\{i,\, -i\}$, then for all $z$ in $\BH$
\begin{itemize} 
\item $\displaystyle f(z)=(\lambda(z))^{n_f}(\lambda(z)-1)^k$, $\displaystyle n_f=order_i(f)$.
\item $\displaystyle f(z)=(z+i)^{-k}$.
\item $\displaystyle f(z)=(z-i)^{-k}$.
\end{itemize}

\item If $Fix(\ga,\Cc)=\{0,\, \infty\}$,  then there exists an $\al$ in $\BC$, such that 
$$ f(z)=z^{\frac{k}{\al-1}},~ \mbox{for all}~  z\in \BH.$$

\item If $Fix(\ga,\Cc)=\{ \infty\}$,  then there exists an $\al$ in $\BC$, such that 
$$ \displaystyle f(z)=e^{(k/\al) z},~ \mbox{for all}~  z\in \BH.$$

\end{enumerate}
\end{thm}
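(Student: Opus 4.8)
The plan is to combine the one-dimensionality lemma just established with the description of $\e$ given in \tf{th classification 2}, and then, in each of the three normalized configurations, to integrate the explicit logarithmic derivative that any generator must satisfy.

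First I would reduce to the three listed cases. By \tf{elementary groups}(1), an abelian elementary subgroup of $\PR$ has, besides the identity, only elliptic, only parabolic, or only hyperbolic elements, so $Fix(\ga,\Cc)$ is of the form $\{\al,\overline\al\}$ with $\al\in\BH$, or $\{\al\}$ with $\al\in\Rr$, or $\{\al,\bt\}$ with $\al,\bt\in\Rr$; a conjugation in $\PR$ moves these to $\{i,-i\}$, $\{\infty\}$, $\{0,\infty\}$ respectively, and by the remark above such a conjugation carries $M_u(\ga,k,\nu)$ isomorphically onto the corresponding space for the conjugated group. By the preceding lemma $\dim M_u(\ga,k,\nu)\le 1$, so it is enough to identify one nonzero $f\in M_u(\ga,k,\nu)$.

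Given such an $f$ with $k\ne0$, the rational equivariant function $h_f=h_0+k/L_f$ lies in $\e$ by \prf{prop1}, and \tf{th classification 2}(1) says $h_f$ is either one of the at most two constants equal to a point of $Fix(\ga,\Cc)$, or a \mt with fixed-point set $Fix(\ga,\Cc)$; in each instance $h_f$ is determined up to one complex parameter. Inverting the defining identity gives $f'/f=L_f=k/(h_f(z)-z)$, a logarithmic derivative that I can integrate explicitly. For $Fix(\ga,\Cc)=\{0,\infty\}$, a nonconstant $h_f$ equals $\al z$ with $\al\ne1$ ($\al=1$ would give $h_f=h_0$ and $L_f=\infty$), so $f'/f=k/((\al-1)z)$ and $f(z)=c\,z^{k/(\al-1)}$; the value $h_f=\infty$ forces $f$ constant and then $\ga$-covariance forces $f\equiv0$. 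For $Fix(\ga,\Cc)=\{\infty\}$, a nonconstant $h_f$ is a translation $z+\al$ with $\al\ne0$, so $f'/f=k/\al$ and $f(z)=c\,e^{(k/\al)z}$, while $h_f=\infty$ is again impossible. For $Fix(\ga,\Cc)=\{i,-i\}$, I would move to $\BU$ via the Cayley transformation: setting $g=f\circ\lambda^{-1}$ and $h_g=\lambda h_f\lambda^{-1}$, the covariance relation becomes a relation between $g'/g$ and $h_g$, and since $h_g$ fixes $0$ and $\infty$ we get $h_g(w)=\al w$; for $\al\ne1$ the right-hand side splits as $\frac{k}{(\al-1)w}+\frac{k}{w-1}$, and matching the residue at $0$ against $\mbox{order}_0(g)=\mbox{order}_0(f)=n_f$ yields $k/(\al-1)=n_f$, hence $g(w)=c\,w^{n_f}(w-1)^k$, that is $f(z)=c\,(\lambda(z))^{n_f}(\lambda(z)-1)^k$. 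The two remaining subcases $h_f\equiv -i$ and $h_f\equiv i$ give $f'/f=-k/(z+i)$ and $f'/f=-k/(z-i)$, hence $f(z)=c\,(z+i)^{-k}$ and $f(z)=c\,(z-i)^{-k}$.

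The delicate point is the configuration $Fix(\ga,\Cc)=\{i,-i\}$: one must pass to the disc model to bring the equivariant function into a clean normal form, and one must use that meromorphy of $f$ on $\BH$ forces the residue of $f'/f$ at any interior point to be an integer — this is precisely what makes $n_f=\mbox{order}_0(f)$ a legitimate exponent and, in the subcase $h_f\equiv i$, forces $k\in\BZ$. The rest is routine: integrate the explicit logarithmic derivative and check that each resulting $f$ is genuinely $\ga$-covariant with the prescribed multiplier, which follows automatically once $h_f$ commutes with $\ga$.
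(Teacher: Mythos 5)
Your proposal follows the paper's own argument essentially step for step: the dimension-at-most-one lemma, the passage to $h_f=h_0+k/L_f\in\e$, the case analysis driven by \tf{th classification 2}, and the explicit integration of $f'/f=k/(h_f(z)-z)$ in each normalized configuration, including the Cayley-transform and residue computation that pins down the exponent $n_f$ in the $\{i,-i\}$ case and forces $k\in\BZ$ when $h_f\equiv i$. This matches the paper's treatment, so no further comment is needed.
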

Now, we come to the case when $\ga$ is non abelian non discrete elementary subgroup of $\PR$. Then by \tf{elementary groups}, either $\ga$ is conjugated in $\PR$ to a group generated by $h(z)=-1/z$ and by elements of the form $g(z)=lz$ ($l>1$). This case is characterized by 
$$\bigcap\limits_{\g \in \ga}  Fix(\g)=\{\emptyset\}.$$
 Or $\ga$ is conjugated in $\PR$ to a group whose elements have the form $az+b$, $a,b$ in $\BR$, and contains both parabolic and  hyperbolic elements. This case is characterized by $$\bigcap\limits_{\g \in \ga}Fix(\g)\neq\{\emptyset\}.$$

\begin{thm}
Let $\ga$ be an non abelian non discrete elementary subgroup of $\PR$, $k$ be a non zero reel number, $\nu$ a MS.
\begin{enumerate}
\item Suppose that $\bigcap\limits_{\g \in \ga}Fix(\g)=\{\emptyset\}$, and that $M_u(\ga,k,\nu)$ is non trivial. Then its dimension is $1$ and, after a conjugation in $\PR$, it is generated by:
$$\displaystyle f(z)=z^{\frac{-k}{2}},~ \mbox{for all}~  z\in \BH.$$
\item If $\bigcap\limits_{\g \in \ga}Fix(\g)\neq\{\emptyset\}$, then $M_u(\ga,k,\nu)$ is trivial.
\end{enumerate}
\end{thm}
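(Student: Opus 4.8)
The strategy is to reduce the problem, via the correspondence between equivariant functions and automorphic/quasi-automorphic data, to the already-established description of $\e$ in \tf{th classification 2}, exactly as in the two preceding theorems on the abelian and non-elementary cases. After a conjugation in $\PR$ we may assume $\ga$ has one of the two normal forms of \tf{elementary groups}: in case (1) it is generated by $h(z)=-1/z$ and the maps $g(z)=lz$ ($l>1$), with $\bigcap_{\g\in\ga}Fix(\g)=\emptyset$; in case (2) it consists of maps $z\mapsto az+b$ with $a,b\in\BR$ and contains both parabolic and hyperbolic elements, with a common fixed point at $\infty$.

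For case (1), suppose $f\in M_u(\ga,k,\nu)\setminus\{0\}$ with $k\neq0$. Then $h_f=h_0+k/L_f$ lies in $\e$, and by the non-abelian part of \tf{th classification 2} we have $\e=\{h_0\}$ or $\e=\{h_0,\g^{-1}(-h_0)\g\}$ for some $\g\in\PR$. The value $h_f=h_0$ is impossible since it forces $k/L_f\equiv 0$. So after absorbing the conjugation we may take $h_f(z)=-z$ (working in whatever model makes the second element of $\e$ equal to $-h_0$; concretely, $\widetilde h=m_{-1}$ as in the proof of \tf{th classification 3}). Then $k/L_f = h_f(z)-z = -2z$, so $f'/f = -k/(2z)$, giving $f(z)=c\,z^{-k/2}$; a one-dimensional space as claimed. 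One should also record that such an $f$ does satisfy the equivariance/transformation constraints only when the multiplier data is consistent, but since the statement only asserts ``if $M_u$ is non-trivial then it has dimension $1$ and is generated by $z^{-k/2}$,'' the above computation suffices together with the dimension bound from the lemma preceding \tf{th classification 1}'s section (the $\dim M_u(\ga,k,\nu)\le 1$ lemma for non-discrete $\ga$).

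For case (2), again take $f\in M_u(\ga,k,\nu)\setminus\{0\}$, $k\neq0$, so $h_f\in\e$. By \tf{th classification 2}(2) (non-abelian branch), and since this $\ga$ has a common fixed point and contains both parabolic and hyperbolic elements, one is in the situation where $\e=\{h_0\}$: indeed, the argument in the first bullet of the non-abelian part of the proof of \tf{th classification 2} shows that commutativity with both a parabolic element ($z\mapsto z+b$) and a hyperbolic one ($z\mapsto az$) forces any equivariant function $h(z)=cz+d$ to satisfy $c=1$, $d=0$. Hence $h_f=h_0$, which forces $k/L_f\equiv 0$, a contradiction. Therefore no nonzero $f$ exists and $M_u(\ga,k,\nu)$ is trivial. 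The main obstacle is purely bookkeeping: in case (1) one must carry the conjugation that normalizes the second element of $\e$ through to the formula for $f$, and verify that the exponent $-k/2$ is what emerges from $h_f(z)-z$ after this normalization; everything else is a direct appeal to \tf{th classification 2} and the already-proven residue/ODE computations used in the abelian case above.
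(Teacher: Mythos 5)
Your proposal is correct and follows essentially the same route as the paper: both cases are reduced, via the rational equivariant function $h_f=h_0+k/L_f$ and Theorem~\ref{th classification 2}, to solving $f'/f=k/(h_f(z)-z)$ when $h_f=-h_0$ (yielding $cz^{-k/2}$) and to a contradiction when $\e=\{h_0\}$. Your additional remarks on the dimension bound and on matching the two normal forms of Theorem~\ref{elementary groups} to the two cases are consistent with, and slightly more explicit than, the paper's argument.
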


\begin{proof}
The first case:  By  \tf{th classification 2}, we know  that after a conjugation  in $\PR$, we may assume that
$$\e= \big\{ h_0,\, -h_0 \big\}.$$

If  $f$ be an element of $M_u(\ga,k,\nu)\setminus\{0\}$, $k\neq0$, then
$$h_0+\frac{k}{L_f}= h_f \in \e\,.$$
As seen above, the case $h_f=h_0$ leads to a contradiction. If  $h_f=-h_0$ then
$$
\frac{f'(z)}{f(z)} = k \frac{1}{h_f(z) -z},~ \mbox{for all}~  z\in \BH,
$$
$$
\frac{f'(z)}{f(z)} =  \frac{-k}{2 z},~ \mbox{for all}~  z\in \BH,
$$
and so
$$
f(z)=c z^{\frac{-k}{2}},~ \mbox{for all}~  z\in \BH.
$$
for some constant $c$ in $\BC$.

The second case: By  \tf{th classification 2}, we have
$$\e= \big\{ h_0 \big\}.$$
If  $f$ be an element of $M_u(\ga,k,\nu)\setminus\{0\}$, $k\neq0$, then
$$h_0+\frac{k}{L_f}= h_f \in \e\,,$$
and so $h_f=h_0$, which leads to a contradiction. Hence the result.
\end{proof}

\section{Zeros of weight 2 depth 1  unrestricted quasi-automorphic forms }

We have the following result which is a direct consequence of \cf{cor1}.

\begin{thm}
Let $\ga$ be a Fuchsian group of the first kind, and suppose that $h \in \e-\{h_0\}$. Then for any $z\in \Cc$,
$h^{-1}(\{z\})$  contains infinitely many non  $\ga-$equivalent points.
In particular, for $z=\infty$, we find that  $h$ has infinitely many non $\ga$-equivalent poles in $\BH$.
\end{thm}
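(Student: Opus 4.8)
The plan is to reduce the statement to \cf{cor1} by exhibiting a point of the limit set $\La$ in the image of $h$. Since $\ga$ is a Fuchsian group of the first kind, $\La=\Rr$, so it suffices to show that any $h\in\e-\{h_0\}$ takes at least one value in $\Rr=\BR\cup\{\infty\}$. Once this is established, \cf{cor1} immediately gives that $h$ is surjective and that $h^{-1}(\{z\})$ contains infinitely many non $\ga$-equivalent points for every $z\in\Cc$; specializing to $z=\infty$ yields the assertion about poles.

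First I would dispose of the case where $h$ already assumes a real value (or the value $\infty$, i.e. has a pole): then $\La\cap Im(h)\neq\emptyset$ trivially and we are done. So assume for contradiction that $Im(h)\subset\BC-\BR$; since $\BH$ is connected and $h$ is meromorphic (in particular continuous into $\Cc$) with image avoiding $\Rr$, the image lies entirely in $\BH$ or entirely in the lower half-plane $\BH^-$. If $h(\BH)\subseteq\BH$, then $h$ is a holomorphic self-map of $\BH$, i.e. $h\in\eh$, and \tf{th classification 3} forces $h=h_0$, contradicting $h\neq h_0$. If $h(\BH)\subseteq\BH^-$, I would pass to the reflection $\widetilde h(z)=\overline{h(\bar z)}$ exactly as in the proof of \tf{th classification 3}: the composite $\widetilde h\circ\widetilde h$ restricts to a holomorphic equivariant self-map of $\BH$, hence equals $h_0$ by \tf{th classification 3}, so $h$ is a bijection of $\BH$ onto $\BH^-$ and $-h$ is an automorphism of $\BH$; the same argument as there (the involution $\widetilde h=m_k$ with $k^2=1$, the case $k=-1$ forcing $\ga$ to fix $0$ hence to be elementary) rules this out as well. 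Either way we reach a contradiction, so $Im(h)$ must meet $\Rr$.

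The only genuine subtlety is the dichotomy ``image in $\BH$ or image in $\BH^-$''— this uses that a non-constant meromorphic function whose image avoids the circle $\Rr\subset\Cc$ cannot have image straddling both components of $\Cc-\Rr$, which follows from connectedness of $\BH$ together with the fact that $h$ is non-constant (if $h$ were constant it would equal some $c\in\BH$, but constants commute with $\ga$ only if fixed by all of $\ga$, impossible for non-elementary $\ga$, and in any case $h\neq h_0$ does not preclude a constant a priori, so one should note a constant in $\Rr$ again lands us in the easy case). Everything else is a direct invocation of the already-proved \tf{th classification 3} and \cf{cor1}, so there is no real computational obstacle; the main point is simply to observe that ruling out real values of $h$ forces $h$ into the holomorphic-equivariant setting already classified.

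Finally, for the ``in particular'' clause: apply the conclusion with $z=\infty$. The set $h^{-1}(\{\infty\})$ is precisely the pole set of $h$ in $\BH$, and it contains infinitely many points lying in pairwise distinct $\ga$-orbits, which is the stated claim about poles.
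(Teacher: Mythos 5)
Your proposal is correct and follows essentially the same route as the paper: both arguments use the classification $\eh=\{h_0\}$ of Theorem 5.1 to force the image of $h$ to meet $\Rr=\La$, and then conclude via Corollary \ref{cor1}. The only cosmetic difference is that the paper argues directly that $h$ must have a pole (so $\infty\in Im(h)\cap\La$), whereas you argue by contraposition and re-run the lower-half-plane reflection argument --- which is actually unnecessary, since a map into $\BH^-$ is already pole-free, hence lies in $\eh$, and Theorem 5.1 applies verbatim.
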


\begin{proof}
Let $h \in \e-\{h_0\}$. If $h$ has no poles in $\BH$, then it will be a holomorphic equivariant function for $\ga$. But \tf{th classification 3} says that  the unique holomorphic $\ga$-equivariant function is $h_0$ which contradicts our assumption. Thus $h$ has at least one pole $x \in \BH$.

Since $\ga$ is not elementary, it contains an element $\g=\binom{a\ b}{c\ d}$ with $c\neq 0$, otherwise all elements of $\ga$ will fix $\infty$, and so $\ga$ will be elementary. By the equivarience property we have
$$
h(\g x)=\g h(x)=\g\infty=\frac{a}{c}\in \Rr=\La.
$$
The theorem follows from \cf{cor1}
\end{proof}

\begin{thm}
Let $\ga$ be a Fuchsian group of the first kind, and $g\in \widetilde{M_u}(\ga,2,1)$ be a non zero unrestricted quasi-automorphic forms of weight $2$ and depth $1$ on $\ga$. Then $g$ has infinitely many non $\ga$-equivalent zeros in $\BH$.
\end{thm}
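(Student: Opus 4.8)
The idea is to transfer the statement about zeros of a quasi-automorphic form into a statement about poles of an equivariant function, and then invoke the preceding theorem. Given a nonzero $g\in\widetilde{M_u}(\ga,2,1)$, I would first recall from \sef{construction of equi functions} that $g\in\q\setminus\{g_0\}$ up to adding a weight-$2$ automorphic form; more precisely, since its quasi-automorphic polynomial has the form $P_{g,z}(X)=g(z)+X$ (after normalizing the leading coefficient, which is legitimate because scaling $g$ does not change its zero set), the function
$$
h_g(z)\,=\,h_0(z)+\frac{1}{g(z)}\,=\,z+\frac{1}{g(z)}
$$
is an equivariant function for $\ga$, i.e. $h_g\in\e$. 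This is exactly \prf{prop1} together with the discussion following it. Note $h_g\neq h_0$, since $1/g$ is not identically zero.

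Next I would identify the zeros of $g$ with poles of $h_g$. Indeed, at a point $w\in\BH$ with $g(w)=0$ (and $g$ not identically zero near $w$, which holds by the identity theorem), the function $1/g$ has a pole, hence $h_g(z)=z+1/g(z)$ has a pole at $w$. Conversely every pole of $h_g$ in $\BH$ is a pole of $1/g$, hence a zero of $g$. So the zero set of $g$ in $\BH$ coincides with the pole set of $h_g$ in $\BH$, and since $g$ commutes with nothing special, the $\ga$-equivalence classes match up as well: $w$ and $w'$ are $\ga$-equivalent zeros of $g$ iff they are $\ga$-equivalent poles of $h_g$.

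Now I would apply the previous theorem (the one asserting that for a Fuchsian group of the first kind and $h\in\e-\{h_0\}$, the fiber $h^{-1}(\{z\})$ has infinitely many non-$\ga$-equivalent points for every $z\in\Cc$, in particular $h$ has infinitely many non-$\ga$-equivalent poles in $\BH$). Applying this with $h=h_g$ gives that $h_g$ has infinitely many non-$\ga$-equivalent poles in $\BH$, and by the identification above these are precisely infinitely many non-$\ga$-equivalent zeros of $g$ in $\BH$, which is the claim.

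I do not expect a serious obstacle here; the content has already been packaged into the earlier theorem and the correspondence $g\leftrightarrow h_g$. The one point that needs a word of care is the normalization of the leading coefficient of $P_{g,z}$: an arbitrary element of $\widetilde{M_u}(\ga,2,1)$ has $P_{g,z}(X)=g_0(z)+g_1(z)X$ with $g_1$ a weight-$0$ modular function (constant when $\ga$ is of the first kind and $g_1$ is required to be everywhere holomorphic — but in general $g_1$ need only be a nonzero weight-$0$ automorphic form, hence a constant times a modular function), so one must check that $g/g_1$, which has the same zeros as $g$ away from the poles/zeros of $g_1$, still lies in $\q\setminus\{g_0\}$; alternatively one argues directly that $h_g=h_0+g_1/g$ is equivariant. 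Either way this is routine, and the substance of the proof is the reduction to the poles-of-equivariant-functions theorem.
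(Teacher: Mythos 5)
Your proposal is correct and takes essentially the same route as the paper: form $h_g=h_0+1/g$, observe that $h_g\in\e$, $h_g\neq h_0$, and that the poles of $h_g$ in $\BH$ are exactly the zeros of $g$, then invoke the preceding theorem on the poles of a non-identity equivariant function for a Fuchsian group of the first kind. Your additional remark about normalizing the leading coefficient of the quasi-automorphic polynomial (so that $P_{g,z}(X)=g(z)+X$) addresses a point the paper's proof silently assumes rather than argues.
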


\begin{proof}

If $g\in \widetilde{M_u}(\ga,2,1)$ is a non zero unrestricted quasi-automorphic forms of weight $2$ depth $1$ on $\ga$, then
$$
h_g=h_0 +\frac{1}{g}
$$
is a meromorphic equivariant function. Since $g$ is meromorphic, we have $h_g\neq h_0$, and it is clear that the poles of $h_g$ are the zeros of $g$. The result follows from the above theorem.
\end{proof}

\section{Critical points of unrestricted automorphic forms }

The importance of the above theorems lies in the following striking result:
\begin{thm}
Let $\ga$ be a Fuchsian group of the first kind, and $f \in M_u(\ga,k,p,\nu),~k\neq0,$ be an unrestricted automorphic form of weight $k$ and MS $\nu$.
Then  $f$ has infinitely many non $\ga$-equivalent critical points in $\BH$.
\end{thm}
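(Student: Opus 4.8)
The plan is to deduce the statement from the rational equivariant function attached to $f$. We may assume $f\neq0$; set $L_f=f'/f$ and
$$
h_f\;=\;h_0+\frac{k}{L_f}\;=\;h_0+k\,\frac{f}{f'}.
$$
First I would observe that $f$ cannot be constant: since $\ga$ is of the first kind it contains some $\g=\binom{a\ b}{c\ d}$ with $c\neq0$, and the transformation law $f|_k\g=\nu(\g)f$ together with $k\neq0$ then forces $f$ non-constant. Hence $f'\not\equiv0$, $L_f$ is a well-defined nonzero meromorphic function on $\BH$, and by \prf{prop1} (and the discussion following it) $h_f\in\e$; moreover $h_f\neq h_0$, since $k/L_f\equiv0$ is impossible.

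Next I would locate the poles of $h_f$ in $\BH$. As $h_0$ is holomorphic on $\BH$, the function $h_f$ has a pole at a point $z_0\in\BH$ precisely when $1/L_f$ does, i.e. precisely when $L_f=f'/f$ vanishes at $z_0$. A one-line order count settles which points these are: if $f(z_0)\neq0$ then $\mathrm{ord}_{z_0}(f'/f)=\mathrm{ord}_{z_0}(f')\ge0$, which is strictly positive exactly when $f'(z_0)=0$; whereas if $\mathrm{ord}_{z_0}(f)=m\neq0$ then $\mathrm{ord}_{z_0}(f'/f)=(m-1)-m=-1<0$, so $L_f$ has a pole there and $h_f$ does not. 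Therefore the poles of $h_f$ in $\BH$ are exactly the points $z_0$ with $f(z_0)\neq0$ and $f'(z_0)=0$; in particular every pole of $h_f$ in $\BH$ is a critical point of $f$.

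Finally I would invoke the pole theorem of the previous section: since $\ga$ is a Fuchsian group of the first kind and $h_f\in\e\setminus\{h_0\}$, the function $h_f$ has infinitely many pairwise non-$\ga$-equivalent poles in $\BH$ (this is the statement obtained from \cf{cor1} via $\La=\Rr$). By the previous paragraph these poles are pairwise non-$\ga$-equivalent critical points of $f$, which proves the theorem.

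I do not expect a real obstacle here; the only points requiring a little care are the non-constancy of $f$ (this is exactly where first-kindness enters, to produce an element with $c\neq0$) and the order bookkeeping that pins down the pole set of $h_f$. It is worth remarking that it is harmless that $f$ may also acquire critical points at its multiple zeros: the poles of $h_f$ already furnish infinitely many pairwise non-$\ga$-equivalent zeros of $f'$, and they lie off the zero set of $f$.
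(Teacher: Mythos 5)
Your proposal is correct and follows essentially the same route as the paper: form the rational equivariant function $h_f=h_0+k f/f'$, observe that its poles in $\BH$ are precisely the critical points of $f$ away from the zero set of $f$, and invoke the theorem that a non-identity equivariant function for a Fuchsian group of the first kind has infinitely many non-$\ga$-equivalent poles. Your order bookkeeping at zeros and poles of $f$, and the non-constancy check, merely make explicit what the paper leaves implicit.
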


\begin{proof}
This follows from the fact that
$$h_f=h_0+k\frac{f}{f'} \in \e,$$
and that the zeros of $f$ are not poles of $h_f$ (they are fixed points of $h_f$). Thus the poles of $h_f$ are exactly the zeros of the derivative $f'$ of $f$.
\end{proof}

This theorem  can be thought  as a kind of an automorphy test:
\begin{cor}
Let $f$ be holomorphic function on $\BH$ such that
$$f(z)=\sum_{n\geq n_0}\,a_nq^n\ ,\, q=\exp(2\pi iz), \, z \in \BH.$$
If $\ga$ be a Fuchsian group of the first kind containing the translation $\gamma z= z+1,~  z\in {\mathbb H}$. Then a necessary condition for $f$ to be an automorphic form for $\ga$ is that the series
$$\sum_{n\geq n_0}\,na_nq^n\ ,\, q=\exp(2\pi iz), \, z \in \BH$$
has infinitely many non $\ga$-equivalent zeros in $\BH$.
\end{cor}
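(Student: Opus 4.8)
The plan is to read the statement directly off the preceding theorem, since the corollary is that theorem transcribed in terms of the $q$-expansion. First I would fix the data: suppose $f$ is a nonzero automorphic form for $\ga$, of weight $k$ and multiplier system $\nu$, the weight being nonzero as in the preceding theorem. Because $\ga$ contains the parabolic element $\g z=z+1$, the point $\infty$ is a cusp of $\ga$, so the series $\sum_{n\geq n_0}a_nq^n$ is exactly the expansion of $f$ at that cusp; since in addition $f$ is holomorphic on $\BH$, it is in particular an unrestricted automorphic form of weight $k$ and multiplier system $\nu$ on $\ga$, hence falls under the scope of that theorem. The only computation I need is term-by-term differentiation: from $\frac{d}{dz}q^{n}=2\pi i\,n\,q^{n}$ one obtains $f'(z)=2\pi i\sum_{n\geq n_0}n\,a_nq^{n}$, so the zero set in $\BH$ of $\sum_{n\geq n_0}n\,a_nq^{n}$ is exactly the zero set of $f'$, that is, the set of critical points of $f$ (the nonzero constant $2\pi i$ is irrelevant).

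Granting this, the argument is immediate: by the preceding theorem a nonzero automorphic form of nonzero weight on a Fuchsian group of the first kind has infinitely many non-$\ga$-equivalent critical points in $\BH$, so $\sum_{n\geq n_0}n\,a_nq^{n}$ has infinitely many non-$\ga$-equivalent zeros in $\BH$. Therefore, if $f$ is an automorphic form for $\ga$, that series must have infinitely many non-$\ga$-equivalent zeros, which is precisely the asserted necessary condition.

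I do not expect a genuine obstacle, since all the content is already in the preceding theorem; the only points needing attention are bookkeeping. One must check that the hypotheses really do place $f$ among the unrestricted automorphic forms to which the theorem applies, and in particular that the weight is nonzero: this cannot be dropped, because for weight $0$ the series $\sum_{n\geq n_0}n\,a_nq^{n}$ is, up to the constant $2\pi i$, a genuine automorphic form of weight $2$ and hence has only finitely many non-$\ga$-equivalent zeros in $\BH$. One should also note explicitly that ``critical point of $f$'' means exactly ``zero of the differentiated $q$-series''. Finally, it is the contrapositive that makes the statement an automorphy test: if $\sum_{n\geq n_0}n\,a_nq^{n}$ has only finitely many non-$\ga$-equivalent zeros in $\BH$---for example if it has only finitely many nonzero coefficients and is not identically zero---then $f$ cannot be an automorphic form for $\ga$.
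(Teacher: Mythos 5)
Your proposal is correct and matches the paper's intent exactly: the paper states this corollary without proof as an immediate consequence of the preceding theorem, and your argument (term-by-term differentiation identifies the zeros of $\sum n a_n q^n$ with the critical points of $f$, then apply the theorem) is precisely that deduction. Your remark that the weight must be nonzero is a sensible reading of the implicit hypothesis carried over from the theorem, since for weight $0$ the differentiated series is a weight-$2$ form and the conclusion can fail.
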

For example, a rational function in $q=\exp(2\pi iz)$ cannot be an automorphic form for a group larger then one generated by the above translation.

Also, a direct application of this automorphy test leads to the following result:

\begin{cor}
Let $\ga$ be a Fuchsian group of the first kind, and $f$ be a rational function of $z$ in $\BH$. Then $f$ is an an unrestricted automorphic form of some weight $k$ and MS $\nu$ if and only if $k=0$, $\nu=1$, and  $f$ is a constant.
\end{cor}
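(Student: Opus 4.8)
The plan is to derive this as an immediate corollary of the critical-points theorem just proved. Suppose $f$ is a rational function of $z$ on $\BH$ which is an unrestricted automorphic form of weight $k$ and MS $\nu$ for a Fuchsian group $\ga$ of the first kind. First I would dispose of the case $k \neq 0$: by the preceding theorem, $f$ would have infinitely many non $\ga$-equivalent critical points in $\BH$, i.e. the derivative $f'$ would have infinitely many zeros in $\BH$. But $f$ rational forces $f'$ rational, hence $f'$ has only finitely many zeros in $\BC$ unless $f' \equiv 0$; and $f' \equiv 0$ contradicts $f \in M_u(\ga,k,\nu)$ with $k \neq 0$, $f \neq 0$ (a nonzero constant is not a weight-$k$ form for $k \neq 0$, since the slash relation would force $\nu(\g) j_\g(z)^{-k} = 1$ identically, impossible for $\ga$ non-elementary). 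So $k = 0$.

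Next, with $k = 0$, the form condition reads $f(\g z) = \nu(\g) f(z)$ for all $\g \in \ga$. Taking the derivative gives $f'(\g z)\, \g'(z) = \nu(\g) f'(z)$, and since $j_\g(z)^{-2} = \g'(z)$ this says precisely that $f' \in M_u(\ga, 2, \nu)$ (weight $2$, same MS). I would then invoke the critical-points theorem (or its weight-$2$ special case already recorded) applied to $f'$: if $f' \not\equiv 0$, then being a weight-$2$ unrestricted quasi-automorphic form of depth $0 \le 1$, $f'$ has infinitely many non $\ga$-equivalent zeros in $\BH$ — again impossible for a nonzero rational function. Hence $f' \equiv 0$, so $f$ is a constant $c \neq 0$. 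Then $f(\g z) = \nu(\g) f(z)$ becomes $c = \nu(\g) c$, forcing $\nu(\g) = 1$ for all $\g$, i.e. $\nu = 1$. The converse is trivial: a constant is a weight-$0$ automorphic function with trivial MS for any group.

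The main obstacle — really the only point requiring care — is making sure the finiteness argument for rational functions is airtight on $\BH$ rather than on all of $\BC$: a rational function $f$ extends meromorphically to $\Cc$, its derivative $f'$ is again rational, and a nonzero rational function has only finitely many zeros in the whole sphere, a fortiori in $\BH$; so "infinitely many zeros in $\BH$" immediately forces the function to vanish identically. One should also confirm that "rational function of $z$" is meromorphic on $\BH$ so that it genuinely lies in the ambient space $\mero$ on which $M_u(\ga,k,\nu)$ is defined — which it is. With these points in place the corollary follows in a few lines, and I would present it in exactly that order: reduce to $k=0$ via critical points, then handle $k=0$ by differentiating and reapplying the theorem to $f' \in M_u(\ga,2,\nu)$, then extract $\nu = 1$ and constancy, and finally note the converse.
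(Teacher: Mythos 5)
Your reduction of the case $k\neq 0$ is exactly the ``direct application of the automorphy test'' the paper intends: a nonzero rational $f$ has rational derivative, hence finitely many critical points, contradicting the critical-points theorem, and a nonzero constant cannot satisfy the weight-$k$ transformation law for $k\neq 0$ on a non-elementary group. The gap is in your treatment of $k=0$. You pass to $f'\in M_u(\ga,2,\nu)$ and then invoke the theorem on zeros of weight-$2$ depth-$1$ unrestricted quasi-automorphic forms, reading ``depth $1$'' as ``depth $\le 1$''. That theorem does not apply to $f'$, for two reasons. First, it is stated for the trivial multiplier system, whereas $f'$ carries the MS $\nu$. Second, and more seriously, its proof rests on $h_g=h_0+1/g$ being equivariant, which requires the quasi-automorphic polynomial of $g$ to be exactly $g(z)+X$ (i.e.\ $g\in\q$); a genuine weight-$2$ automorphic form has quasi-automorphic polynomial $g(z)$ with no $X$-term, so $h_0+1/g$ is not equivariant and nothing follows about its zeros. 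Indeed the conclusion you want is false for depth-$0$ forms: $E_4^2/E_6$ is a nonzero weight-$2$ unrestricted modular form for $\PZ$ whose zeros in $\BH$ form a single orbit (that of $\rho$), so ``nonzero weight $2$ implies infinitely many non-equivalent zeros'' cannot be used.

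The repair is short and stays inside your framework: since $f'\in M_u(\ga,2,\nu)$ has weight $2\neq 0$, apply the critical-points theorem to $f'$ itself. If $f'\not\equiv 0$, then $f''$ would have infinitely many non-$\ga$-equivalent zeros in $\BH$; but $f''$ is rational, so $f''\equiv 0$, forcing $f$ to be affine and $f'$ to be a constant $a$, and $a\neq 0$ is incompatible with $(cz+d)^{-2}a=\nu(\g)a$ for the elements of $\ga$ with $c\neq 0$. Hence $f'\equiv 0$, $f$ is a nonzero constant $c$, and $c=\nu(\g)c$ gives $\nu=1$; the converse is trivial. With that one substitution your argument is complete and coincides with the paper's intended derivation from the automorphy test (the paper itself gives no written proof beyond asserting it is a direct application).
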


The above  automorphy test allowed us again to give  a new and simple proof of the following:

Suppose  $\ga$ is a Fuchsian group of the first kind, and that $\g$ is a parabolic element of $\ga$ with a fixed point $x$, and let
 $$L=L(x)=\left(
              \begin{array}{cc}
               0 &-1 \\
               1 & -x \\
              \end{array}
              \right).$$

Let $f \in M_u(\ga,k,p,\nu),~k\neq0$ be an unrestricted automorphic form of weight $k$ and MS $\nu$ which is  holomorphic in $\BH$. If $f$ is meromorphic at the cusp $x$, then
$$
(f|_kL)(z)= e^{2\pi ik_L z/n_L} \sum_{m=N_L}^{\infty} a_m(L)e^{2\pi i mz/n_L},~ N_L\in\BZ\cup\{\infty\}.
$$

When $f$ is a weight two cusp for a finite index subgroup $\ga$ of $\PZ$, $x=\infty$, and $N_L>0$, the authors of \cite{KohMas},  gave a negative answer to the following question:  is it possible that  $a_m(L)=0$ for $m\gg0\,? $ Their method uses some analytic properties of the \textit{Rankin-Selberg zeta function} of $f|_kL$, defined for $\Re(s)>2$ by
$$
R_{(f|_kL)}(s)=\sum_{m=1}^{\infty} \frac{a^2_m(L)}{m^{-s}}.
$$
They also gave another proof using the theory of \textit{vector-valued modular form}.
In our case, the answer is a simple consequence of the above theorem, without the restrictions on the group, the weight, and the MS required in their work.
\begin{thm}
Let $\ga$ be a Fuchsian group of the first kind, and $f \in M_u(\ga,k,p,\nu),~k\neq0,$ be an unrestricted automorphic form of weight $k$ and MS $\nu$. With the above notations, if $a_m(L)=0$ for $m\gg0$, then $f=0$.
\end{thm}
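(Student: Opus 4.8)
The plan is to argue by contradiction: assuming $f\neq 0$, I will show that the vanishing $a_m(L)=0$ for $m\gg 0$ forces the automorphic form $f|_kL$ to have only finitely many non-equivalent critical points in $\BH$, contradicting the critical points theorem proved above. First I would pass to the conjugate group: put $g=f|_kL$ and $\ga^L=L^{-1}\ga L$. Since $L\in\SR$, conjugation by $L$ preserves discreteness and carries the limit set of $\ga$ to that of $\ga^L$; as $\Lambda(\ga)=\Rr$ and $L$ preserves $\Rr$, this gives $\Lambda(\ga^L)=\Rr$, so $\ga^L$ is again a Fuchsian group of the first kind. Moreover $g$ is a nonzero unrestricted automorphic form of weight $k\neq 0$ for $\ga^L$, with some multiplier system $\nu^L$ (the usual computation with the slash operator and the cocycle $j$). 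Crucially, $U_L=L^{-1}\g L\in\ga^L$ is the translation $z\mapsto z+n_L$ with $n_L\neq 0$ (since $\g\neq I$ is parabolic), so $\ga^L$ contains the group $\langle U_L\rangle$ of translations by $n_L\BZ$.

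Next I would exploit the hypothesis. By the standing assumptions ($f$ holomorphic on $\BH$, meromorphic at the cusp $x$) together with $a_m(L)=0$ for $m>M$, the expansion
$$g(z)=e^{2\pi ik_Lz/n_L}\sum_{m=N_L}^{M}a_m(L)\,e^{2\pi imz/n_L}$$
is a \emph{finite} sum with $N_L\in\BZ$. Writing $q=e^{2\pi iz/n_L}\in\BC\setminus\{0\}$ and differentiating,
$$g'(z)=\frac{2\pi i}{n_L}\,e^{2\pi ik_Lz/n_L}\,P(q),\qquad P(q)=\sum_{m=N_L}^{M}(m+k_L)\,a_m(L)\,q^m ,$$
and since $e^{2\pi ik_Lz/n_L}$ never vanishes, the critical points of $g$ in $\BH$ are exactly the $z$ with $P(q)=0$. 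The Laurent polynomial $P$ is not identically zero: if it were, then $g'\equiv 0$, so $g$ would be a nonzero constant, and then the transformation law would force $j_\delta(z)^k$, hence $j_\delta$, to be constant for every $\delta\in\ga^L$; thus every element of $\ga^L$ would fix $\infty$ and $\ga^L$ would be elementary --- impossible for a group of the first kind. Therefore $q^{-N_L}P(q)$ is a nonzero polynomial of degree at most $M-N_L$, so $P$ has at most $M-N_L$ zeros in $\BC\setminus\{0\}$.

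Finally, for each zero $q_j$ of $P$ the set $\{z\in\BH:\ e^{2\pi iz/n_L}=q_j\}$ is a single orbit of $\langle U_L\rangle\leq\ga^L$, hence lies in a single $\ga^L$-orbit; so all critical points of $g$ in $\BH$ lie in a union of at most $M-N_L$ orbits of $\ga^L$, i.e. $g$ has only finitely many non-$\ga^L$-equivalent critical points. This contradicts the critical points theorem above applied to $g\in M_u(\ga^L,k,\nu^L)$ with $k\neq 0$ and $\ga^L$ of the first kind; hence $f=0$. The core computation is short; the step needing care is the reduction --- verifying that $\ga^L$ remains a Fuchsian group of the first kind and that $f|_kL$ is genuinely an unrestricted automorphic form for it (the cocycle and multiplier bookkeeping), and ensuring that the \emph{meromorphic at the cusp $x$} hypothesis really yields $N_L\in\BZ$, so that the expansion of $g$ at $\infty$ is a genuine finite Laurent polynomial rather than a holomorphic function with an essential singularity at $q=0$.
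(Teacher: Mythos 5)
Your proof is correct and takes essentially the same route as the paper's: conjugate the cusp to $\infty$ by $L$, note that the hypothesis turns $f|_kL$ into a finite Laurent expansion in the local uniformizer, and derive a contradiction with the critical-points theorem applied to the first-kind group $L^{-1}\ga L$. If anything, your write-up is more careful than the paper's one-line argument, since you track the extra factor $e^{2\pi i k_L z/n_L}$, check that the Laurent polynomial of the derivative is not identically zero, and count the critical points orbit by orbit under the translation subgroup.
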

\begin{proof}
Suppose the converse is true, then $f|_kL$  will be a nonzero rational function of the local uniformizing parameter  $e^{2\pi iz/n_L}$, hence it cannot  have infinitely many non $L^{-1}\ga L$-equivalent zeros, which is absurd, since $L^{-1}\ga L$ is a Fuchsian group of the first kind, and $f|_kL \in M'(L^{-1}\ga L,k,p,\nu)$.
\end{proof}

\section{Examples}

In this last section, we give some examples involving  standard modular forms and their critical points.

$\bullet$ As a consequence of \eqref{log-div}, we recover some results of \cite{ElbSeb1,Wat}: The Eisenstein series $E_2$ (and hence $\Delta'$) has infinitely many non equivalent zeros in the strip $-1/2<\mbox{Re}(z)\leq /12$. Moreover, all these zeros are simple since $E_2$ and $E'_2$ cannot vanish at the same time because of \eqref{ram1} and the fact that $E_4$ vanishes only at the orbit of the cubic root of unity $\rho$. Similarly, the zeros of $E'_4$ are also simple because of \eqref{ram2} and the fact that $E_6$ vanishes only at the orbit of $i$. Using the same argument and \eqref{ram3}, the zeros of $E'_6$ are all simple.

We know that $E_2$ is real on the axis $\mbox{Re}(z)=0$ and $\mbox{Re}(z)=1/2$, and one can show that it has a unique zero on each axis given
approximately by $i \,0.5235217000$ and $1/2+i\, 0.1309190304$, see \cite{ElbSeb1}. Also, $E'_4$ and $E'_6$ are purely imaginary on both axis and have zeros on $\mbox{Re}(z)=1/2$ given respectively by $1/2+i\,0.4086818600$ and $1/2+i\,0.6341269863$.

$\bullet$ Using \eqref{log-div} and \eqref{eta},  we get the following formulas for the derivative of the theta functions:
$$
\frac{1}{4\pi i}\frac{\theta'_2(z)}{\theta_2(z)}\,=\,4E_2(4z)-E_2(2z),
$$

$$
\frac{24}{\pi i}\frac{\theta'_3(z)}{\theta_3(z)}\,=\,5E_2(2z)-E_2(z)-4E_2(4z),
$$

$$
\frac{1}{2\pi i}\frac{\theta'_4(z)}{\theta_4(z)}\,=\,E_2(z)-E_2(2z).
$$

It is interesting to notice that each of the above combinations with $E_2$, $E_4$ and $E_6$ vanishes
infinitely many times at inequivalent points in a vertical strip in $\mathbb H$.

$\bullet$ Let $F$ be any meromorphic function on $\BC$. then the function
$$
f(z)=j'(z)F'(j(z))+ \frac{i\pi}{6}E_2(z), ~z\in\BH,
$$
is an unrestricted quasi-modular form of weight 2 depth 1 for $\PZ$. Hence it has infinitely many non equivalent zeros in the strip $-1/2<\mbox{Re}(z)\leq 1/2$.
Note that the term
$$
j'(z)F'(j(z)), ~z\in\BH,
$$
corresponds to the change of variable $w=j(z)$, $w\in \BC$, and that even if the function $F(w)$ does not vanish on $\BC$ (e.g. $F(w)=e^{w}$), making this change of variable and performing a perturbation of $F$ by a fixed weight 2 depth 1 quasi-modular form leads to the intriguing fact: there are infinitely many zeros, and most importantly, they are non $\ga-$equivalent,  meaning that they don't come from the change of variable. A phenomenon that deserves to be investigated further.

\end{document}